\documentclass{article}[a4paper]
\usepackage{amsmath,amssymb,amsthm}
\usepackage{enumitem}

\newtheorem{theorem}{Theorem}[section]	
\newtheorem{lemma}[theorem]{Lemma}
\newtheorem{corollary}[theorem]{Corollary}
\newtheorem{proposition}[theorem]{Proposition}

\theoremstyle{definition}

\newtheorem{remark}[theorem]{Remark}
\newtheorem{example}[theorem]{Example}

\newcommand{\imp}{\mathbin{\to}}

\begin{document}
	\title{Terms that define nuclei on residuated lattices: a case study of BL-algebras}
	
	\author{Sebasti\'an Buss, Diego Casta\~no, Jos\'e Patricio D\'iaz Varela}
		
	\maketitle
	
	\begin{abstract}
		A nucleus $\gamma$ on a (bounded commutative integral) residuated lattice $\mathbf{A}$ is a closure operator that satisfies the inequality $\gamma(a) \cdot \gamma(b) \leq \gamma(a \cdot b)$ for all $a,b \in A$. In this article, among several results, a description of an arbitrary nucleus on a  residuated lattice is given. Special attention is given to terms that define a nucleus on every structure of a variety, as a means of generalizing the double negation operation. Some general results about these terms are presented, together with examples. The main result of this article consists of the description of all terms of this kind for every given subvariety of BL-algebras. We exhibit interesting nontrivial examples.
	\end{abstract}
	
	\section*{Introduction}
	
	Residuated lattices provide equivalent algebraic semantics of extensions of what is known in logic as Full Lambek Calculus, $\mathbf{FL}$ for short (see, for example, \cite{GALATOS2007}). In this article, we will work exclusively with varieties of bounded commutative and integral residuated lattices, also known as $\mathbf{FL}_{\mathbf{ew}}$-algebras, where $\mathbf{FL}_{\mathbf{ew}}$ is the extension of $\mathbf{FL}$ obtained by adding two rules known as $\emph{rule of exchange}$ and $\emph{rule of weakening}$. For simplicity, we shall refer to the members of these varieties as residuated lattices. 
	
	Nuclei were originally defined in the context of Brouwerian algebras and quantales~\cite{ROSENTHAL1990,SCHMIDT1977}. The generalized concept of a nucleus on a residuated lattice was introduced by Galatos in \cite{GALATOS2003}. A \emph{nucleus} on a residuated lattice $\mathbf{A} = \langle A, \land, \lor, \cdot, \imp, \bot, \top \rangle$ is a closure operator (on $\mathbf{A}$) $\gamma$ such that $\gamma(a) \cdot \gamma(b) \leq \gamma(a \cdot b)$ for all $a,b \in A$. An important property of nuclei is that their image can be endowed with a residuated lattice structure: the \emph{nuclear image} of $\mathbf{A}$ with respect to $\gamma$ is the residuated lattice $\mathbf{A}_\gamma := \langle \gamma(A), \land, \lor_\gamma, \cdot_\gamma, \imp, \gamma(\bot), \top \rangle$ where $a \square_{\gamma} b = \gamma(a \square b)$ for $\square \in \lbrace \cdot, \lor \rbrace$ and all $a,b \in \gamma(A)$. The definition of a nucleus can naturally be extended to a broader class of structures, namely, the class of partially ordered monoids. In this context nuclei have been used to derive representation results for different kinds of algebras with an underlying structure of a partially ordered monoid (see, for example, \cite{MUNDICI1986,GALATOS2005,PRENOSIL2022}). More results about nuclei can be found in \cite{GALATOS2007,HAN2011}. Nuclei on residuated lattices were also studied in the context of generalized Bosbach and Rie\v{c}an states \cite{ZHAO2012,ZHAO2013}.
	
	In this work we focus on the study of nuclei on residuated lattices, with special emphasis on divisible prelinear residuated lattices, also known as BL-algebras. In Section~\ref{sec:general_results} we present some general properties. Among some basic results, a description of an arbitrary nucleus on a residuated lattice is given (Theorem~\ref{nucleilocaldescription}). Our main interest lies in nuclei defined by terms. A well-known example of this is the nucleus given by the term $\neg\neg x := (x \imp \bot) \imp \bot$, usually known in the literature as the double negation. For each residuated lattice $\mathbf{A}$, the interpretation $(\neg\neg x)^\mathbf{A}$ is a nucleus on $\mathbf{A}$. This nucleus has been extensively studied in the context of residuated structures and logic (see, for example, \cite{CASTAÑO2011,CASTAÑO2015, CIGNOLI2004, GALATOS2007,GLIVENKO1928}). Based on a definition given in \cite{GALATOS2007}, a term $t(x)$ in the language of residuated lattices is a \emph{nucleus on a variety} $\mathcal{V}$ of residuated lattices if $t^\mathbf{A}$ is a nucleus on $\mathbf{A}$ for all $\mathbf{A} \in \mathcal{V}$. The terms $x, \neg\neg x$ and $\top$ are nuclei on every variety of residuated lattices and will be called \emph{trivial nuclei}. A natural question that arises is if there are varieties $\mathcal{V}$ and terms $t(x)$ such that $t(x)$ is a nucleus on $\mathcal{V}$ and not equivalent (on $\mathcal{V}$) to a trivial nucleus. A term with such properties will be called $\emph{nontrivial nucleus}$ on $\mathcal{V}$. In Section~\ref{sec:nuclei_terms} we present general results about nuclei given by terms and, among other results, examples of varieties with nontrivial nuclei are given (Example~\ref{nontrivial_nucleus_example}). In the case of varieties of BL-algebras, a detailed description of nuclei given by terms is possible. This is the main subject of Section~\ref{sec:nuclei_on_BL}, where a complete description of all nuclei on an arbitrary variety $\mathcal{V}$ of BL-algebras is given (Theorem~\ref{BL_nuclei}). In some cases, nontrivial nuclei exist. A syntactic description of these terms is given.
	
	\section{Preliminaries}
	
	In this section we provide basic definitions that are going to be used throughout this paper, together with some known results. For notation and concepts of universal algebra used in this article we refer the reader to \cite{BURRIS1981}.
	
	A \emph{residuated lattice} \cite{WARD1990} $\mathbf{A} = \langle A, \land, \lor, \cdot, \imp, \bot, \top \rangle$ is an algebra consisting of four binary operations $\lbrace \land, \lor, \cdot, \imp \rbrace$ and two constants $\lbrace \bot, \top \rbrace$ such that 
	\begin{enumerate}
		\item[$\bullet$] $\langle A, \land, \lor, \bot, \top \rangle$ is a bounded lattice,
		\item[$\bullet$] $\langle A, \cdot, \top \rangle$ is a commutative monoid,
		\item[$\bullet$] $a \cdot b \leq c$ if and only if $a \leq b \imp c$, for all $a,b,c \in A$.
	\end{enumerate}
	
	The class of all residuated lattices is a variety algebras \cite{HART2002} that will be denoted by $\mathcal{RL}$. As usual, we define $\neg a:= a \imp \bot$, $a^0 := \top$ and $a^{n+1} := a^n \cdot a$ for all $a \in A$, $n \in \omega$. In the following result we list some basic algebraic properties of residuated lattices (see, for example, \cite{GALATOS2007}).
	\begin{proposition}\label{residuated_lattices_properties}
		Let $\mathbf{A}$ be a residuated lattice and $a,b,c \in A$. Then:
		\begin{enumerate}[label=\rm{(\roman*)}]
			\item $\top \imp a = a$,
			\item $a \leq b$ if and only if $a \imp b = \top$,
			\item if $a \leq b$, then $a \cdot c \leq b \cdot c$, $b \imp c \leq a \imp c$ and $c \imp a \leq c \imp b$,
			\item $a \cdot b \leq a \land b$,
			\item $b \leq a \imp (a \cdot b) \leq a \imp b$,
			\item $a \imp (b \imp c) = (a \cdot b) \imp c = b \imp (a \imp c)$,
			\item $a \cdot (a \imp b) \leq a \land b$,
			\item $a \lor b \leq (a \imp b) \imp b$,
			\item $((a \imp b) \imp b) \imp b = a \imp b$,
			\item if $B \subseteq A$ and $\bigvee B$ exists, then $a \cdot \bigvee B = \bigvee \lbrace a \cdot b: b \in B \rbrace$,
			\item if $B \subseteq A$ and $\bigvee B$ exists, then $\bigvee B \imp a = \bigwedge \lbrace b \imp a: b \in B \rbrace$.
			\item if $B \subseteq A$, $\bigwedge B$ exists, then $a \imp \bigwedge B = \bigwedge \lbrace a \imp b: b \in B \rbrace$.
		\end{enumerate}
	\end{proposition}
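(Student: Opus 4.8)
The entire proposition rests on the residuation adjunction $a \cdot b \leq c \iff a \leq b \imp c$ (which, by commutativity of $\cdot$, also equals $b \leq a \imp c$), so the plan is to reduce each item to a statement about products and then discharge it using the monoid laws and the boundedness of the lattice. I would establish (i), (ii) and the monotonicity package (iii) first, since the later items lean on them. For (i), note that $\top \imp a$ is by adjunction the largest $x$ with $\top \cdot x = x \leq a$, forcing $\top \imp a = a$; (ii) is immediate from the adjunction applied to $\top \cdot a = a \leq b$. Within (iii) I would prove monotonicity of $\cdot$ first, from $b \cdot c \leq b \cdot c$ and $a \leq b \leq c \imp (b \cdot c)$, and then deduce the two antitonicity/monotonicity statements for $\imp$ by a single adjunction step, e.g.\ $(b \imp c)\cdot a \leq (b\imp c)\cdot b \leq c$ yields antitonicity of $\imp$ in its first argument.

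The second group, (iv)--(ix), are the arithmetic identities, and I would order them so each can cite its predecessors. Property (iv) follows from $b \leq \top$ and monotonicity of $\cdot$; (v) and (vii) then drop out by adjunction together with (iv); (vi) is the currying/exchange identity, proved in one line by chasing $x \leq a \imp (b \imp c) \iff x \cdot a \cdot b \leq c \iff x \leq (a\cdot b)\imp c$ and using commutativity for the third expression. Item (viii) reduces, via the join being a least upper bound, to the two inequalities $a \cdot (a \imp b) \leq b$ and $b \cdot (a \imp b) \leq b$, which are exactly (vii) and (iv). For (ix) I set $d := a \imp b$; the inequality $d \leq (d \imp b)\imp b$ is an instance of (viii), while the reverse uses (viii) again in the form $a \leq (a \imp b)\imp b = d \imp b$, whence (iii) gives $(d\imp b)\imp b \leq a \imp b = d$.

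The last group, (x)--(xii), concerns the interaction of the operations with arbitrary existing joins and meets, and here the argument is the standard computation that a left adjoint preserves joins while a right adjoint preserves meets: to show a candidate element is a least upper bound (resp.\ greatest lower bound) I verify it is an upper (resp.\ lower) bound and then push an arbitrary competing bound through the adjunction. For (x), monotonicity gives that $a \cdot \bigvee B$ bounds $\{a\cdot b : b \in B\}$ above, and if $u$ also bounds that set then $b \leq a \imp u$ for all $b$, so $\bigvee B \leq a \imp u$ and $a \cdot \bigvee B \leq u$; items (xi) and (xii) are the mirror-image computations for $\imp$, using that $\imp$ reverses order in its left argument and preserves it in its right. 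I do not anticipate a genuine obstacle, since every item is a one- or two-step adjunction chase; the only points requiring care are bookkeeping the direction of the inequalities in (iii), (ix) and (xi), where $\imp$ is order-reversing in its left argument, and ensuring in (x)--(xii) that the relevant join or meet is assumed to exist before it is manipulated.
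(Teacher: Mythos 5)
Your proof is correct, and in fact the paper offers no proof of its own for this proposition—it simply cites the standard reference (Galatos--Jipsen--Kowalski--Ono)—so your adjunction-chase argument is precisely the canonical one the citation points to: each item reduces to the residuation law $a \cdot b \leq c \iff a \leq b \imp c$ together with the monoid unit laws, in the order you propose. Every step checks out, including the two points that typically need care and that you handle explicitly: concluding the equalities in (vi) and (ix) from two-sided inequalities, and establishing in (x)--(xii) that the join or meet on the right-hand side \emph{exists} by verifying that the candidate element is itself the least upper (respectively greatest lower) bound rather than merely comparing two joins assumed to exist.
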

	
	Given a residuated lattice $\mathbf{A}$ and $B$ a subset of $A$ we will use $\langle B \rangle^\mathbf{A}$ to denote the subuniverse generated by $B$. For simplicity, the subalgebra associated to the generated subuniverse, the generated subalgebra, will also be denoted $\langle B \rangle^\mathbf{A}$. If $B = \lbrace b_1, \dots,b_n \rbrace$ is finite, then we will use $\langle b_1, \dots,b_m \rangle^\mathbf{A}$ to denote the subuniverse generated by $B$.
	
	A nonempty subset $F$ of $A$ is an \emph{implicative filter on $\mathbf{A}$} if, for all $a,b \in A$, $a \leq b$ and $a \in F$ implies $b \in F$, and $a, b \in F$ implies $a \cdot b \in F$. Equivalently, a subset $F$ of $A$ is an implicative filter if and only if $\top \in F$ and, for all $a,b \in A$, $a,a \imp b \in F$ implies $b \in F$. It is well-known that the lattice of filters on $\mathbf{A}$ is isomorphic to the lattice of congurences on $\mathbf{A}$ (see \cite{GALATOS2007}). 
	
	A residuated lattice $\mathbf{A} = \langle A, \land, \lor, \cdot, \imp, \bot, \top \rangle$ is a \emph{BL-algebra} \cite{HAJEK1998} if it satisfies the two following identities:
	\begin{equation*}
		\begin{aligned}
			x \land y & \approx x \cdot (x \imp y)  & \text{(divisibility)}, \\
			(x \imp y) & \lor (y \imp x) \approx \top  & \text{(prelinearity)}. 
		\end{aligned}
	\end{equation*}
	
	The variety of BL-algebras will be denoted by $\mathcal{BL}$. The finitely subdirectly irreducible members of $\mathcal{BL}$ are none other than the totally ordered BL-algebras, \emph{BL-chains} for short. Given a variety $\mathcal{V}$ of BL-algebras, let $\mathcal{V}_{\textnormal{fsi}}$ denote the class of all BL-chains in $\mathcal{V}$. Since every subdirectly irreducible member of $\mathcal{V}$ is also a member of $\mathcal{V}_{\textnormal{fsi}}$, it follows that $\mathcal{V}_{\textnormal{fsi}}$ generates $\mathcal{V}$ as a variety.
	
	For every BL-algebra $\mathbf{A}$ the identity
	\begin{equation*}
		x \lor y \approx ((x \imp y) \imp y) \land ((y \imp x) \imp x) 
	\end{equation*}
	holds. Since $\land$ can be defined using the operations $\cdot$ and $\imp$, $\mathbf{A}$ is completely determined by its $\lbrace \cdot, \imp, \bot, \top \rbrace$-reduct, that is, $\langle A, \cdot, \imp, \bot, \top \rangle$. A residuated lattice $\mathbf{A} = \langle A, \land, \lor, \cdot, \imp, \bot, \top \rangle$ is said to be an \emph{MV-algebra} \cite{HAJEK1998} if it is a BL-algebra such that the identity 
	\begin{equation*}
		x \approx \neg\neg x \ \text{(involution)}
	\end{equation*}
	holds. 
	
	A \emph{hoop} \cite{FERREIRIM1992} is an algebra $\mathbf{A} = \langle A, \cdot, \imp, \top \rangle$ such that $\langle A, \cdot, \top \rangle$ is a commutative monoid and the following identities hold:
	\begin{equation*}
		\begin{aligned}
			x \imp x & \approx \top, \\
			x \cdot(x \imp y) & \approx y \cdot(y \imp x), \\
			x \imp (y \imp x) & \approx (x \cdot y) \imp z.
		\end{aligned}
	\end{equation*}
	
	If we define the relation $x \leq y$ if and only if $x \imp y = \top$, then $\leq$ defines a semilattice order on $\mathbf{A}$, with $x \land y = x \cdot (x \imp y)$ and last element $\top$. Moreover, $a \cdot b \leq c$ if and only if $a \leq b \imp c$ for all $a,b,c \in A$.
	
	When the order of a hoop $\mathbf{A}$ is total we say that $\mathbf{A}$ is a \emph{totally ordered hoop}. A \emph{Wajsberg hoop} is a hoop that satisfies
	\begin{equation*}\label{tanaka}
		(x \imp y) \imp y \approx y \imp (y \imp x).
	\end{equation*}
	
	A \emph{bounded hoop} is an algebra $\mathbf{A} = \langle A, \cdot, \imp, \bot, \top \rangle$ such that $\langle A, \cdot, \imp, \top \rangle$ is a hoop and $\bot \leq a$ for each $a \in A$. If $\mathbf{A} = \langle A, \cdot, \imp, \top \rangle$ is a hoop with first element $\bot$, then we define the bounded hoop $\mathbf{A}^+ := \langle A, \cdot, \imp, \bot, \top \rangle$. A \emph{Wajsberg algebra} is a bounded Wajsberg hoop. A hoop $\mathbf{A}$ is \emph{cancellative} if it satisfies the identity
	\begin{equation*}
		x \imp (x \cdot y) \approx y.
	\end{equation*}
	
	A \emph{basic hoop} \cite{AGLIANO2007} is a hoop which satisfies the identity
	\begin{equation*}
		((x \imp y) \imp z) \imp (((y \imp x) \imp z) \imp z) \approx \top.
	\end{equation*}
	
	Clearly, every totally ordered hoop is basic. We continue with some examples that are going to be used in this paper. 
	
	\begin{example}
		
		Consider $S_n = \lbrace a^i: i \in \lbrace 0, \dots,n \rbrace \rbrace$ with the order
		\begin{equation*}
			\bot = a^n < \dots < a < a^0 = \top
		\end{equation*}
		and the operations $a^i \cdot a^j := a^{\min\lbrace i+j,n \rbrace}$ and $a^i \imp a^j := a^{\max\lbrace j-i,0 \rbrace}$ for all $i,j \in \lbrace 0, \dots,n \rbrace$. Then, $\mathbf{S}_n := \langle S_n, \cdot, \imp, \top \rangle$ is a totally ordered Wajsberg hoop with first element $a^n$. Therefore, $\mathbf{S}_n^+ = \langle S_n, \cdot, \imp, a^n, \top \rangle$ is a bounded totally ordered Wajsberg hoop. 
		
		Next consider the set $S_\omega = \lbrace a^i: i \in \omega \rbrace$ with the total order $a^{i} \leq a^j$ iff $i \geq j$ and the operations $a^i \cdot a^j := a^{i+j} \quad \text{and} \quad a^i \imp a^j := a^{\max\lbrace j-i,0 \rbrace}$. The structure $\mathbf{S}_\omega := \langle S_\omega, \cdot, \imp, \top \rangle$ is a cancelative totally ordered Wajsberg hoop. 
		
		Finally, consider $A = \lbrace (0,k): k \in \omega \rbrace \cup \lbrace (1,-k): k \in \omega \rbrace$. If we define 
		\begin{gather*} 
			\bot := (0,0), 
			\quad
			\top := (1,0), 
		\end{gather*}
		\begin{equation*}
			a \cdot b := 
			\begin{cases}
				(0,0) & \text{if $a = (0,j)$, $b = (0,k)$, $j,k \geq 0$;} \\
				(1,-(j+k)) & \text{if $a = (1,-j)$, $b = (1,-k)$, $j,k \geq 0$;} \\
				(0,0) & \text{if $a = (0,j)$, $b = (1,-k)$, $j,k \geq 0$, $k > j$;} \\
				(0,j-k) & \text{if $a = (0,j)$, $b = (1,-k)$, $j,k \geq 0$, $k \leq j$},
			\end{cases}
		\end{equation*}
		\begin{equation*}
			a \imp b := 
			\begin{cases}
				(1,0) & \text{if $a \leq b$;} \\
				(1,k-j) & \text{if $a = (0,j)$, $b = (0,k)$, $j,k \geq 0$, $j > k$;} \\
				(0,j+k) & \text{if $a = (1,-j)$, $b = (0,k)$, $j,k \geq 0$;} \\
				(1,j-k) & \text{if $a = (1,-j)$, $b = (1,-k)$, $j,k \geq 0$, $j < k$}
			\end{cases}
		\end{equation*}
		for all $a,b \in A$, then the algebra $\mathbf{S}_1^\omega := \langle A, \cdot, \imp, \top \rangle$ is a totally ordered Wajsberg hoop. Moreover, $\bot$ is the first element of this structure and $(\mathbf{S}_1^\omega)^+$ is a bounded hoop.
		
	\end{example}
	
	Bounded basic hoops are term equivalent to BL-algebras \cite[Theorem~1.6]{AGLIANO2007}, same as Wajsberg algebras to MV-algebras.
	\begin{theorem}\label{BL-basic_term_equivalence}
		If $\langle A, \land, \lor, \cdot, \imp, \bot, \top \rangle$ is a BL-algebra, then $\langle A, \cdot, \imp, \bot, \top \rangle$ is a bounded basic hoop. Moreover, if $\langle B, \cdot, \imp, \bot, \top \rangle$ is a bounded basic hoop, then $\langle B, \land, \lor, \cdot, \imp, \bot, \top \rangle$ is a BL-algebra, where
		\begin{equation*}
			x \land y := x\cdot(x \imp y) \quad \text{and} \quad x \lor y := ((x \imp y) \imp y) \land ((y \imp x) \imp x).
		\end{equation*}
		
		This correspondence establishes a term equivalence between BL-algebras and bounded basic hoops. The same correspondence also establishes a term equivalence between MV-algebras and Wajsberg algebras.
	\end{theorem}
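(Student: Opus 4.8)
The plan is to establish the two constructions separately, verify that they are mutually inverse, and finally specialize to the MV/Wajsberg case.

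First I would treat the forward direction. Let $\mathbf{A}$ be a BL-algebra. To see that $\langle A, \cdot, \imp, \bot, \top \rangle$ is a bounded hoop, note that $\langle A, \cdot, \top \rangle$ is a commutative monoid by definition; $x \imp x = \top$ follows from Proposition~\ref{residuated_lattices_properties}(ii); the identity $x \imp (y \imp z) = (x \cdot y) \imp z$ is Proposition~\ref{residuated_lattices_properties}(vi); and $x \cdot (x \imp y) = x \land y = y \land x = y \cdot (y \imp x)$ by divisibility together with the commutativity of $\land$. Boundedness is immediate since $\bot \leq a$ for all $a$. For basicness, I would derive the identity $((x \imp y) \imp z) \imp (((y \imp x) \imp z) \imp z) \approx \top$ from prelinearity: writing $u = x \imp y$ and $v = y \imp x$, prelinearity gives $u \lor v = \top$, so by Proposition~\ref{residuated_lattices_properties}(xi) and (i) we get $(u \imp z) \land (v \imp z) = (u \lor v) \imp z = z$; then $(u \imp z) \cdot (v \imp z) \leq (u \imp z) \land (v \imp z) = z$ by Proposition~\ref{residuated_lattices_properties}(iv), and residuation followed by (ii) yields the basic identity.

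Next I would treat the reverse direction, where the main obstacle lies. Let $\mathbf{B}$ be a bounded basic hoop with $\land$ and $\lor$ defined as stated. Divisibility holds by the very definition of $\land$, residuation already holds in any hoop, and $\top$, $\bot$ are top and bottom. I would first record that $\land$ is precisely the meet of the hoop's semilattice order, and that both $(x \imp y) \imp y$ and $(y \imp x) \imp x$ lie above $x$ and $y$ (using integrality, i.e. $a \cdot b \leq b$), so $x \lor y$ is an upper bound of $\{x,y\}$. The technical heart is to show that $x \lor y$ is the \emph{least} upper bound, equivalently that $\land$ and $\lor$ satisfy the absorption laws and thus form a lattice; this is exactly where the basic hoop identity is essential, and I expect to follow the equational development of basic hoops in \cite{AGLIANO2007}. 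Prelinearity then follows cleanly: with $u = x \imp y$ and $v = y \imp x$, instantiating the basic identity at $z := v$ gives $(u \imp v) \imp v = \top$, and at $z := u$ gives $(v \imp u) \imp u = \top$, so that $(x \imp y) \lor (y \imp x) = \top \land \top = \top$.

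To obtain the term equivalence, I would check that the two translations compose to the identity. Passing from a bounded basic hoop to a BL-algebra and back is trivial, since $\land$ and $\lor$ do not belong to the hoop signature. Conversely, starting from a BL-algebra, discarding $\land$ and $\lor$ and then redefining them by the displayed terms recovers the originals: $x \cdot (x \imp y) = x \land y$ is divisibility, while $((x \imp y) \imp y) \land ((y \imp x) \imp x) = x \lor y$ is the identity recorded just before the statement. Hence each fundamental operation of one signature is a term of the other and the translations are mutually inverse, which is the definition of term equivalence. Finally, for the MV/Wajsberg case I would show that a BL-algebra is involutive if and only if its hoop reduct is a Wajsberg hoop: substituting $\bot$ for $y$ in the Wajsberg identity and simplifying via $\bot \imp x = \top$ and $\top \imp x = x$ produces $\neg\neg x = x$, while conversely involution forces $(x \imp y) \imp y = x \lor y = (y \imp x) \imp x$; thus the term equivalence restricts to MV-algebras and Wajsberg algebras.
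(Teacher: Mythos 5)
Your proposal is correct in outline, but note that the paper does not prove this theorem at all: it imports it wholesale from Aglian\'o--Ferreirim--Montagna \cite[Theorem~1.6]{AGLIANO2007}, so what you have written is a reconstruction rather than an alternative to an in-paper argument. Your forward direction is complete and correct: the monoid and residuation axioms, divisibility giving $x \cdot (x \imp y) = x \land y = y \cdot (y \imp x)$, and the derivation of the basic identity from prelinearity via $(u \lor v) \imp z = (u \imp z) \land (v \imp z)$ and residuation are all sound applications of Proposition~\ref{residuated_lattices_properties}. Your derivation of prelinearity from the basic identity (instantiating $z := v$ and $z := u$) is also correct, and your two typo corrections are right: the paper's third hoop axiom should read $x \imp (y \imp z) \approx (x \cdot y) \imp z$, and the Wajsberg identity should be Tanaka's $(x \imp y) \imp y \approx (y \imp x) \imp x$, which is the form your $y := \bot$ substitution uses. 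The one step you do not actually carry out is the technical heart you yourself flag: that in a basic hoop the term $((x \imp y) \imp y) \land ((y \imp x) \imp x)$ is the \emph{least} upper bound (equivalently, that absorption holds), which you defer to the equational development in \cite{AGLIANO2007} --- defensible here, since that is precisely the source the paper cites for the whole theorem, but it is the nontrivial content and a self-contained proof would have to supply it. A second, smaller omission of the same kind: in the Wajsberg-to-MV direction you apply the bounded-basic-hoop construction to a bounded Wajsberg hoop, which tacitly uses the known fact that every Wajsberg hoop is basic; one sentence acknowledging (or deriving) this would close the argument. With those two borrowings made explicit, your proof is a faithful and correctly organized expansion of the cited result.
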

	
	In view of the last theorem, we are not going to distinguish between bounded basic hoops and BL-algebras, same as between MV-algebras and Wajsberg algebras. The respective definitions will be considered equivalent and, if necessary, the correspondence given by the previous theorem will be used.
	
	Let $\langle I, \leq \rangle$ be a total order. For each $i \in I$ let $\mathbf{A}_i = \langle A_i, \cdot_i, \imp_i, \top \rangle$ be a hoop such that $A_i \cap A_j = \lbrace \top \rbrace$ for all $j \in I$, $i \neq j$. The \emph{ordinal sum} is the hoop $\bigoplus_{i \in I} \mathbf{A}_i := \langle \cup_{i \in I} A_i, \cdot, \imp, \top \rangle$ where the operations are defined as follows:
	\begin{equation*}
		x \cdot y :=
		\begin{cases}
			x \cdot_i y & \text{if $x,y \in A_i$}; \\
			x & \text{if $x \in A_i \backslash \lbrace \top \rbrace$, $y \in A_j$, $i < j$}; \\
			y & \text{if $y \in A_i \backslash \lbrace \top \rbrace$, $x \in A_j$, $i < j$}.
		\end{cases}
	\end{equation*}
	and
	\begin{equation*}
		x \imp y :=
		\begin{cases}
			\top & \text{if $x \in A_i \backslash \lbrace \top \rbrace$, $y \in A_j$, $i < j$}; \\
			x \imp_i y & \text{if $x,y \in A_i$}; \\
			y & \text{if $y \in A_i$, $x \in A_j$, $i < j$}.
		\end{cases}
	\end{equation*}
	
	Each $\mathbf{A}_i$ will be called \emph{component} of the ordinal sum. If $I$ has first element $0$ and $\mathbf{A}_0$ is a hoop with first element $\bot$, then $\bot$ is the first element of $\bigoplus_{i \in I} \mathbf{A}_i$. In this case, $(\bigoplus_{i \in I} \mathbf{A}_i)^+$ is a bounded hoop. The subalgebras of an ordinal sum can be described easily \cite[Proposition~3.1]{AGLIANO2003}.
	\begin{proposition}\label{subhoops_of_ordinal_sum}
		Let $\langle I, \leq \rangle$ be a total order and consider $\lbrace \mathbf{A}_i: i \in I \rbrace$ a set of hoops such that $A_i \cap A_j = \lbrace \top \rbrace$ for all $i \neq j$. The subalgebras of $\bigoplus_{i \in I} \mathbf{A}_i$ are exactly the structures of the form $\bigoplus_{j \in J} \mathbf{B}_j$, where $\langle J, \leq \rangle$ is a subposet of $\langle I, \leq\rangle$ and $\mathbf{B}_j$ is a subalgebra of $\mathbf{A}_j$ for all $j \in J$. If $I$ has first element $0$ and $\mathbf{A}_0$ has first element $\bot$, then the subalgebras of $(\bigoplus_{i \in I} \mathbf{A}_i)^+$ are exactly the structures of the form $(\bigoplus_{j \in J} \mathbf{B}_j)^+$ where $\langle J, \leq \rangle$ is a subposet of $\langle I, \leq\rangle$, $\mathbf{B}_j$ is a subalgebra of $\mathbf{A}_j$ for all $j \in J$, $0 \in J$ and $\bot \in B_0$.
	\end{proposition}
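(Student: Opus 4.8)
The plan is to prove the two inclusions separately, treating the unbounded case first and then reducing the bounded case to it. Throughout I will lean on the single most important structural fact about the ordinal sum: every element of $\bigoplus_{i\in I}\mathbf{A}_i$ other than $\top$ lies in exactly one $A_i$, so the non-top part of the universe is partitioned by the family $\{A_i\setminus\{\top\}\}_{i\in I}$, and each defining case of $\cdot$ and $\imp$ is governed solely by which components the two arguments lie in and by the comparison of the corresponding indices in $\langle I,\leq\rangle$.

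For the easy inclusion, fix a subposet $\langle J,\leq\rangle$ of $\langle I,\leq\rangle$ and subhoops $\mathbf{B}_j\leq\mathbf{A}_j$ for $j\in J$, and set $S:=\bigcup_{j\in J}B_j$. First I would check that $S$ is a subuniverse of $\bigoplus_{i\in I}\mathbf{A}_i$. For $x,y$ in the same $B_j$ the values $x\cdot y=x\cdot_j y$ and $x\imp y=x\imp_j y$ stay in $B_j$ since $\mathbf{B}_j$ is a subhoop; for $x,y$ in distinct components every value of $\cdot$ and $\imp$ prescribed by the ordinal-sum definition is either $\top$ or one of $x,y$, hence again lies in $S$. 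The only point requiring a line of care is that these restricted operations are not merely closed but actually coincide with the operations of $\bigoplus_{j\in J}\mathbf{B}_j$: this holds because the order that $J$ inherits from $I$ is the order used in $\bigoplus_{j\in J}\mathbf{B}_j$, and the within-component operations are the restrictions of $\cdot_j,\imp_j$ to $B_j$. Thus $\bigoplus_{j\in J}\mathbf{B}_j$ is a subalgebra.

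For the converse, let $\mathbf{S}$ be a subalgebra with universe $S$, and put $B_i:=S\cap A_i$ and $J:=\{i\in I: B_i\neq\{\top\}\}$. Each $B_i$ contains $\top$, and for $x,y\in B_i$ the ordinal-sum values $x\cdot_i y$ and $x\imp_i y$ lie in $A_i$ by the first defining case and in $S$ by closure of $S$, hence in $B_i$; therefore $\mathbf{B}_i:=\langle B_i,\cdot_i,\imp_i,\top\rangle$ is a subhoop of $\mathbf{A}_i$. Using the partition fact, $S=\bigcup_{i\in I}(S\cap A_i)=\bigcup_{i\in I}B_i=\bigcup_{j\in J}B_j$, the last equality because each discarded component contributes only $\top\in B_j$. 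Finally the operations of $\mathbf{S}$ are the restrictions of those of the big ordinal sum and, exactly as in the previous paragraph, these restrictions are the operations of $\bigoplus_{j\in J}\mathbf{B}_j$; hence $\mathbf{S}=\bigoplus_{j\in J}\mathbf{B}_j$.

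For the bounded statement I would run the same argument on the hoop reducts and merely track the extra constant $\bot$. Since $\bot$ now belongs to the signature, every subalgebra contains $\bot\in A_0$, so $\bot\in B_0$ and $0\in J$; conversely, requiring $0\in J$ and $\bot\in B_0$ guarantees that $\bot$ is present and remains the least element of $(\bigoplus_{j\in J}\mathbf{B}_j)^+$. I expect no serious obstacle here: the content is entirely the bookkeeping of the cross-component cases, and the one genuinely substantive check—used in both directions—is the claim that closure of a union of subhoops under the ambient operations already forces those operations to \emph{agree} with the ordinal-sum operations of the components, rather than merely to land inside the union.
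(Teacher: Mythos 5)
Your proof is correct. There is nothing internal to compare it against: the paper does not prove this proposition but quotes it from Agliano and Montagna \cite[Proposition~3.1]{AGLIANO2003}, and your two-inclusion verification is the standard direct argument for that result. You correctly isolate the one substantive point, namely that closure of the union $\bigcup_{j \in J} B_j$ under the ambient operations is not enough by itself --- one must also check that the restricted operations \emph{coincide} with those of $\bigoplus_{j \in J} \mathbf{B}_j$ --- and your justification is right: a non-top element lies in exactly one component, $B_j \subseteq A_j$, and the case analysis in both ordinal sums is driven only by component membership and index comparison, which agree since $J$ carries the order inherited from $I$. Two pieces of bookkeeping deserve a line if you write this up formally. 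First, the degenerate subalgebra $S = \lbrace \top \rbrace$: with your definition $J = \lbrace i : B_i \neq \lbrace \top \rbrace \rbrace$ you get $J = \emptyset$ and $\bigcup_{j \in J} B_j = \emptyset$, so this case must be covered either by a convention for the empty ordinal sum or by representing $S$ as $\bigoplus_{j \in \lbrace j_0 \rbrace} \mathbf{B}_{j_0}$ with $\mathbf{B}_{j_0}$ the trivial subhoop of some $\mathbf{A}_{j_0}$. Second, in the bounded case your inference ``$\bot \in B_0$ hence $0 \in J$'' silently uses $\bot \neq \top$, i.e., that $\mathbf{A}_0$ is nontrivial; this is harmless but worth noting. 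Neither point is a genuine gap.
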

	
	Ordinal sums play a key role in the description of totally ordered hoops and, in particular, BL-chains. The following decomposition is a well-known result in the theory of hoops. A proof of this fact may be found in \cite{BUSANICHE2005}.
	\begin{theorem}\label{BLchains_decomposition}
		For each nontrivial BL-chain $\mathbf{A}$ there exists a total order $\langle I, \leq~\rangle$ with first element $0$ and a set of nontrivial totally ordered Wajsberg hoops $\lbrace \mathbf{A}_i: i \in I \rbrace$ such that $A_i \cap A_j = \lbrace \top \rbrace$ for all $i \neq j$, $\mathbf{A}_0$ has first element $\bot^\mathbf{A}$ and $\mathbf{A} = (\bigoplus_{i \in I} \mathbf{A}_i)^+$. Conversely, if $\langle I, \leq \rangle$ is a total order with first element $0$ and $\lbrace \mathbf{A}_i: i \in I \rbrace$ is a set of nontrivial totally ordered Wajsberg hoops such that $A_i \cap A_j = \lbrace \top \rbrace$ for all $i \neq j$ and $\mathbf{A}_0$ has first element, then $(\bigoplus_{i \in I} \mathbf{A}_i)^+$ is a nontrivial BL-chain.
	\end{theorem}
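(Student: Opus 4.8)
The statement is a known structural decomposition, so the plan is to reduce everything to the hoop reducts via Theorem~\ref{BL-basic_term_equivalence} and then treat the two directions separately; the converse is essentially a verification, while the forward direction requires carving $\mathbf{A}$ into its maximal Wajsberg blocks.

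For the converse, suppose $\langle I, \leq \rangle$ and $\lbrace \mathbf{A}_i : i \in I \rbrace$ are given. First I would check directly from the displayed definitions of $\cdot$ and $\imp$ on $\bigoplus_{i \in I} \mathbf{A}_i$ that the three hoop identities hold, by a case analysis on whether the arguments lie in one component or in several: inside a single $\mathbf{A}_i$ they hold because $\mathbf{A}_i$ is a hoop, and across distinct components they reduce to the absorption rules $x \cdot y = x$, $x \imp y = \top$, $y \imp x = x$ (for $x$ in the lower component), which are immediate from the definitions. Next I would note that the induced order, $u \leq v$ iff $u \imp v = \top$, puts every element of a lower component strictly below every nontop element of a higher one, so the orders of $I$ and of the $\mathbf{A}_i$ amalgamate into a total order; hence $\bigoplus_{i \in I} \mathbf{A}_i$ is a totally ordered hoop and therefore basic. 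Since $\mathbf{A}_0$ has a first element, that element bounds the whole sum from below, so $(\bigoplus_{i \in I} \mathbf{A}_i)^+$ is a bounded basic hoop, which is a BL-algebra by Theorem~\ref{BL-basic_term_equivalence}; being totally ordered and having all components nontrivial, it is a nontrivial BL-chain.

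For the forward direction, I would pass again through Theorem~\ref{BL-basic_term_equivalence} to regard the BL-chain $\mathbf{A}$ as a bounded basic hoop with totally ordered $\lbrace \cdot, \imp, \top \rbrace$-reduct. On $A \setminus \lbrace \top \rbrace$ I would introduce the relation that records an ordinal \emph{jump}: set $a \prec b$ when $a < b$ and $b \imp a = a$, so that $b$ acts from above as an identity on $a$, exactly as happens across components of an ordinal sum. I would then define $x \approx y$ to mean that there is no jump between $x$ and $y$, prove that $\approx$ is an equivalence relation whose classes $\lbrace C_i : i \in I \rbrace$ are linearly ordered by the order of $\mathbf{A}$, and set $A_i := C_i \cup \lbrace \top \rbrace$, letting $0$ index the class of $\bot$, so that $\mathbf{A}_0$ has first element $\bot^{\mathbf{A}}$ and $A_i \cap A_j = \lbrace \top \rbrace$ for $i \neq j$.

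Two things then remain. First, that the operations of $\mathbf{A}$ obey the ordinal-sum rules across classes: if $a \prec b$, then divisibility gives $a = a \wedge b = b \cdot (b \imp a) = b \cdot a$, so $a \cdot b = a$, while $a \imp b = \top$ merely records $a \leq b$; this matches $\mathbf{A}$ with the ordinal-sum operations. Second, that each $\mathbf{A}_i$ is a totally ordered Wajsberg hoop, which amounts to showing that a totally ordered basic hoop with no internal jump satisfies $(x \imp y) \imp y \approx (y \imp x) \imp x$. I expect this second point, together with the transitivity of $\approx$, to be the main obstacle, since it is precisely where the full strength of divisibility and prelinearity (the basic-hoop identity) is needed to guarantee that the jump-free classes are genuinely Wajsberg; once these are in hand, the equality $\mathbf{A} = (\bigoplus_{i \in I} \mathbf{A}_i)^+$ follows from the matching of the operations established above.
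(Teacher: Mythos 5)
Your converse direction is fine: the case analysis on the ordinal-sum operations, the observation that a totally ordered hoop is basic, and the appeal to Theorem~\ref{BL-basic_term_equivalence} together give a complete, if routine, verification, and this matches how the result is handled in the literature (the paper itself does not prove the theorem but cites Busaniche's \emph{Decomposition of BL-chains}, where the forward direction is the entire content). The forward direction, however, is where the theorem lives, and your proposal stops exactly at its two essential claims. You define the jump relation $a \prec b$ iff $a < b$ and $b \imp a = a$, and the induced relation $\approx$, but you then state that transitivity of $\approx$ and the Wajsberg property of the classes are ``the main obstacle'' you expect to need the full strength of the axioms for --- that is, you have deferred rather than proved them. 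Neither is a formality. For transitivity (equivalently, that jump-freeness propagates across intervals) one must show that if $a \prec b$ and $a < y < b$, then $a \prec y$ or $y \prec b$; the natural attempt via $b \imp a \geq (b \imp y)\cdot(y \imp a)$ gives nothing, and the known proofs get around this using nontrivial structure theory of totally ordered hoops (Ferreirim-style results, in effect the semi-cancellativity of Proposition~\ref{to_wajsberg_is_semi_cancellative} --- which you cannot invoke here without circularity, since it is a property of the Wajsberg components you are trying to construct). Likewise, showing that each jump-free class satisfies Tanaka's identity $(x \imp y) \imp y \approx (y \imp x) \imp x$ is the heart of Busaniche's paper, not a corollary of divisibility plus prelinearity that drops out of the setup.

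There are two further unaddressed points of the same kind. First, your verification of the ordinal-sum rules only covers pairs with $a \prec b$; but if $\approx$-classes are defined by ``no jump in between,'' a pair $(a,b)$ in distinct classes $C_i < C_j$ is only guaranteed to have \emph{some} jump pair between them, so you still owe the lemma that cross-class pairs are themselves jump pairs, i.e., $b \imp a = a$ for \emph{all} $a \in C_i$, $b \in C_j$. Second, you never check that $A_i := C_i \cup \lbrace \top \rbrace$ is closed under $\cdot$ and $\imp$ (a priori $a \cdot b$ for $a,b \in C_i$ could fall into a lower class); this closure is again part of the structure theory, not of the definition. In short: your outline identifies the right decomposition strategy and correctly isolates where the difficulty sits, but what you have written is a plan whose unproven steps coincide with the substance of the cited result, so as a proof it has a genuine gap.
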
	
	
	Given $\lbrace \mathbf{A}_i: i \in I \rbrace$ a set of hoops, it is always possible to find another set of hoops $\lbrace \mathbf{B}_i: i \in I \rbrace$ such that $\mathbf{A}_i$ is isomorphic to $\mathbf{B}_i$ and $B_i \cap B_j = \lbrace \top \rbrace$ for all $i,j \in I$, $i \neq j$. Therefore, for simplicity, we can always safely assume that the condition $A_i \cap A_j = \lbrace \top \rbrace$ for all $i \neq j$ is true. As an example, the structures $(\mathbf{S}_n \oplus \mathbf{S}_n)^+$ and $(\mathbf{S}_n \oplus \mathbf{S}_\omega)^+$ are going to be considered BL-chains.
	
	Let $\mathbf{A}$ be a totally ordered Wajsberg hoop and $a,b,c \in A$ such that $c < a \cdot b$. Then $a \cdot b \nleq c$ and $a \nleq b \imp c$. Therefore $b \imp c \leq a$ and
	\begin{equation*}
		\begin{aligned}
			a & = (a \imp (b \imp c)) \imp (b \imp c) \\
			& = ((a \cdot b) \imp c) \imp (b \imp c) \\
			& = b \imp (((a \cdot b) \imp c) \imp c) \\
			& = b \imp (a \cdot b).
		\end{aligned}
	\end{equation*}
	
	The following result is an immediate consequence of the previous observation and a well-known property of totally ordered Wajsberg hoops (see, for example, \cite[Lemma~4.5, Proposition~4.6]{FERREIRIM1992}). We say that a hoop $\mathbf{A}$ is \emph{semi-cancellative} if $c < a \cdot b$ implies $a \imp (a \cdot b) = b$, for all $a,b,c \in A$.
	\begin{proposition}\label{to_wajsberg_is_semi_cancellative}
		Every totally-ordered Wajsberg hoop is semi-cancellative. As a consequence, a nontrivial totally-ordered Wajsberg hoop is either bounded or cancellative. 
	\end{proposition}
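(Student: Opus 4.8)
The plan is to read off both assertions from the displayed computation immediately preceding the statement, which establishes that in any totally ordered Wajsberg hoop $\mathbf{A}$ the relation $c < a \cdot b$ forces $a = b \imp (a \cdot b)$.

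For the first assertion I would invoke the commutativity of the monoid operation. Fix $a, b, c \in A$ with $c < a \cdot b$. Since $a \cdot b = b \cdot a$, applying the preceding computation with the roles of $a$ and $b$ interchanged gives $b = a \imp (b \cdot a) = a \imp (a \cdot b)$, which is precisely the defining condition of semi-cancellativity. This step is essentially immediate; the only subtlety is to read the definition correctly and to observe that the witness $c$ serves merely to certify that $a \cdot b$ is not a least element.

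For the second assertion I would argue by cases on whether $\mathbf{A}$ possesses a least element. If it does, then $\mathbf{A}$ is bounded by definition and we are done. If it does not, then $\mathbf{A}$ has no minimum, and since the order is total every element---in particular every product $a \cdot b$---strictly dominates some element. Thus for arbitrary $a, b \in A$ we may pick some $c < a \cdot b$, and semi-cancellativity yields $a \imp (a \cdot b) = b$. As $a, b$ range over all of $A$, the identity $x \imp (x \cdot y) \approx y$ holds throughout $\mathbf{A}$, i.e.\ $\mathbf{A}$ is cancellative.

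I do not anticipate a genuine obstacle, since the computational heart of the matter is already carried by the observation preceding the statement; what remains is a case split on the existence of a bottom element together with the elementary fact that, in a total order, the absence of a least element is equivalent to every element dominating some strictly smaller one. The one point deserving attention is the logical shape of semi-cancellativity: it is vacuous exactly at a potential bottom product, and this is precisely what makes the bounded-versus-cancellative dichotomy fall out cleanly. (The nontriviality hypothesis guarantees $\bot \neq \top$ in the bounded case, so that the two alternatives are genuinely distinct, a nontrivial bounded algebra failing cancellativity already at $\bot \imp (\bot \cdot \bot) = \top \neq \bot$.)
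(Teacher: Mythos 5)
Your proof is correct, and its computational core is the same as the paper's, but you resolve the second assertion differently. For semi-cancellativity you read off the displayed computation exactly as intended, and your commutativity swap is the right (and necessary) move: the computation literally yields $a = b \imp (a \cdot b)$ under $c < a \cdot b$, while the definition demands $b = a \imp (a \cdot b)$, so interchanging $a$ and $b$ in the hypothesis $c < a \cdot b = b \cdot a$ is precisely what closes that gap. For the bounded-or-cancellative dichotomy, however, the paper gives no argument at all: it declares the proposition an immediate consequence of the observation together with a ``well-known property'' of totally ordered Wajsberg hoops, delegating to Ferreirim's thesis (Lemma~4.5, Proposition~4.6). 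You instead derive the dichotomy directly from semi-cancellativity by a case split on the existence of a least element: if there is none, then in a total order every element, in particular every product $a \cdot b$, strictly dominates some $c$, so the implication in semi-cancellativity fires for every pair and the identity $x \imp (x \cdot y) \approx y$ holds throughout. This makes the proposition self-contained where the paper relies on an external citation, at essentially no extra cost. Your closing parenthetical is also accurate and is a genuine (if optional) refinement: nontriviality is what makes the alternatives mutually exclusive, since a bounded hoop has $\bot \imp (\bot \cdot \bot) = \top$, so a nontrivial bounded hoop cannot be cancellative; the statement itself only requires the inclusive reading.
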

	
	Now consider $\mathbf{A} = (\bigoplus_{i \in I} \mathbf{A}_i)^+$, where $\langle I, \leq \rangle$ is a total order with first element $0$ and $\lbrace \mathbf{A}_i: i \in I \rbrace$ is a set of nontrivial totally ordered Wajsberg hoops such that $A_i \cap A_j = \lbrace \top \rbrace$ for all $i \neq j$ and $\mathbf{A}_0$ has first element $\bot$. By semi-cancellativity, if $a \neq \top$, then $a^2 = a$ if and only if $a$ is the first element of some $\mathbf{A}_i$. Notice that, as a consequence of the definition of ordinal sums,
	\begin{equation}\label{BLchains_negation}
		\neg\neg a =
		\begin{cases}
			a & \text{if $a \in A_0$}, \\
			\top & \text{if $a \in A_i$, $i > 0$}.
		\end{cases}
	\end{equation}
	
	Considering these remarks and Proposition~\ref{to_wajsberg_is_semi_cancellative} we obtain the following result.
	\begin{proposition}\label{BL_chains_generated_subalgebras}
		Let $\langle J, \leq \rangle$ be a nonempty subposet of $\langle I, \leq \rangle$ such that $0 \notin J$. Consider $\langle a_j: j \in J \rangle \in \prod_{j \in J} A_j$ such that, for all $j \in J$, $a_j^2 = a_j$ or $a_j^{n+1} < a_j^n$ for all $n \in \omega$. Then, $\langle \lbrace a_j: j \in J \rbrace \rangle^\mathbf{A} = \bigcup_{j \in \lbrace 0 \rbrace \cup J} B_j$ where
		\begin{equation*}
			B_j = 
			\begin{cases}
				\lbrace \bot, \top \rbrace & \text{if $j = 0$}, \\
				\lbrace a_j, \top \rbrace & \text{if $j > 0$ and $a_j^2 = a_j$}, \\
				\lbrace a_j^n: n \in \omega \rbrace & \text{if $j > 0$ and $a_j^{n+1} < a_j^n$ for all $n \in \omega$}.
			\end{cases}
		\end{equation*}
		
		Therefore, $\langle \lbrace a_j: j \in J \rbrace \rangle^\mathbf{A} \cong (\bigoplus_{j \in \lbrace 0 \rbrace \cup J} \mathbf{B}_j)^+$ where 
		\begin{equation*}
			\mathbf{B}_j = 
			\begin{cases}
				\mathbf{S}_1 & \text{if $j = 0$ or $j > 0$ and $a_j^2 = a_j$}, \\
				\mathbf{S}_\omega & \text{if $j > 0$ and $a_j^{n+1} < a_j^n$ for all $n \in \omega$}.
			\end{cases}
		\end{equation*}
	\end{proposition}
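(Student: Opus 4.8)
The plan is to establish the set equality $\langle\{a_j : j \in J\}\rangle^{\mathbf{A}} = \bigcup_{j\in\{0\}\cup J} B_j$ by a double inclusion, after first identifying each $B_j$ as a subuniverse of the corresponding component $\mathbf{A}_j$; the isomorphism with the ordinal sum then falls out of Proposition~\ref{subhoops_of_ordinal_sum}. Throughout I assume $a_j \neq \top$ for every $j \in J$, since a generator equal to $\top$ contributes nothing and may be discarded from $J$.

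The core of the argument is local, inside a single component. Fixing $j \in J$, I work in the totally ordered Wajsberg hoop $\mathbf{A}_j$, which is semi-cancellative by Proposition~\ref{to_wajsberg_is_semi_cancellative}. If $a_j^2 = a_j$, then since $a_j \neq \top$ the remark preceding Proposition~\ref{to_wajsberg_is_semi_cancellative} shows $a_j$ is the first element of $\mathbf{A}_j$; a direct check of the products and implications on $\{a_j,\top\}$ confirms that this set is closed under $\cdot_j$ and $\imp_j$ and that $a_j \mapsto \bot$, $\top\mapsto\top$ is an isomorphism onto $\mathbf{S}_1$. If instead $a_j^{n+1} < a_j^n$ for all $n$, then the powers $\{a_j^n : n\in\omega\}$ are pairwise distinct; closure under $\cdot_j$ is immediate from $a_j^m\cdot a_j^n = a_j^{m+n}$, and for $\imp_j$ one splits into the case $m\geq n$ (where $a_j^m\leq a_j^n$ gives $a_j^m\imp_j a_j^n = \top$) and the case $m<n$. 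In the latter case I would write $a_j^n = a_j^m\cdot a_j^{n-m}$ and invoke semi-cancellativity with the witness $c = a_j^{n+1} < a_j^n$ to conclude $a_j^m\imp_j a_j^n = a_j^{n-m}$. This is the step I expect to be the main obstacle, as it is exactly where the hypothesis of strictly decreasing powers and the semi-cancellativity of Wajsberg hoops come into play; the map $a_j^n\mapsto a^n$ is then the desired isomorphism onto $\mathbf{S}_\omega$.

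With the components in hand I would treat component $0$ separately: as $0\notin J$ there is no generator there, but every subalgebra of a bounded hoop contains the constants $\bot,\top$, and $\{\bot,\top\}$ is a subuniverse of $\mathbf{A}_0$ isomorphic to $\mathbf{S}_1$ (using $\bot\cdot\bot=\bot$, which follows from $x\cdot y\leq x\land y$ and the minimality of $\bot$). Setting $\mathbf{B}_0 = \mathbf{S}_1$ and defining $\mathbf{B}_j$ for $j\in J$ as above, Proposition~\ref{subhoops_of_ordinal_sum} guarantees that $D := (\bigoplus_{j\in\{0\}\cup J}\mathbf{B}_j)^+$ is a subalgebra of $\mathbf{A}$ with universe $\bigcup_{j\in\{0\}\cup J} B_j$, since $0\in\{0\}\cup J$ and $\bot\in B_0$.

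Finally I would close the two inclusions. Each generator $a_j$ lies in $B_j\subseteq D$, so $\langle\{a_j:j\in J\}\rangle^{\mathbf{A}}\subseteq D$. Conversely, the generated subalgebra contains $\bot$ and $\top$ (hence $B_0$), contains $\{a_j,\top\}=B_j$ in the idempotent case, and, being closed under $\cdot$, contains every power $a_j^n$, hence $B_j$, in the decreasing case; thus $D\subseteq\langle\{a_j:j\in J\}\rangle^{\mathbf{A}}$. Equality yields the stated description of the universe, and the identification of $D$ as $(\bigoplus_{j\in\{0\}\cup J}\mathbf{B}_j)^+$ with the indicated $\mathbf{B}_j$ gives the isomorphism.
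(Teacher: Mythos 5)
Your proposal is correct and follows essentially the same route the paper intends: the paper derives this proposition from the remarks preceding it (the semi-cancellativity characterization of idempotents as first elements of components and the computation of implications between powers) together with Proposition~\ref{subhoops_of_ordinal_sum}, which is exactly the local-analysis-plus-ordinal-sum-assembly argument you give, including the correct use of the witness $c = a_j^{n+1} < a_j^n$ to get $a_j^m \imp a_j^n = a_j^{n-m}$. Your explicit side remark that generators equal to $\top$ may be discarded is a sensible reading of the implicit hypothesis and does not change the argument.
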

	
	\section{General results about nuclei on residuated lattices}\label{sec:general_results}
	
	The goal of this section is to summarize general properties of nuclei on residuated lattices. Among various results a description of an arbitrary nucleus is given (Theorem~\ref{nucleilocaldescription}).
	
	A \emph{closure operator} on a poset $\mathbf{P}$ is a map $\gamma \colon P \to P$ such that 
	\begin{equation*}
		a \leq \gamma(a), \quad a \leq b \Rightarrow \gamma(a) \leq \gamma(b) \quad \text{and} \quad \gamma(a) = \gamma(\gamma(a))
	\end{equation*}
	for all $a,b \in P$. Since $\gamma(a) = \min\lbrace b \in \gamma(P): a \leq b \rbrace$ for all $a \in P$, closure operators are uniquely determined by its image $\gamma(P) = \lbrace a \in P: \gamma(a) = a \rbrace$. 
	
	A $\emph{nucleus}$ on a residuated lattice $\mathbf{A}$ is a closure operator $\gamma$ on the lattice reduct of $\mathbf{A}$ satisfying $\gamma(a) \cdot \gamma(b) \leq \gamma(a \cdot b)$ for all $a,b \in A$. It can be shown that an arbitrary map $\gamma$ on a residuated lattice $\mathbf{A}$ is a nucleus if and only if
	\begin{equation}\label{nuclei_equation}
		a \imp \gamma(b) = \gamma(a) \imp \gamma(b)
	\end{equation}
	for all $a,b \in A$ \cite{ROSENTHAL1990}. If $\mathbf{A} = \langle A, \land, \lor, \cdot, \imp, \bot, \top \rangle$ is a residuated lattice and $\gamma$ a nucleus on $\mathbf{A}$, then the image $\gamma(A)$ can be endowed with a residuated lattice structure \cite{GALATOS2005} by defining $\mathbf{A}_\gamma := \langle \gamma(A), \land, \lor_\gamma, \cdot_\gamma, \imp, \gamma(\bot), \top \rangle$ where
	\begin{equation*}
		\gamma(a) \lor_\gamma \gamma(b) := \gamma(\gamma(a) \lor \gamma(b)) \quad \text{and} \quad \gamma(a) \cdot_\gamma \gamma(b) := \gamma(\gamma(a) \cdot \gamma(b))
	\end{equation*}
	for all $a,b \in A$. The algebra $\mathbf{A}_\gamma$ is known as the \emph{nuclear image of $\mathbf{A}$ relative to $\gamma$}. We say that $a \in A$ is $\gamma$-\emph{dense} if $\gamma(a) = \top$. It can be easily show that the set of all $\gamma$-dense elements, which will be denoted $D_\gamma(A)$, is an implicative filter on $\mathbf{A}$, that is, $\top \in  D_\gamma(A)$ and if $a, a \imp b \in D_\gamma(A)$, then $b \in D_\gamma(A)$. 
	\begin{example}\label{nuclei_examples}
		Let $a$ be a fixed element of $\mathbf{A}$. If we define  
		\begin{equation*}
			\lor_a x := x \lor a \quad \text{and} \quad \neg_a \neg_a x := (x \imp a) \imp a,
		\end{equation*}
		then the functions $\lor_a$ and $\neg_a\neg_a$ are nuclei on $\mathbf{A}$: by Proposition~\ref{residuated_lattices_properties}, 
		\begin{equation*}
			\begin{aligned}
				(x \lor a) \imp (y \lor a) & = [x \imp (y \lor a)] \land [a \imp (y \lor a)] \\
				& = x \imp (y \lor a)
			\end{aligned}
		\end{equation*}
		and 
		\begin{equation*}
			\begin{aligned}
				\lbrack (x \imp a) \imp a \rbrack \imp \lbrack  (y \imp a) \imp a \rbrack & = (y \imp a) \imp \lbrack  ((x \imp a) \imp a) \imp a \rbrack \\
				& = (y \imp a) \imp (x \imp a) \\
				& = x \imp ((y \imp a) \imp a).
			\end{aligned}
		\end{equation*}
		
		Moreover, $\lor_a(A) = \lbrace b \in A: a \leq b \rbrace$ and $\neg_a\neg_a(A) = \lbrace b \imp a: b \in A \rbrace$. The function $\neg_a x := x \imp a$ is known as the \emph{relative negation} \cite{ZHAO2012} (to the element $a$) and generalizes the classical negation $\neg x = x \imp \bot$.
	\end{example}
	
	We continue with some basic algebraic properties of nuclei on residuated lattices. Some of them are well-known results for closure operators on partially ordered sets (see, for example, \cite{PRIESTLEY2002}). Let $\mathbf{A}$ be a residuated lattice and $\gamma$ a nucleus on $\mathbf{A}$. 
	\begin{lemma}\label{nuclei_properties1}
		For all $a, b \in A$ the following hold:
		\begin{enumerate}[label=\rm{(\roman*)}]
			\item if $B \subseteq A$ and $\bigwedge \gamma(B)$ exists, then $\gamma(\bigwedge \gamma(B)) = \bigwedge \gamma(B)$,
			\item $\gamma(a \lor b) = \gamma(\gamma(a) \lor b) = \gamma(\gamma(a) \lor \gamma(b))$,
			\item $\gamma(\bot) = \min \gamma(A)$ and $\gamma(\top) = \top$,
			\item $\gamma(ab) = \gamma(\gamma(a)b) = \gamma(\gamma(a)\gamma(b))$,
			\item $\gamma(a \imp b) \leq \gamma(a) \imp \gamma(b) \leq a \imp \gamma(b)$,
			\item $\gamma(a \imp \gamma(b)) = a \imp \gamma(b)$,
			\item if $b \in A$ and $\gamma(b) \leq \gamma(a)$, then $a \lor \gamma(b) \leq \gamma(a) \leq (a \imp \gamma(b)) \imp \gamma(b)$,
			\item $a \lor \gamma(\bot) \leq \gamma(a) \leq (a \imp \gamma(\bot)) \imp \gamma(\bot)$.
		\end{enumerate}
	\end{lemma}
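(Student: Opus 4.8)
The plan is to dispatch the eight items in the order listed, grouping them by which feature of a nucleus they exploit: the bare closure-operator axioms (extensivity $a\leq\gamma(a)$, monotonicity, idempotence) for (i)--(iii), the product inequality $\gamma(a)\cdot\gamma(b)\leq\gamma(ab)$ for (iv), and the residuation characterization \eqref{nuclei_equation} together with the adjunction $x\cdot y\leq z\iff x\leq y\imp z$ for (v)--(viii). The constant workhorse throughout is Proposition~\ref{residuated_lattices_properties}.

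For (i), set $m=\bigwedge\gamma(B)$; monotonicity and idempotence give $\gamma(m)\leq\gamma(\gamma(b))=\gamma(b)$ for each $b\in B$, hence $\gamma(m)\leq m$, while extensivity gives $m\leq\gamma(m)$. For (ii) and (iv) I would argue by sandwiching. In (ii), extensivity yields the chain $a\lor b\leq\gamma(a)\lor b\leq\gamma(a)\lor\gamma(b)$, so applying $\gamma$ the three values increase; conversely $\gamma(a),\gamma(b)\leq\gamma(a\lor b)$ gives $\gamma(a)\lor\gamma(b)\leq\gamma(a\lor b)$ and then $\gamma(\gamma(a)\lor\gamma(b))\leq\gamma(a\lor b)$ by idempotence, closing the loop. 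Item (iv) is identical in shape, with the outer bound now supplied by the nucleus inequality: $\gamma(a)\gamma(b)\leq\gamma(ab)$ and idempotence give $\gamma(\gamma(a)\gamma(b))\leq\gamma(ab)$. Item (iii) is immediate, since $\bot\leq c$ for all $c\in\gamma(A)$ forces $\gamma(\bot)\leq c$, and $\top\leq\gamma(\top)\leq\top$.

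The residuated items are where the only genuine thought is required. For the first inequality of (v), I would transpose across the adjunction: $\gamma(a\imp b)\leq\gamma(a)\imp\gamma(b)$ is equivalent to $\gamma(a\imp b)\cdot\gamma(a)\leq\gamma(b)$, and the nucleus inequality bounds the left side by $\gamma((a\imp b)\cdot a)\leq\gamma(b)$, using $(a\imp b)\cdot a\leq b$ from Proposition~\ref{residuated_lattices_properties}(vii); the second inequality is antitonicity of $\imp$ in its first argument applied to $a\leq\gamma(a)$. Item (vi)---that $a\imp\gamma(b)$ is a $\gamma$-fixpoint---is the key technical point: writing $c=a\imp\gamma(b)$ we have $c\cdot a\leq\gamma(b)$, so $\gamma(c)\cdot a\leq\gamma(c)\cdot\gamma(a)\leq\gamma(c\cdot a)\leq\gamma(\gamma(b))=\gamma(b)$, and transposing back yields $\gamma(c)\leq a\imp\gamma(b)=c$.

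Finally, the left inequality of (vii) is just $a\leq\gamma(a)$ together with the hypothesis $\gamma(b)\leq\gamma(a)$; for the right inequality I would rewrite $a\imp\gamma(b)=\gamma(a)\imp\gamma(b)$ via \eqref{nuclei_equation} and apply Proposition~\ref{residuated_lattices_properties}(vii) in the form $\gamma(a)\cdot(\gamma(a)\imp\gamma(b))\leq\gamma(a)\land\gamma(b)\leq\gamma(b)$, which transposes to $\gamma(a)\leq(a\imp\gamma(b))\imp\gamma(b)$. Item (viii) then drops out as the special case $b=\bot$ of (vii), since $\gamma(\bot)\leq\gamma(a)$ holds automatically. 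The hard part is less any single computation than keeping the logical dependencies straight---in particular proving (v) and (vi) without circular appeal to one another---and the residuation arguments above are arranged precisely to avoid that trap.
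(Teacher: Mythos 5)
Your proof is correct and follows essentially the same route as the paper: sandwich arguments via monotonicity and idempotence for (i)--(iv), and residuation/adjunction arguments for (v)--(viii), with (viii) as the case $b=\bot$ of (vii). The only (cosmetic) differences are that you prove (vi) by a direct transposition rather than deriving it from (v) as the paper does, and you obtain the right-hand bound in (vii) from \eqref{nuclei_equation} instead of applying $\gamma$ to $a \leq (a \imp \gamma(b)) \imp \gamma(b)$ and invoking (vi); both variants are the same computation rearranged.
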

	\begin{proof}
		We restrict ourselves to the proofs of items (iv) to (vii). To prove (iv) notice that $a \cdot b \leq \gamma(a) \cdot b \leq \gamma(a) \cdot \gamma(b) \leq \gamma(a \cdot b)$. Since $\gamma(a \cdot b) = \gamma(\gamma(a \cdot b))$ and $\gamma$ is monotone we obtain the desired result. By Proposition~\ref{residuated_lattices_properties}, $a \cdot (a \imp b) \leq b$. Therefore $\gamma(a)\gamma(a \imp b) \leq \gamma(a \cdot (a \imp b)) \leq \gamma(b)$, or equivalently, $\gamma(a \imp b) \leq \gamma(a) \imp \gamma(b)$. Also notice that $\gamma(a) \imp b \leq a \imp b$, because $a \leq \gamma(a)$. This ends the proof item (v). The inequality $a \imp \gamma(b) \leq \gamma(a \imp \gamma(b))$ is immediate. Moreover, notice that $a \leq \gamma(a) \leq (\gamma(a) \imp \gamma(b)) \imp \gamma(b)$. As a consequence of item (v),
		\begin{equation*}
			\gamma(a \imp \gamma(b)) \leq \gamma(a) \imp \gamma(\gamma(b)) = \gamma(a) \imp \gamma(b) \leq a \imp \gamma(b).
		\end{equation*}
		
		This concludes the proof of item (vi). To prove item (vii) first notice that $a \leq (a \imp \gamma(b)) \imp \gamma(b)$. Therefore, using item (vi) and the monotonicity of $\gamma$, 
		\begin{equation*}
			a \lor \gamma(b) \leq \gamma(a) \leq \gamma((a \imp \gamma(b)) \imp \gamma(b)) = (a \imp \gamma(b)) \imp \gamma(b).
		\end{equation*} 
	\end{proof}
	
	As a direct consequence of Lemma~\ref{nuclei_properties1} and previous examples we obtain the following characterization of nuclei on MV-algebras \cite[Corollary~2.10]{HAN2011}. Recall that, if $\mathbf{A}$ is an MV-algebra, then $a \lor b = (a \imp b) \imp b$ for all $a,b \in A$.
	\begin{corollary}\label{MV_nuclei}
		If $\mathbf{A}$ is an MV-algebra, then $\gamma$ is a nucleus on $\mathbf{A}$ if and only if $\gamma(a) = a \lor \gamma(\bot)$ for all $a \in A$.
	\end{corollary}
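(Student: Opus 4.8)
The plan is to read the result straight off the sandwich inequality in Lemma~\ref{nuclei_properties1}(viii), using the defining identity of MV-algebras to collapse its two sides into a single value. No new computation about the monoid or residual structure should be needed; everything hinges on one substitution.

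For the forward implication I would start with an arbitrary nucleus $\gamma$ and invoke item (viii) of Lemma~\ref{nuclei_properties1}, which already records
\[
a \lor \gamma(\bot) \leq \gamma(a) \leq (a \imp \gamma(\bot)) \imp \gamma(\bot)
\]
for every $a \in A$. The one substantive observation is that in an MV-algebra the upper bound coincides with the lower bound: applying the recalled identity $a \lor b = (a \imp b) \imp b$ with $b = \gamma(\bot)$ gives $(a \imp \gamma(\bot)) \imp \gamma(\bot) = a \lor \gamma(\bot)$. Hence the two bounds agree and we are forced to have $\gamma(a) = a \lor \gamma(\bot)$.

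For the converse I would avoid verifying the nucleus axioms by hand. Given that $\gamma(a) = a \lor \gamma(\bot)$ for all $a$, I would set $c := \gamma(\bot)$ and observe that $\gamma$ is then literally the map $\lor_c x = x \lor c$, which Example~\ref{nuclei_examples} already establishes to be a nucleus on any residuated lattice, MV or not. One should also note for consistency that any such $\lor_c$ satisfies $\lor_c(\bot) = \bot \lor c = c$, so a map of this form does indeed recover the characterizing equation.

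I do not expect a genuine obstacle here: the whole content is recognizing that the MV-identity turns the two-sided estimate of Lemma~\ref{nuclei_properties1}(viii) into an equality, after which both directions are immediate. The only point worth flagging is presentational, namely that the characterizing equation $\gamma(a) = a \lor \gamma(\bot)$ refers to $\gamma$ on its right-hand side; in the converse one must therefore name the constant $c := \gamma(\bot)$ and appeal to the Example rather than attempt to \emph{solve} the equation for $\gamma$.
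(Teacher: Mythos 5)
Your proposal is correct and is exactly the paper's intended argument: the paper derives the corollary as a direct consequence of Lemma~\ref{nuclei_properties1}(viii) together with the MV-identity $a \lor b = (a \imp b) \imp b$ (which collapses the sandwich into an equality), and the converse via the nucleus $\lor_a$ from Example~\ref{nuclei_examples}. Your additional remark about naming $c := \gamma(\bot)$ in the converse is a sound presentational point but involves no mathematical departure from the paper.
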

	
	To end this section we provide a description of all nuclei on an arbitrary residuated lattice $\mathbf{A}$. We start with a characterization of the elements in the image of a nucleus \cite[Proposition~2.8]{HAN2011}. 
	\begin{lemma}\label{nuclei_image_characterization}
		If $\mathbf{A}$ is a residuated lattice, $\gamma$ is a nucleus on $\mathbf{A}$ and $a \in A$, then $\gamma(a) = a$ if and only if $\gamma(b) \leq \neg_a\neg_a b$ for all $b \in A$.
	\end{lemma}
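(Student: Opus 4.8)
The plan is to prove the two implications separately; the reverse one is essentially immediate, while the forward one is where the multiplicativity of the nucleus does the work. Throughout I will use that $\neg_a\neg_a b = (b \imp a) \imp a$ and that, by the residuation adjunction, the target inequality $\gamma(b) \leq (b \imp a) \imp a$ is equivalent to $\gamma(b) \cdot (b \imp a) \leq a$.

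For the reverse direction, suppose $\gamma(b) \leq \neg_a\neg_a b$ for every $b \in A$. Specializing to $b = a$ and using $a \imp a = \top$ together with $\top \imp a = a$ (Proposition~\ref{residuated_lattices_properties}(i),(ii)) gives $\gamma(a) \leq (a \imp a) \imp a = a$. Since $a \leq \gamma(a)$ because $\gamma$ is a closure operator, we conclude $\gamma(a) = a$.

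For the forward direction, assume $\gamma(a) = a$ and fix $b \in A$. I would estimate the product $\gamma(b) \cdot (b \imp a)$ by inserting $\gamma$ in the second factor and then applying the defining inequality of a nucleus:
\begin{equation*}
\gamma(b) \cdot (b \imp a) \leq \gamma(b) \cdot \gamma(b \imp a) \leq \gamma\bigl(b \cdot (b \imp a)\bigr) \leq \gamma(a) = a,
\end{equation*}
where the first step uses $b \imp a \leq \gamma(b \imp a)$ and monotonicity of multiplication (Proposition~\ref{residuated_lattices_properties}(iii)), the second is the nucleus property $\gamma(x)\gamma(y) \leq \gamma(xy)$, and the third uses $b \cdot (b \imp a) \leq a$ (Proposition~\ref{residuated_lattices_properties}(vii)) together with the monotonicity of $\gamma$ and the hypothesis $\gamma(a) = a$. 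By residuation this is exactly $\gamma(b) \leq (b \imp a) \imp a = \neg_a\neg_a b$, as required.

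The only real obstacle is choosing the right product to bound in the forward direction; once one decides to multiply $\gamma(b)$ against $b \imp a$ rather than manipulating implications directly, the chain above is forced by the nucleus axiom and the elementary law $b \cdot (b \imp a) \leq a$. An alternative route would rewrite $(b \imp a) \imp a = (b \imp a) \imp \gamma(a)$ as $\gamma(b \imp a) \imp \gamma(a)$ via the nucleus equation~\eqref{nuclei_equation} and then apply Lemma~\ref{nuclei_properties1}(v), but the multiplicative estimate is shorter and avoids invoking~\eqref{nuclei_equation}.
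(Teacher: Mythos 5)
Your proof is correct in both directions: the specialization $b = a$ together with $(a \imp a) \imp a = \top \imp a = a$ settles the reverse implication, and the chain $\gamma(b) \cdot (b \imp a) \leq \gamma(b)\cdot\gamma(b \imp a) \leq \gamma(b \cdot (b \imp a)) \leq \gamma(a) = a$, transposed by residuation, settles the forward one; each step is justified by exactly the facts you cite (Proposition~\ref{residuated_lattices_properties}(iii) and (vii), the nucleus inequality, and monotonicity of $\gamma$). Note that the paper does not actually prove this lemma: it quotes it from Han and Zhao \cite[Proposition~2.8]{HAN2011}, so there is no in-paper argument to compare against, and your write-up supplies a self-contained verification that the paper omits. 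Your argument is the natural one, relying only on the order-theoretic closure axioms plus the single multiplicative axiom $\gamma(x)\gamma(y) \leq \gamma(xy)$, and in particular it does not need the characterization \eqref{nuclei_equation}; the alternative you sketch, rewriting $(b \imp a) \imp a = (b \imp a) \imp \gamma(a) = \gamma(b \imp a) \imp \gamma(a)$ via \eqref{nuclei_equation} and then bounding $\gamma(b)$ using Lemma~\ref{nuclei_properties1}(v), is also valid and is closer in spirit to how the surrounding section manipulates nuclei, but as you say it is longer and invokes heavier machinery for the same conclusion.
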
 
	
	The previous lemma will be used frequently in the following results and sections. Before presenting the main result of this section we need some preliminaries.
	\begin{proposition}\label{nuclei_properties2} Let $\mathbf{A}$ be a residuated lattice.
		\begin{enumerate}[label=\rm{(\roman*)}]
			\item If $\gamma$ is a nucleus on $\mathbf{A}$, then $\gamma(a) = \min \lbrace \neg_b\neg_b a: b \in \gamma(A) \rbrace$ for all $a \in A$.
			\item If $\lbrace \gamma_i: i \in I \rbrace$ is a set of nuclei on $\mathbf{A}$ such that $\gamma(a) := \bigwedge \lbrace \gamma_i(a): i \in I \rbrace$ exists for all $a \in A$, then $\gamma$ is a nucleus on $\mathbf{A}$.
		\end{enumerate}
	\end{proposition}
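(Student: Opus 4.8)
The plan is to treat the two items separately, each reducing to the characterization of the image of a nucleus (Lemma~\ref{nuclei_image_characterization}) and the basic order behaviour of $\imp$ and $\cdot$.

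For item (i), I would show that $\gamma(a)$ is simultaneously a lower bound of the set $\lbrace \neg_b\neg_b a : b \in \gamma(A) \rbrace$ and a member of it, which forces it to be the minimum. For the lower bound, fix $b \in \gamma(A)$, so that $\gamma(b) = b$; Lemma~\ref{nuclei_image_characterization} applied to this $b$ yields $\gamma(c) \leq \neg_b\neg_b c$ for every $c \in A$, and in particular $\gamma(a) \leq \neg_b\neg_b a$. To see the bound is attained, I would evaluate at $b := \gamma(a)$, which lies in $\gamma(A)$ by idempotence. Since $a \leq \gamma(a)$, Proposition~\ref{residuated_lattices_properties}(ii) gives $a \imp \gamma(a) = \top$, whence
\begin{equation*}
	\neg_{\gamma(a)}\neg_{\gamma(a)} a = (a \imp \gamma(a)) \imp \gamma(a) = \top \imp \gamma(a) = \gamma(a),
\end{equation*}
using Proposition~\ref{residuated_lattices_properties}(i) for the last equality. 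Thus $\gamma(a)$ belongs to the set and is a lower bound, so it is the minimum.

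For item (ii), writing $\gamma(a) = \bigwedge_{i \in I} \gamma_i(a)$, I would verify the three closure-operator conditions together with the nuclear inequality directly, repeatedly exploiting $\gamma(a) \leq \gamma_i(a)$ for each $i$. Extensivity is immediate, since $a$ is a lower bound of $\lbrace \gamma_i(a) : i \in I \rbrace$; monotonicity follows because each $\gamma_i$ is monotone and meets respect the order. The nuclear inequality is equally direct: from $\gamma(a) \leq \gamma_i(a)$ and $\gamma(b) \leq \gamma_i(b)$ we get $\gamma(a) \cdot \gamma(b) \leq \gamma_i(a) \cdot \gamma_i(b) \leq \gamma_i(a \cdot b)$ for every $i$, and taking the meet over $i$ gives $\gamma(a) \cdot \gamma(b) \leq \gamma(a \cdot b)$.

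The step that needs the actual idea is idempotence. Here I would use $\gamma(a) \leq \gamma_i(a)$ together with the monotonicity and idempotence of each individual nucleus: applying $\gamma_i$ gives $\gamma_i(\gamma(a)) \leq \gamma_i(\gamma_i(a)) = \gamma_i(a)$, so that $\gamma(\gamma(a)) = \bigwedge_i \gamma_i(\gamma(a)) \leq \bigwedge_i \gamma_i(a) = \gamma(a)$, the reverse inequality being extensivity. Alternatively, one can bypass the case analysis entirely by invoking the characterization~\eqref{nuclei_equation}: distributing $\imp$ over the meet via Proposition~\ref{residuated_lattices_properties}(xii) and then using $a \imp \gamma_i(b) = \gamma_i(a) \imp \gamma_i(b)$ for each $\gamma_i$ gives $a \imp \gamma(b) = \bigwedge_i(\gamma_i(a) \imp \gamma_i(b))$, which coincides with $\gamma(a) \imp \gamma(b) = \bigwedge_i(\gamma(a) \imp \gamma_i(b))$ (one inequality is automatic from $a \leq \gamma(a)$, the other termwise from $\gamma(a) \leq \gamma_i(a)$). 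I expect idempotence, equivalently the nontrivial inequality in this characterization, to be the only point requiring more than a one-line argument, since everything else is forced by the defining inequality $\gamma(a) \leq \gamma_i(a)$.
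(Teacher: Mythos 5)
Your proposal is correct, and it splits into one part that matches the paper and one that does not. For item (i) your argument is exactly the paper's: Lemma~\ref{nuclei_image_characterization} gives $\gamma(a) \leq \neg_b\neg_b a$ for every $b \in \gamma(A)$, and the bound is attained at $b = \gamma(a) \in \gamma(A)$; your computation $\neg_{\gamma(a)}\neg_{\gamma(a)}a = (a \imp \gamma(a)) \imp \gamma(a) = \top \imp \gamma(a) = \gamma(a)$ merely spells out what the paper asserts in a single line. For item (ii) your primary route is genuinely different: you verify the definition of a nucleus directly --- extensivity and monotonicity termwise, idempotence via $\gamma_i(\gamma(a)) \leq \gamma_i(\gamma_i(a)) = \gamma_i(a)$, and the multiplicative inequality $\gamma(a)\cdot\gamma(b) \leq \gamma_i(a)\cdot\gamma_i(b) \leq \gamma_i(a\cdot b)$ followed by a meet --- whereas the paper never touches the closure axioms individually and instead verifies the single Rosenthal criterion~\eqref{nuclei_equation} in one chain, distributing $\imp$ over the meet via Proposition~\ref{residuated_lattices_properties}(xii) and applying $a \imp \gamma_i(b) = \gamma_i(a) \imp \gamma_i(b)$ for each $i$. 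Your route is more elementary in its prerequisites (it uses only the definition of a nucleus, not the equivalence~\eqref{nuclei_equation}, which the paper quotes from the literature without proof), and it is sound: the meets $\bigwedge_i \gamma_i(\gamma(a))$ and $\bigwedge_i \gamma_i(a \cdot b)$ that you need exist by the hypothesis applied to the elements $\gamma(a)$ and $a \cdot b$, and termwise inequalities pass to meets. What the paper's route buys is compactness: the criterion packages all four conditions into one equation, at the cost of invoking the interplay between residuation and arbitrary infima. Your ``alternative'' argument is, in substance, precisely the paper's proof, including the observation that the easy inequality $\gamma(a) \imp \gamma(b) \leq a \imp \gamma(b)$ comes from $a \leq \gamma(a)$ while the other direction is obtained termwise from $\gamma(a) \leq \gamma_i(a)$.
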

	\begin{proof} 
		Suppose $\gamma$ is a nucleus on $\mathbf{A}$ and let $a \in A$. By Lemma~\ref{nuclei_image_characterization} we have $\gamma(a) \leq \neg_b \neg_b a$ for all $b \in \gamma(A)$. Moreover, $\gamma(a) \in \gamma(A)$ and $\neg_{\gamma(a)} \neg_{\gamma(a)} a = \gamma(a)$. Now consider the hypothesis in (ii) and let $a,b \in A$. Since $a \leq \gamma(a)$, it is clear that $\gamma(a) \imp \gamma(b) \leq a \imp \gamma(b)$. Moreover,
		\begin{equation*}
			\begin{aligned}
				a \imp \textstyle \bigwedge \lbrace \gamma_i(b) : i \in I \rbrace & = \textstyle \bigwedge \lbrace a \imp \gamma_i(b): i \in I \rbrace \\
				& = \textstyle \bigwedge \lbrace \gamma_i(a) \imp \gamma_i(b): i \in I \rbrace \\
				& \leq \textstyle \bigwedge \lbrace \gamma(a) \imp \gamma_i(b): i \in I \rbrace \\
				& = \gamma(a) \imp \textstyle \bigwedge \lbrace \gamma_i(b) : i \in I \rbrace. 
			\end{aligned}
		\end{equation*} 
	\end{proof}
	
	The previous result greatly improves a result that was already proven in the context of nuclei on frames, also known as complete Heyting algebras or complete Brouwerian algebras (see \cite[Lemma 7]{SIMMONS1978}). 
	
	As an immediate consequence of Example~\ref{nuclei_examples} and Proposition~\ref{nuclei_properties2} be obtain the following description for an arbitrary nucleus on a residuated lattice $\mathbf{A}$.
	\begin{theorem}\label{nucleilocaldescription}
		Let $\gamma$ be a function on $\mathbf{A}$. The following are equivalent:
		\begin{enumerate}[label=\rm{(\roman*)}]
			\item $\gamma$ is a nucleus on $\mathbf{A}$;
			\item for some $B \subseteq A$, $\gamma(x) = \min \lbrace \neg_b \neg_b x: b \in B \rbrace$;
			\item for some $B \subseteq A$, $\gamma(x) = \bigwedge \lbrace \neg_b \neg_b x: b \in B \rbrace$. 
		\end{enumerate}
	\end{theorem}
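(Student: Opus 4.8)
The plan is to establish the cycle of implications (i) $\Rightarrow$ (ii) $\Rightarrow$ (iii) $\Rightarrow$ (i), which gives the equivalence of all three conditions. The guiding observation, already recorded in Example~\ref{nuclei_examples}, is that for every $b \in A$ the map $\neg_b\neg_b$ is a nucleus on $\mathbf{A}$; the content of the theorem is then essentially that an arbitrary nucleus is precisely a pointwise infimum of maps of this form.

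For (i) $\Rightarrow$ (ii) I would simply invoke Proposition~\ref{nuclei_properties2}(i): if $\gamma$ is a nucleus, then $\gamma(x) = \min \lbrace \neg_b\neg_b x : b \in \gamma(A) \rbrace$ for every $x \in A$, so that the set $B = \gamma(A)$ witnesses (ii). For (ii) $\Rightarrow$ (iii) the only point is that a minimum, whenever it exists, coincides with the infimum of the same set; hence the very same $B$ works and $\gamma(x) = \bigwedge \lbrace \neg_b\neg_b x : b \in B \rbrace$.

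For (iii) $\Rightarrow$ (i) I would use closure of nuclei under pointwise infima. Fix $B$ as in (iii) and set $\gamma_b := \neg_b\neg_b$ for $b \in B$; each $\gamma_b$ is a nucleus by Example~\ref{nuclei_examples}. The hypothesis (iii) is exactly the assertion that $\gamma(x) = \bigwedge \lbrace \gamma_b(x) : b \in B \rbrace$ exists for all $x \in A$, which is the standing assumption of Proposition~\ref{nuclei_properties2}(ii); that result then immediately yields that $\gamma$ is a nucleus, closing the cycle.

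Since each implication is a direct application of a cited result, there is no genuine obstacle. The only places calling for a moment of care are the min/inf bookkeeping in (ii) $\Rightarrow$ (iii) and the reading of (iii) in which the notation $\bigwedge \lbrace \neg_b\neg_b x : b \in B \rbrace$ presupposes that the infimum exists, so that the hypothesis of Proposition~\ref{nuclei_properties2}(ii) is actually met. It is also worth remarking that the degenerate nuclei are subsumed: taking $B = \lbrace \top \rbrace$ gives the constant nucleus $\top$ since $\neg_\top\neg_\top x = \top$, and taking $B = A$ recovers the identity, because $\neg_x\neg_x x = (x \imp x) \imp x = \top \imp x = x$ by Proposition~\ref{residuated_lattices_properties}(i) while $\neg_b\neg_b x \geq x$ for every $b$.
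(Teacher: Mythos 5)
Your proof is correct and takes essentially the same route as the paper, which presents the theorem as an immediate consequence of Example~\ref{nuclei_examples} and Proposition~\ref{nuclei_properties2}: your cycle (i) $\Rightarrow$ (ii) $\Rightarrow$ (iii) $\Rightarrow$ (i) just makes explicit how those two results combine (part (i) of the proposition for (i) $\Rightarrow$ (ii), min-equals-inf for (ii) $\Rightarrow$ (iii), and the closure of nuclei under existing pointwise infima together with the fact that each $\neg_b\neg_b$ is a nucleus for (iii) $\Rightarrow$ (i)). Your side remarks on the existence of the infimum and the degenerate choices of $B$ are accurate but not needed.
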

	
	The previous theorem allows us to easily obtain all nuclei on a residuated lattice $\mathbf{A}$ whose lattice reduct is complete (for example, if $A$ is finite).
	
	\section{Nuclei defined by terms}\label{sec:nuclei_terms}
	
	As we previously mentioned in the introduction, our main interest lies in nuclei defined by terms. The identity function $x^\mathbf{A}$, the double negation $(\neg\neg x)^\mathbf{A}$ and the constant function $\top^\mathbf{A}$ all define nuclei on every residuated lattice $\mathbf{A}$ (see Example~\ref{nuclei_examples}). The terms $x$ and $\top$ are uninteresting in the sense that the nuclear image of a residuated lattice relative to these nuclei is either the same structure or trivial. On the other hand, the term $\neg\neg x$ is a well-studied nucleus in the context of residuated structures and logic (for references see the introduction).
	
	We can ask ourselves if there are other terms different from $x$, $\neg\neg x$ and $\top$ that define nuclei and if known results for $\neg\neg x$ also hold for these terms. Indeed, as we are going to show in this section and the following, there are terms $t(x)$ and varieties of residuated lattices $\mathcal{V}$ such that $t^\mathbf{A}$ is a nucleus for every $\mathbf{A} \in \mathcal{V}$ and $\mathcal{V} \nvDash t(x) \approx s(x)$ for all $s(x) \in \lbrace x, \neg\neg x, \top \rbrace$. Furthermore, in the context of varieties of BL-algebras, these terms and $\neg\neg x$ share some similar properties.
	
	Since terms can be interpreted on arbitrary structures, the notion of a nucleus defined by a term arises naturally. Following ideas in \cite[Section~8.6]{GALATOS2007}, a term $t(x)$ in the language of residuated lattices is said to be a \emph{nucleus on a variety} $\mathcal{V} \subseteq \mathcal{RL}$ if the interpretation $t^\mathbf{A}$ is a nucleus on $\mathbf{A}$ for all $\mathbf{A} \in \mathcal{V}$. Using the equality given in \eqref{nuclei_equation} we obtain the following equivalent definition.
	\begin{proposition}
		Let $\mathcal{V}$ be a variety of residuated lattices. Then $t(x)$ is a nucleus on $\mathcal{V}$ if and only if $\mathcal{V} \vDash t(x) \imp t(y) \approx x \imp t(y)$.
	\end{proposition}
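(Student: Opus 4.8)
The plan is to reduce the statement directly to the characterization of nuclei recorded in~\eqref{nuclei_equation}. That characterization asserts, for an \emph{arbitrary} map $\gamma$ on a residuated lattice $\mathbf{A}$ (with no closure-operator hypothesis assumed in advance), that $\gamma$ is a nucleus if and only if $a \imp \gamma(b) = \gamma(a) \imp \gamma(b)$ for all $a,b \in A$. Applying this with $\gamma = t^\mathbf{A}$ is exactly what is needed: it converts nucleus-hood of the interpreted term into a single equational condition on $\mathbf{A}$, so that no separate verification of the inflationary, monotone, and idempotent laws, nor of the submultiplicativity inequality, is required.

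First I would unwind both sides using the definition of a nucleus on a variety. By definition, $t(x)$ is a nucleus on $\mathcal{V}$ precisely when $t^\mathbf{A}$ is a nucleus on $\mathbf{A}$ for every $\mathbf{A} \in \mathcal{V}$. Fixing an arbitrary $\mathbf{A} \in \mathcal{V}$ and invoking~\eqref{nuclei_equation} with $\gamma = t^\mathbf{A}$, this is equivalent to demanding that $a \imp t^\mathbf{A}(b) = t^\mathbf{A}(a) \imp t^\mathbf{A}(b)$ hold for all $a,b \in A$. Quantifying over all members of $\mathcal{V}$, the condition ``$t(x)$ is a nucleus on $\mathcal{V}$'' becomes ``for every $\mathbf{A} \in \mathcal{V}$ and all $a,b \in A$, $a \imp t^\mathbf{A}(b) = t^\mathbf{A}(a) \imp t^\mathbf{A}(b)$''.

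The remaining step is the routine but essential translation between such a universally quantified pointwise equality and satisfaction of an identity by a variety. By the definition of $\vDash$, an identity holds in a class of algebras if and only if it holds under every assignment of its variables in every member; hence the displayed condition is exactly the assertion that $\mathcal{V} \vDash t(x) \imp t(y) \approx x \imp t(y)$, reading the assignment $x \mapsto a$, $y \mapsto b$. Chaining these equivalences in both directions yields the proposition.

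There is no genuine obstacle here beyond invoking the right tool. The only point requiring care is that~\eqref{nuclei_equation} is a biconditional valid for \emph{arbitrary} maps, so that all the defining clauses of a nucleus are absorbed into the single equational clause $a \imp \gamma(b) = \gamma(a) \imp \gamma(b)$; once this is kept in mind, both implications are immediate and the argument is entirely symmetric in its use of the characterization.
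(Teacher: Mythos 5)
Your argument is correct and is exactly the paper's route: the paper states this proposition as an immediate consequence of the characterization in~\eqref{nuclei_equation}, applying it with $\gamma = t^\mathbf{A}$ for each $\mathbf{A} \in \mathcal{V}$ and reading the universally quantified pointwise equality as satisfaction of the identity $t(x) \imp t(y) \approx x \imp t(y)$. Your added remark that~\eqref{nuclei_equation} holds for arbitrary maps, so no closure-operator axioms need separate verification, is precisely the point the paper relies on implicitly.
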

	
	By the previous proposition, for every term $t(x)$ there exists a variety $\mathcal{V}$ such that $t(x)$ is a nucleus on $\mathcal{V}$. It suffices to consider the variety $\mathcal{V}_t$ defined, relative to $\mathcal{RL}$, by the identity $t(x) \imp t(y) \approx x \imp t(y)$.
	
	Notice that $\mathcal{V} \vDash t(\bot) \approx \bot$ or $\mathcal{V} \vDash t(\bot) \approx \top$	for each variety of residuated lattices $\mathcal{V}$ and every unary term $t(x)$, because $\lbrace \bot^\mathbf{A}, \top^\mathbf{A} \rbrace$ is a subuniverse of $\mathbf{A}$ for all $\mathbf{A} \in \mathcal{V}$. Taking into account this remark we obtain the next result.
	\begin{proposition}\label{nuclei_term_interval}
		Let $\mathcal{V} \subseteq \mathcal{RL}$ be a variety and $t(x)$ a nucleus on $\mathcal{V}$.
		\begin{enumerate}[label=\rm{(\roman*)}]
			\item If $\mathcal{V} \vDash t(\bot) \approx \top$, then $\mathcal{V} \vDash t(x) \approx \top$.
			\item If $\mathcal{V} \vDash t(\bot) \approx \bot$, then $\mathcal{V} \vDash (x \imp t(x)) \land (t(x) \imp \neg\neg x) \approx \top$.
		\end{enumerate}
	\end{proposition}
	\begin{proof}
		Let $\mathbf{A} := \mathbf{F}_{\mathcal{V}}(x)$ be the free one-generated algebra in $\mathcal{V}$ and suppose that $\mathcal{V} \vDash t(\bot) \approx \top$. Given that $\mathbf{A} \in \mathcal{V}$ and $t(x)$ is a nucleus on $\mathcal{V}$, $\top^\mathbf{A} = t^\mathbf{A}(\bot^\mathbf{A}) \leq t^\mathbf{A}(a)$ for all $a \in A$. Therefore $\mathbf{A} \vDash t(x) \approx \top$. Moreover, $\mathcal{V} \vDash t(x) \approx \top$. Now suppose that $\mathcal{V} \vDash t(\bot) \approx \bot$. Using Lemma~\ref{nuclei_properties1} we obtain that $\mathbf{A} \vDash (x \imp t(x)) \land (t(x) \imp \neg\neg x) \approx \top$. Thus, $\mathcal{V}$ satisfies the same identity.
	\end{proof}
	
	The terms $x$, $\neg\neg x$ and $\top$ are classic examples of nuclei on any variety of residuated lattices. A term $t(x)$ will be called a \emph{trivial nucleus} on a variety $\mathcal{V}$ if $\mathcal{V} \vDash t(x) \approx s(x)$ for some $s(x) \in \lbrace x, \neg\neg x, \top \rbrace$. On the other hand, a term $t(x)$ will be a \emph{nontrivial nucleus} on a variety $\mathcal{V}$ if $t(x)$ is a nucleus on $\mathcal{V}$ such that $\mathcal{V} \nvDash t(x) \approx s(x)$ for all $s(x) \in \lbrace x, \neg\neg x, \top \rbrace$. Notice that $\mathcal{V} \nvDash t(x) \approx \top$ is equivalent to $\mathcal{V} \vDash t(\bot) \approx \bot$. Moreover, $t(x)$ is a nontrivial nucleus on $\mathcal{V}$ if and only if $t(x)$ is a nucleus on $\mathcal{V}$, $\mathcal{V} \vDash t(\bot) \approx \bot$ and there exist $\mathbf{A}$, $\mathbf{B} \in \mathcal{V}$ and $a \in A$, $b \in B$ such that $a < t^\mathbf{A}(a)$ and $t^\mathbf{B}(b) < \neg\neg b$.
	
	A nucleus $t(x)$ on a variety $\mathcal{V}$ can be seen as an element of the free one-generated algebra on $\mathcal{V}$, $\mathbf{F}_\mathcal{V}(x)$. By Proposition~\ref{nuclei_term_interval}, it is equal to the last element or it lies in the interval $[x, \neg\neg x]$. A good description of the interval $[x, \neg\neg x]$ in $\mathbf{F}_\mathcal{V}(x)$ may be a useful tool for finding all nuclei over a variety $\mathcal{V}$. Unfortunately, in most cases, this description can be a nontrivial task. We follow up with some examples. 
	
	Let $\mathcal{IRL}$ be the subvariety of $\mathcal{RL}$ defined by the identity $x \approx \neg\neg x$. Algebras of $\mathcal{IRL}$ are known as \emph{involutive residuated lattices}. A very well-known and studied example of a subvariety of $\mathcal{IRL}$ is the variety of MV-algebras, that is, the variety of involutive BL-algebras. The following result is an immediate consequence of the definition of $\mathcal{IRL}$.
	\begin{proposition}\label{nuclei_involutive_varieties}
		Let $\mathcal{V}$ be a subvariety of $\mathcal{IRL}$. Then $t(x)$ is a nucleus on $\mathcal{V}$ if and only if $\mathcal{V} \vDash t(x) \approx s(x)$ for some $s(x) \in \lbrace x, \top \rbrace$ (equivalently $\lbrace \neg\neg x, \top \rbrace$). 
	\end{proposition}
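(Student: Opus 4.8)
The plan is to exploit Proposition~\ref{nuclei_term_interval} together with the defining identity of $\mathcal{IRL}$, which forces the interval $[x, \neg\neg x]$ to collapse to a single point. The key observation is that since $\mathcal{V} \subseteq \mathcal{IRL}$, we have $\mathcal{V} \vDash \neg\neg x \approx x$, so whenever a term $t(x)$ is squeezed between $x$ and $\neg\neg x$ in every member of $\mathcal{V}$, it must in fact equal $x$.

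I would dispatch the easy direction first. The terms $x$ and $\top$ are nuclei on every variety of residuated lattices, and $\neg\neg x$ is as well; moreover, because $\mathcal{V} \vDash x \approx \neg\neg x$, any term satisfying $\mathcal{V} \vDash t(x) \approx s(x)$ with $s(x) \in \lbrace x, \top \rbrace$ (equivalently $s(x) \in \lbrace \neg\neg x, \top \rbrace$) is automatically a nucleus on $\mathcal{V}$. This handles the ``if'' part and also explains the parenthetical equivalence of the two sets of representatives.

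For the main direction, suppose $t(x)$ is a nucleus on $\mathcal{V}$. By the remark preceding Proposition~\ref{nuclei_term_interval}, either $\mathcal{V} \vDash t(\bot) \approx \top$ or $\mathcal{V} \vDash t(\bot) \approx \bot$, since $\lbrace \bot^{\mathbf{A}}, \top^{\mathbf{A}} \rbrace$ is a subuniverse of every $\mathbf{A} \in \mathcal{V}$. In the first case, Proposition~\ref{nuclei_term_interval}(i) yields directly $\mathcal{V} \vDash t(x) \approx \top$. In the second case, Proposition~\ref{nuclei_term_interval}(ii) gives
\begin{equation*}
	\mathcal{V} \vDash (x \imp t(x)) \land (t(x) \imp \neg\neg x) \approx \top,
\end{equation*}
which is to say $x \leq t^{\mathbf{A}}(a) \leq \neg\neg a$ for every $\mathbf{A} \in \mathcal{V}$ and $a \in A$. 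Since $\mathcal{V} \subseteq \mathcal{IRL}$ forces $\neg\neg a = a$, this sandwich collapses to $t^{\mathbf{A}}(a) = a$, whence $\mathcal{V} \vDash t(x) \approx x$. Combining the two cases gives the claim.

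There is no genuine obstacle here: the proposition is a short corollary of Proposition~\ref{nuclei_term_interval}, and the only content is recognizing that involutivity degenerates the Glivenko-type interval $[x,\neg\neg x]$. The single point worth stating carefully is the dichotomy on $t(\bot)$, so that the two sub-identities of Proposition~\ref{nuclei_term_interval} can be invoked cleanly and seen to be exhaustive.
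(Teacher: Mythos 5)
Your proof is correct and follows exactly the route the paper intends: the paper gives no explicit proof, calling the proposition ``an immediate consequence of the definition of $\mathcal{IRL}$,'' and the implicit argument is precisely your case split on $t(\bot)$ followed by the collapse of the Glivenko interval $[x,\neg\neg x]$ via Proposition~\ref{nuclei_term_interval}(ii) under the identity $x \approx \neg\neg x$. The only blemish is a typo where you write ``$x \leq t^{\mathbf{A}}(a) \leq \neg\neg a$'' instead of $a \leq t^{\mathbf{A}}(a) \leq \neg\neg a$, which does not affect the argument.
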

	
	Let $\mathcal{H}$ denote the subvariety of $\mathcal{RL}$ defined by the identity $x^2 \approx x$ (idempotence). Elements of $\mathcal{H}$ are known as \emph{Heyting algebras}. In this case there exists a well-known description of $\mathbf{F}_{\mathcal{H}}(x)$ \cite{NISHIMURA1960} and as shown in this description, the open interval $(x, \neg\neg x)$ is empty, or equivalently, $\neg\neg x$ covers $x$. Therefore, only trivial nuclei exist on $\mathcal{H}$. Moreover, each subvariety of $\mathcal{H}$ satisfies the same property.
	\begin{proposition}
		Let $\mathcal{V}$ be a subvariety of $\mathcal{H}$. Then $t(x)$ is a nucleus on $\mathcal{V}$ if and only if $\mathcal{V} \vDash t(x) \approx s(x)$ for some $s(x) \in \lbrace x, \neg\neg x, \top \rbrace$. 
	\end{proposition}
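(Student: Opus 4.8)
The plan is to prove both implications. The forward (``if'') direction is immediate: the terms $x$, $\neg\neg x$ and $\top$ are nuclei on every variety of residuated lattices (see Example~\ref{nuclei_examples}), so if $\mathcal{V} \vDash t(x) \approx s(x)$ for some $s(x) \in \lbrace x, \neg\neg x, \top \rbrace$, then $t^\mathbf{A} = s^\mathbf{A}$ is a nucleus on each $\mathbf{A} \in \mathcal{V}$, and hence $t(x)$ is a nucleus on $\mathcal{V}$. The content lies in the reverse direction, which I would reduce to the already-noted structure of the free one-generated Heyting algebra $\mathbf{F}_{\mathcal{H}}(x)$.

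So suppose $t(x)$ is a nucleus on $\mathcal{V}$. Since $\lbrace \bot^\mathbf{A}, \top^\mathbf{A} \rbrace$ is a subuniverse of every $\mathbf{A} \in \mathcal{V}$, either $\mathcal{V} \vDash t(\bot) \approx \top$ or $\mathcal{V} \vDash t(\bot) \approx \bot$. In the first case Proposition~\ref{nuclei_term_interval}(i) gives $\mathcal{V} \vDash t(x) \approx \top$, so $s(x) = \top$ works. I would therefore assume $\mathcal{V} \vDash t(\bot) \approx \bot$, in which case Proposition~\ref{nuclei_term_interval}(ii) yields $\mathcal{V} \vDash x \leq t(x) \leq \neg\neg x$. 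The obstacle here is that this squeezing holds only on $\mathcal{V}$ and not on all of $\mathcal{H}$, so the description of $\mathbf{F}_{\mathcal{H}}(x)$ cannot be applied to $t(x)$ directly. To get around this I would replace $t(x)$ by the auxiliary term
\[
t^*(x) := (x \lor t(x)) \land \neg\neg x,
\]
and check two things. On every Heyting algebra, $x \leq x \lor t(x)$ and $x \leq \neg\neg x$ give $x \leq t^*(x)$, while $t^*(x) \leq \neg\neg x$ holds by construction, so $\mathcal{H} \vDash x \leq t^*(x) \leq \neg\neg x$. On $\mathcal{V}$, the inequalities $x \leq t(x) \leq \neg\neg x$ collapse the defining expression to $t(x)$, so $\mathcal{V} \vDash t^*(x) \approx t(x)$.

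Finally I would invoke the structure of $\mathbf{F}_{\mathcal{H}}(x)$: since the open interval $(x, \neg\neg x)$ there is empty, the interval $[x, \neg\neg x]$ consists of exactly $x$ and $\neg\neg x$, and consequently any unary term $u(x)$ satisfying $\mathcal{H} \vDash x \leq u(x) \leq \neg\neg x$ must satisfy $\mathcal{H} \vDash u(x) \approx x$ or $\mathcal{H} \vDash u(x) \approx \neg\neg x$. Applying this to $u = t^*$ and then restricting to $\mathcal{V} \subseteq \mathcal{H}$ while using $\mathcal{V} \vDash t^*(x) \approx t(x)$ gives $\mathcal{V} \vDash t(x) \approx x$ or $\mathcal{V} \vDash t(x) \approx \neg\neg x$, completing the proof. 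I expect the only genuine difficulty to be the passage from the subvariety $\mathcal{V}$ to the full variety $\mathcal{H}$, which is precisely what the term $t^*$ is engineered to bridge.
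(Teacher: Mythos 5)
Your proof is correct, but it bridges the gap between $\mathcal{V}$ and $\mathcal{H}$ in the opposite direction from the paper. The paper pushes the covering relation \emph{down}: it extends $x \mapsto x$ to a surjective homomorphism $\alpha \colon \mathbf{F}_{\mathcal{H}}(x) \to \mathbf{F}_{\mathcal{V}}(x)$ fixing $x$ and $\neg\neg x$, and invokes the lattice-theoretic fact that a surjective lattice homomorphism sends a covering pair to an equal or covering pair; hence the interval $[x, \neg\neg x]$ in $\mathbf{F}_{\mathcal{V}}(x)$ is still $\lbrace x, \neg\neg x \rbrace$ (possibly degenerate), and a nucleus $t(x)$ with $t(\bot) \approx \bot$, which lies in that interval by Proposition~\ref{nuclei_term_interval}, is forced to be trivial. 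You instead lift the term \emph{up}: the clamped term $t^*(x) = (x \lor t(x)) \land \neg\neg x$ satisfies $x \leq t^*(x) \leq \neg\neg x$ over all of $\mathcal{H}$ and coincides with $t(x)$ on $\mathcal{V}$, so the Nishimura description applies to $t^*$ directly in $\mathbf{F}_{\mathcal{H}}(x)$ and the resulting identity restricts to $\mathcal{V}$. Your route is somewhat more elementary: it never mentions $\mathbf{F}_{\mathcal{V}}(x)$ and dispenses with the cover-preservation lemma, whose (easy but unproved-in-the-paper) verification the paper's argument requires. The paper's route, in exchange, yields a structural byproduct independent of any particular term, namely that the open interval $(x, \neg\neg x)$ remains empty in the free one-generated algebra of \emph{every} subvariety of $\mathcal{H}$, which fits the paper's stated program of locating nuclei inside the interval $[x, \neg\neg x]$ of $\mathbf{F}_{\mathcal{V}}(x)$. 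Both arguments rest on the same external input, the Nishimura description of $\mathbf{F}_{\mathcal{H}}(x)$; your clamping trick is a reusable device whenever the relevant interval is understood only in a larger variety, and your treatment of the remaining cases ($t(\bot) \approx \top$ forcing $t(x) \approx \top$ via Proposition~\ref{nuclei_term_interval}, and the converse implication via Example~\ref{nuclei_examples}) matches the paper's setup.
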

	\begin{proof}
		Observe that there exists a surjective homomorphism from $\mathbf{F}_\mathcal{H}(x)$ onto $\mathbf{F}_\mathcal{V}(x)$: the partial map $x \mapsto x$ can be extended to a surjective homomorphism $\alpha \colon \mathbf{F}_\mathcal{H}(x) \to \mathbf{F}_\mathcal{V}(x)$, as a consequence of the universal mapping property of free algebras. Moreover, $\alpha(x) = x$ and $\alpha(\neg\neg x) = \neg\neg x$. Furthermore, for each surjective lattice homomorphism $f \colon \mathbf{L}_1 \to \mathbf{L}_2$, if $b$ covers $a$, then $f(a) = f(b)$ or $f(b)$ covers $f(a)$. 
	\end{proof}
	
	An example and a well-known subvariety of $\mathcal{H}$ is the variety of \emph{G\"odel algebras} denoted by $\mathcal{G}$ and defined (relative to $\mathcal{H}$) by the identity $(x \imp y) \lor (y \imp x) \approx \top$.
	
	Until now we gave examples of varieties that only have trivial nuclei. There exist several varieties with nontrivial nuclei, as we will show with the next example and in the following section. Given a residuated lattice $\mathbf{A}$, let $\mathsf{V}(\mathbf{A})$ denote the variety generated by $\mathbf{A}$. Notice that $t(x)$ is a nucleus on $\mathsf{V}(\mathbf{A})$ if and only if $t^\mathbf{A}$ is a nucleus on $\mathbf{A}$.  Moreover, $\mathsf{V}(\mathbf{A}) \vDash t(\bot) \approx \bot$ if and only if $t^\mathbf{A}(\bot) = \bot$.
	\begin{example}\label{nontrivial_nucleus_example}
		Let $n > 2$ and $A_n = \lbrace \bot,a_1, \dots,a_n, \top \rbrace$ with the total order $\leq$ given by $\bot < a_1 < \cdots < a_n < \top$.	If we define 
		\begin{equation*}
			x \cdot y :=
			\begin{cases}
				a_1 & \text{if $x,y \neq \bot, \top$}, \\
				\min\lbrace x,y \rbrace & \text{else,}
			\end{cases}
		\end{equation*}
		and $x \imp y := \max \lbrace z \in A: x \cdot z \leq y \rbrace$, then $\mathbf{A}_n := \langle A_n, \min, \max, \cdot, \imp, \bot, \top \rangle$ is a totally ordered residuated lattice. It can be easily shown that
		\begin{equation*}
			x \imp y :=
			\begin{cases}
				\top & \text{if $x \leq y$}, \\
				y & \text{if $x > y = \bot$ or $\top = x > y$}, \\
				a_n &  \text{if $\top > x > y > \bot$}.
			\end{cases}
		\end{equation*}
		
		If we define $t(x) := (x \imp x^2) \imp x^2$, then
		\begin{equation*}
			t^{\mathbf{A}_n}(x) = 
			\begin{cases}
				(x \imp a_1) \imp a_1 & \text{if $x \neq \bot$}, \\
				\bot & \text{if $x = \bot$},
			\end{cases}
		\end{equation*} 
		and $t^{\mathbf{A}_n}$ is a nucleus on $\mathbf{A}_n$. Therefore, $t(x)$ is a nucleus on $\mathsf{V}(\mathbf{A}_n)$ and it is nontrivial, because $a_2 < a_n = t^\mathbf{A}(a_2) < \top = \neg\neg a_2$.
	\end{example}
	
	We have given many examples of varieties with trivial and nontrivial nuclei. Nevertheless, an important question still remains unanswered: 
	
	\vspace{0.1in}
	
	\noindent\textbf{Open Problem.} Are there nontrivial nuclei on $\mathcal{RL}$?
	
	\vspace{0.1in}
	
	Since we are in lack of an answer for the previous question, we will restrict ourselves to proper subvarieties of $\mathcal{RL}$. Specifically, we will focus on varieties of BL-algebras. In this case, a very detailed study is possible. This will be the main topic of the following section.
	
	\section{Nuclei on varieties of BL-algebras}\label{sec:nuclei_on_BL}
	
	In this section we are going to study nuclei on BL-algebras and varieties of BL-algebras. The main result of this section consists of a complete description of all nuclei on an arbitrary subvariety of $\mathcal{BL}$. As we will show later, there exist varieties of BL-algebras with nontrivial nuclei. Moreover, a syntactic description of these terms is possible.
	
	We start studying properties of an arbitrary nucleus on a single BL-chain. Let $\mathbf{A}$ be nontrivial BL-chain and $\gamma$ a nucleus on $\mathbf{A}$. By Theorem~\ref{BLchains_decomposition}, there exists a total order $\langle I, \leq \rangle$ with first element $0$ and some set of nontrivial totally ordered Wajsberg hoops $\lbrace \mathbf{A}_i: i \in I \rbrace$, such that $A_i \cap A_j = \lbrace \top \rbrace$ for all $i \neq j$, $\mathbf{A}_0$ has first element $\bot^\mathbf{A}$ and $\mathbf{A} = (\bigoplus_{i \in I} \mathbf{A}_i)^+$. Consider the sets
	\begin{equation*}
		I_b := \lbrace i \in I: \mathbf{A}_i \text{ has first element} \rbrace \quad \text{and} \quad I_u := \lbrace i \in I: \mathbf{A}_i \text{ is unbounded} \rbrace.
	\end{equation*}
	
	Clearly $I_b \cap I_u = \emptyset$ and $I_b \cup I_u = I$. We start with some basic properties.
	\begin{lemma}\label{BLchains_nuclei_local_values}
		Let $i \in I$ and $a \in A_i$. The following hold:
		\begin{enumerate}[label=\rm{(\roman*)}]
			\item $\gamma(a) \in A_i$ or $\gamma(a) \in A_j \backslash \lbrace \top \rbrace$ for some $j > i$,
			\item if $a = \bot_i$ and $\gamma(a) \in A_i$, then $\gamma(a) = \min A_i \cap \gamma(A)$,
			\item if $b \in A_i \cap \gamma(A)$ and $b \leq a$, then $\gamma(a) = a$,
			\item if $A_i$ has first element $\bot_i$, then $\gamma(a) = a \lor \gamma(\bot_i)$,
			\item if $\min A_i \cap \gamma(A)$ does not exist, then $\gamma(a) = a$,
			\item if $b := \min A_i \cap \gamma(A) < \top$, then $\gamma(a) = a \lor b$,
			\item if $\min A_i \cap \gamma(A) = \top$, then $\gamma(a) = \min A_j \cap \gamma(A)$ for some $j \geq i$.
		\end{enumerate}
	\end{lemma}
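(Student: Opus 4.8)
My plan is to funnel all seven items through two tools established earlier, the sandwich inequalities of Lemma~\ref{nuclei_properties1}(vii)--(viii) and the minimum representation $\gamma(a)=\min\{\neg_b\neg_b a:b\in\gamma(A)\}$ of Proposition~\ref{nuclei_properties2}(i), together with one explicit computation of the relative double negation $\neg_b\neg_b a=(a\imp b)\imp b$ for $a\in A_i$ as $b$ ranges over the components of the ordinal sum. Using the ordinal-sum rules for $\imp$ one checks: $\neg_b\neg_b a=\top$ when $b$ lies in a component strictly below $A_i$ or $b=\top$; $\neg_b\neg_b a=b$ when $a<b<\top$ (whether $b$ is in $A_i$ above $a$ or in a higher component); and, crucially, $\neg_b\neg_b a=a$ when $b\in A_i$ and $b\le a$. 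This last identity reduces to the fact that in a totally ordered Wajsberg hoop $b\le a$ forces $(a\imp b)\imp b=a\lor b=a$; I would derive this from semi-cancellativity (Proposition~\ref{to_wajsberg_is_semi_cancellative}) together with divisibility (so $a\cdot(a\imp b)=b$) when $b$ is not the least element of the component, and from involutivity of the bounded component (a Wajsberg algebra, where $\neg\neg x=x$) when $b$ is its bottom.

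With these preliminaries items (i)--(iii) are short. Item (i) is extensivity $a\le\gamma(a)$ read against the total order of the ordinal sum: an element above $a\in A_i$ either lies in $A_i$ (including the shared top $\top$) or in a strictly higher component. Item (ii) is monotonicity: for every $c\in A_i\cap\gamma(A)$ we have $\bot_i\le c$, hence $\gamma(\bot_i)\le\gamma(c)=c$, so $\gamma(\bot_i)$, which lies in $A_i\cap\gamma(A)$ by hypothesis, is its least element. For item (iii), with $b\in A_i\cap\gamma(A)$ and $b\le a$ I would apply Lemma~\ref{nuclei_properties1}(vii) (legitimate since $\gamma(b)=b\le\gamma(a)$) to obtain $a\le\gamma(a)\le(a\imp b)\imp b$, and then invoke the Wajsberg identity $(a\imp b)\imp b=a$ to collapse the sandwich to $\gamma(a)=a$.

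Item (iv) is the first genuinely structural step. Taking $b=\bot_i$ in Lemma~\ref{nuclei_properties1}(vii) yields $a\lor\gamma(\bot_i)\le\gamma(a)\le(a\imp\gamma(\bot_i))\imp\gamma(\bot_i)$, so it suffices to show the upper bound equals $a\lor\gamma(\bot_i)$. I would split on the location of $\gamma(\bot_i)$ dictated by item (i): if $\gamma(\bot_i)\in A_i$, the upper bound is computed inside the Wajsberg hoop $A_i$ as $a\lor\gamma(\bot_i)$; if instead $\gamma(\bot_i)$ lies in a strictly higher component and $a\neq\top$, then $a\imp\gamma(\bot_i)=\top$ by the ordinal-sum rules, so the upper bound collapses to $\top\imp\gamma(\bot_i)=\gamma(\bot_i)=a\lor\gamma(\bot_i)$. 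Either way the sandwich closes, and the case $a=\top$ is trivial.

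Items (v)--(vii) I would treat uniformly through Proposition~\ref{nuclei_properties2}(i) and the value table above, which for $a\in A_i\setminus\{\top\}$ gives
\[ \gamma(a)=\min\big(\{\top\}\cup\{\,a:\exists\,b\in A_i\cap\gamma(A),\ b\le a\,\}\cup\{\,b\in\gamma(A):a<b<\top\,\}\big). \]
If $m_i:=\min(A_i\cap\gamma(A))$ exists with $m_i<\top$ (item (vi)): when $a\ge m_i$ the middle set contributes $a$, forcing $\gamma(a)=a=a\lor m_i$; when $a<m_i$ the middle set is empty and the least candidate is $m_i$, giving $\gamma(a)=m_i=a\lor m_i$. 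If $A_i\cap\gamma(A)=\{\top\}$ (item (vii)): the candidates below $\top$ are exactly the higher-component fixed points, so since lower components precede higher ones, $\gamma(a)=\min(A_j\cap\gamma(A))$ for the least $j>i$ carrying a fixed point $<\top$, and $\gamma(a)=\min(A_i\cap\gamma(A))=\top$ when there is none. Item (v) I expect to be the main obstacle, because the claim looks false if fixed points of $A_i$ accumulate strictly above some $a$. I would argue by contradiction: if some $a_0\in A_i$ had no fixed point below it in $A_i$, the middle set for $a_0$ is empty, and then the attained minimum $\gamma(a_0)$ would have to be a fixed point $\ge a_0$; a short check of its possible locations shows it would in fact be $\min(A_i\cap\gamma(A))$, contradicting non-existence. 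Hence every $a\in A_i$ has a fixed point below it, and item (iii) gives $\gamma(a)=a$. The delicate point throughout (v)--(vii) is precisely that Proposition~\ref{nuclei_properties2}(i) guarantees the displayed minimum is genuinely attained, which is what rules out pathological accumulation and makes the case analysis exhaustive.
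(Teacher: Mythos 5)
Your proposal is correct, and for the second half of the lemma it takes a genuinely different route from the paper. For items (iii) and (iv) you do essentially what the paper does: sandwich $\gamma(a)$ between $a \lor \gamma(b)$ and $(a \imp \gamma(b)) \imp \gamma(b)$ via Lemma~\ref{nuclei_properties1}(vii), collapse the upper bound with the Wajsberg join identity inside the component, and handle the case $\gamma(\bot_i) \notin A_i$ by the ordinal-sum rules for $\imp$ (the paper's proof of (iv) is exactly this case split). For items (v)--(vii), however, the paper argues item by item with the same sandwich plus convexity of $A_i \setminus \lbrace \top \rbrace$ (and omits (i), (ii), (vii) altogether), whereas you funnel everything through the minimum representation $\gamma(a) = \min \lbrace \neg_b \neg_b a : b \in \gamma(A) \rbrace$ of Proposition~\ref{nuclei_properties2}(i) together with an explicit value table for $\neg_b \neg_b a$ over the ordinal sum. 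Your table is right ($\top$ for $b$ in lower components or $b = \top$; $b$ for $a < b < \top$; $a$ for $b \in A_i$, $b \leq a$), and you correctly identify the crux: attainment of the minimum is what rules out fixed points accumulating strictly above $a$, which is precisely the content of the paper's contradiction argument in (v) --- your version of that contradiction (the attained minimum would itself be $\min(A_i \cap \gamma(A))$) is sound, as is your treatment of (vii), where attainment guarantees the least component $j > i$ carrying a fixed point below $\top$ actually exists. The trade-off: the paper's per-item arguments are shorter, while your value table gives one uniform mechanism that additionally proves the items the paper leaves to the reader, and it makes the role of Proposition~\ref{nuclei_properties2}(i) transparent. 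One stylistic simplification: your derivation of $(a \imp b) \imp b = a$ for $b \leq a$ via semi-cancellativity and divisibility, with a separate involutivity case when $b$ is the bottom of a bounded component, is heavier than needed --- the Wajsberg identity gives $(a \imp b) \imp b = (b \imp a) \imp a = \top \imp a = a$ in one line, which is what the paper's citation of $(a \imp b) \imp b = a \lor b$ amounts to.
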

	\begin{proof} $ $
		We will restrict ourselves to the proofs of (iii) to (vi). If $b \in A_i \cap \gamma(A)$ and $b \leq a$ then $b \leq \gamma(a)$ and, by Proposition~\ref{nuclei_properties1}, $a \lor b \leq \gamma(a) \leq (a \imp b) \imp b$. Moreover, since $\mathbf{A}_i$ is a Wajsberg hoop, $(a \imp b) \imp b =  a \lor b$. This concludes the proof of (iii). The proof of item (iv) is similar. In this case $a \lor \gamma(\bot_i) \leq \gamma(a) \leq (a \imp \gamma(\bot_i)) \imp \gamma(\bot_i) = a \lor \gamma(\bot_i)$. Last equality holds even if $\gamma(\bot_i)$ is not a member of $A_i$: if $a = \top$ it is trivial and if $a \neq \top$, then it suffices to apply item (i) and the definition of the ordinal sum. We now continue with the proof of item (v). Suppose that $a \in A_i \backslash \lbrace \top \rbrace$. If $a \leq b$ for all $b \in A_i \cap \gamma(A)$, then $\gamma(a) \leq b$ for all $b \in A_i \cap \gamma(A)$. By hypothesis there exists $c \in (A_i \backslash \lbrace \top \rbrace) \cap \gamma(A)$. Therefore $a \leq \gamma(a) \leq c$ and because $A_i \backslash \lbrace \top \rbrace$ is convex we conclude that $\gamma(a) \in A_i$. Consequently, $\gamma(a)$ is the first element of $A_i \cap \gamma(A)$ and we reached a contradiction. We conclude that there exists some $b \in A_i \cap \gamma(A)$ such that $b \leq a$. Thus, $\gamma(a) = a$. Finally, let us prove item (vi). If $a \in A_i \backslash \lbrace \top \rbrace$, then $a \leq \gamma(a) \leq (a \imp b) \imp b = a \lor b < \top$. By convexity of $A_i \backslash \lbrace \top \rbrace$, $\gamma(a) \in A_i$ and therefore $a \lor b \leq \gamma(a)$. 
	\end{proof}
	\begin{remark}\label{BLchains_nuclei_bounded_component_remark}
		Notice that the value of $\gamma$ on a bounded component is completely determined by the value of $\gamma$ on the first element of that component. This result will be crucial in our study of nuclei on varieties of BL-algebras. It is similar to the characterization of nuclei on MV-algebras given in Corollary~\ref{MV_nuclei}. This similarity could have been expected because of the definition ordinal sums and the equivalence between MV-algebras and bounded Wajsberg hoops (Theorem~\ref{BL-basic_term_equivalence}).
	\end{remark}
	
	We now direct our focus to nuclei on varieties of BL-algebras. Given a subvariety of $\mathcal{BL}$ we will provide a complete description of all nuclei on it. Let $\mathcal{V}$ be an arbitrary subvariety of $\mathcal{BL}$ and $t(x)$ a nucleus on $\mathcal{V}$. In view of Proposition~\ref{nuclei_term_interval} we will assume that $\mathcal{V} \vDash t(\bot) \approx \bot$. Before starting we introduce the following BL-chains:
	\begin{equation*}
		\mathbf{S}_{1,1} := (\mathbf{S}_1 \oplus \mathbf{S}_1)^+ \quad \text{and} \quad \mathbf{S}_{1, \omega} := (\mathbf{S}_1 \oplus \mathbf{S}_\omega)^+.
	\end{equation*}	
	
	As a consequence of Proposition~\ref{BL_chains_generated_subalgebras}, observe that $\mathbf{S}_{1,1} \in \mathcal{V}$ if and only if there exists $\mathbf{A} \in \mathcal{V}_{\textnormal{fsi}}$ such that $I_b \backslash \lbrace 0 \rbrace \neq \emptyset$. Analogously, $\mathbf{S}_{1, \omega} \in \mathcal{V}$ if and only if there exists $\mathbf{A} \in \mathcal{V}_{\textnormal{fsi}}$ such that $I_u \neq \emptyset$.
	Both structures will play a key role in the description of all nuclei on $\mathcal{V}$. We start with this simple result, which states that only trivial nuclei exist on both $\mathsf{V}(\mathbf{S}_{1,1})$ and $\mathsf{V}(\mathbf{S}_{1, \omega})$. 
	\begin{lemma}\label{A_1_1_A_1_omega_nuclei}
		Let $t(x)$ be a term and $\mathbf{A} \in \lbrace \mathbf{S}_{1,1}, \mathbf{S}_{1, \omega} \rbrace$. If $t^{\mathbf{A}}$ is a nucleus on $\mathbf{A}$ and $t^{\mathbf{A}}(\bot) = \bot$, then $\mathbf{A} \vDash t(x) \approx s(x)$ for some $s(x) \in \lbrace x, \neg\neg x \rbrace$.
	\end{lemma}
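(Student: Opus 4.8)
The plan is to write $\gamma := t^{\mathbf{A}}$ and to exploit two facts in tandem: first, that $\gamma$ is a nucleus with $\gamma(\bot)=\bot$, so Lemma~\ref{nuclei_properties1}(viii) gives $a \leq \gamma(a) \leq \neg\neg a$ for every $a$; and second, that $\gamma$, being a term function, commutes with every endomorphism of $\mathbf{A}$. For both structures I would first dispose of the lower component $\mathbf{A}_0 = \mathbf{S}_1 = \lbrace \bot, \top \rbrace$: since $\gamma(\bot) = \bot$, Lemma~\ref{BLchains_nuclei_local_values}(iv) yields $\gamma(a) = a \lor \gamma(\bot) = a$ on $\mathbf{A}_0$, so the entire question reduces to the behaviour of $\gamma$ on the second component.

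For $\mathbf{A} = \mathbf{S}_{1,1}$ the second component contributes a single idempotent element $m$ strictly between $\bot$ and $\top$, with $\neg\neg m = \top$. The interval bound $m \leq \gamma(m) \leq \neg\neg m = \top$ leaves exactly two options in this three-element chain: either $\gamma(m) = m$, whence $\gamma$ is the identity and $\mathbf{A} \vDash t(x) \approx x$; or $\gamma(m) = \top = \neg\neg m$, whence $\gamma = \neg\neg$ and $\mathbf{A} \vDash t(x) \approx \neg\neg x$. No further work is needed, since the component is finite.

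The real content is $\mathbf{A} = \mathbf{S}_{1,\omega}$, whose second component is the cancellative chain $\top = b^0 > b = b^1 > b^2 > \cdots$. Applying the interval bound to the generator $b$ gives $b \leq \gamma(b) \leq \neg\neg b = \top$, and since $b$ is covered by $\top$, this forces $\gamma(b) \in \lbrace b, \top \rbrace$. To propagate this single value to all of $\mathbf{A}$ I would introduce the power endomorphisms $\varphi_m \colon b^i \mapsto b^{mi}$ (fixing $\bot$ and $\top$), which are readily checked to be endomorphisms of $\mathbf{S}_{1,\omega}$ straight from the ordinal-sum definitions. Because $\gamma$ is a term function, $\gamma(b^m) = \gamma(\varphi_m(b)) = \varphi_m(\gamma(b))$ for every $m \geq 1$. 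If $\gamma(b) = b$, then $\gamma(b^m) = \varphi_m(b) = b^m$ for all $m$, so $\gamma$ is the identity and $t(x) \approx x$; if $\gamma(b) = \top$, then $\gamma(b^m) = \varphi_m(\top) = \top$ for all $m$, so together with $\gamma(\bot) = \bot$ we get $\gamma = \neg\neg$ and $t(x) \approx \neg\neg x$. As $\bot$, $\top$, and the $b^m$ exhaust $\mathbf{A}$, this finishes the argument.

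The step I expect to be the main obstacle is exactly this passage from the generator to the whole chain in the $\mathbf{S}_{1,\omega}$ case, and the reason it is delicate is instructive: one cannot argue from the nucleus structure alone. Indeed, for each finite $k \geq 1$ the ``capping'' map $a \mapsto a \lor b^k$ is a genuine nucleus on $\mathbf{S}_{1,\omega}$ that fixes $\bot$ yet is neither the identity nor $\neg\neg$, since it sends $b^{k+1}$ to $b^k$. What excludes these is precisely that $t$ is a term: the endomorphisms $\varphi_m$ force $\gamma$ to behave uniformly on all powers of the generator, an all-or-nothing rigidity that arbitrary nuclei lack. Verifying that each $\varphi_m$ respects $\imp$ across the two components (in particular $\varphi_m(\bot \imp b^i) = \top$ and $\varphi_m(b^i \imp \bot) = \bot$) is the one routine computation the proof requires.
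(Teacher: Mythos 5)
Your proposal is correct, and in the half of the lemma that carries the real content it takes a genuinely different route from the paper's. For $\mathbf{S}_{1,1}$ the two arguments are essentially the same: extensivity pins $t^{\mathbf{A}}(\bot_1)$ to $\lbrace \bot_1, \top \rbrace$, and each value yields one of the two trivial nuclei. For $\mathbf{S}_{1,\omega}$, however, the paper exploits termhood through generated subalgebras: since $\langle b^k \rangle^{\mathbf{A}} = \lbrace \bot, \top \rbrace \cup \lbrace b^{kn} : n \in \omega \rbrace$, it gets $t^{\mathbf{A}}(b^k) \in \lbrace b^k, \top \rbrace$ for \emph{every} $k$, settles the case $t^{\mathbf{A}}(b) = b$ by monotonicity, and settles the case $t^{\mathbf{A}}(b) = \top$ with the nucleus identity \eqref{nuclei_equation}, deriving the contradiction $b^{m-1} = b \imp b^m = t^{\mathbf{A}}(b) \imp b^m = b^m$ if some $b^m$ with $m > 1$ were fixed. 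You instead exploit termhood through equivariance under the power endomorphisms $\varphi_m$, propagating the single value $t^{\mathbf{A}}(b)$ to all powers at once; your $\varphi_m$ are indeed endomorphisms (the cross-component checks you list are the right ones, and preservation of $\land$ and $\lor$ is automatic since $\varphi_m$ is monotone on a chain). This handles both cases uniformly and never invokes \eqref{nuclei_equation} --- in fact it uses neither multiplicativity nor monotonicity of the nucleus, so your argument proves a slightly stronger statement: on these two chains, any \emph{term} satisfying $x \leq t^{\mathbf{A}}(x)$ and $t^{\mathbf{A}}(\bot) = \bot$ is already $x$ or $\neg\neg x$. What each route buys: the paper's subalgebra computation is machinery (Proposition~\ref{BL_chains_generated_subalgebras}, Lemma~\ref{BLchains_nuclei_local_values}) reused throughout Section~\ref{sec:nuclei_on_BL}, whereas your argument is self-contained and isolates precisely where termhood, as opposed to nuclearity, does the work.

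One slip in your closing aside: the capping map $x \mapsto x \lor b^k$ does \emph{not} fix $\bot$, since $\bot \lor b^k = b^k$. Your moral point nevertheless survives: by Theorem~\ref{nucleilocaldescription} applied with $B = \lbrace \bot, b^k \rbrace$, the map $\gamma(x) = \neg\neg x \land ((x \imp b^k) \imp b^k)$ is a nucleus on $\mathbf{S}_{1,\omega}$ that fixes $\bot$ and sends $b^{k+1}$ to $b^k$, so nuclearity alone genuinely cannot yield the lemma, exactly as you claim.
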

	\begin{proof}
		First take $\mathbf{A} := \mathbf{S}_{1,1}$ and without loss of generality assume that $A = \lbrace \bot, \bot_1, \top \rbrace$, where $\bot_1$ is the first element of the second component. Notice that $\langle \bot_1 \rangle^\mathbf{A} = A$ and $\bot_1 \leq t^\mathbf{A}(\bot_1)$. Therefore, $t^\mathbf{A}(\bot_1) \in \lbrace \bot_1, \top \rbrace$. If $t^\mathbf{A}(\bot_1) = \bot_1$, then $\mathbf{A} \vDash t(x) \approx x$. If not, then $\mathbf{A} \vDash t(x) \approx \neg\neg x$.
		
		Now consider $\mathbf{A} := \mathbf{S}_{1, \omega}$ and suppose that $A = \lbrace \bot, \top \rbrace \cup \lbrace a^k: k \in \omega \rbrace$. In this case, $\langle a^k \rangle^\mathbf{A} = \lbrace \bot, \top  \rbrace \cup \lbrace a^{kn}: n \in \omega \rbrace$ and  $t^\mathbf{A}(a^k) \in \lbrace a^k, \top \rbrace$ for all $k \in \omega$. If $t^\mathbf{A}(a) = a$, then, as a consequence of previous observation and the monotonicity of $t^\mathbf{A}$, we conclude that $\mathbf{A} \vDash t(x) \approx x$. Now suppose that $t^\mathbf{A}(a) = \top$. If there exists some $m > 1$ such that $t^\mathbf{A}(a^m) = a^m$, then, using the equality \eqref{nuclei_equation}, we conclude that $a^{m-1} = a \imp a^m = t^\mathbf{A}(a) \imp a^m = a^m$. Clearly we reached a contradiction. Therefore $t^\mathbf{A}(a^m) = \top$ for all $m > 1$. In this case, $\mathbf{A} \vDash t(x) \approx \neg\neg x$. 
	\end{proof}
	
	If $\mathbf{A} \in \mathcal{V}_{\textnormal{fsi}}$, then $\mathbf{A} = (\bigoplus_{i \in I} \mathbf{A}_i)^+$, where $\langle I, \leq \rangle$ is some total order with first element $0$ and $\lbrace \mathbf{A}_i: i \in I \rbrace$ is some set of totally ordered Wajsberg hoops such that $A_i \cap A_j = \lbrace \top \rbrace$ for all $i \neq j$ and $\mathbf{A}_0$ has first element $\bot^\mathbf{A}$. 
	\begin{lemma}\label{A_1_1_A_1_omega_I_b_I_u}
		Let $t(x)$ be a nucleus on $\mathcal{V}$ such that $\mathcal{V} \vDash t(\bot) \approx \bot$.
		\begin{enumerate}[label=\rm{(\roman*)}]
			\item $\ensuremath{\left. t^\mathbf{A} \right|_{A_0}} = \ensuremath{\left. x^\mathbf{A} \right|_{A_0}} = \ensuremath{\left. (\neg\neg x)^\mathbf{A} \right|_{A_0}}$.
			\item If $\mathbf{S}_{1,1} \in \mathcal{V}$ and $s(x) \in \lbrace x, \neg\neg x \rbrace$, then $\mathbf{S}_{1,1} \vDash t(x) \approx s(x)$ if and only if $\ensuremath{\left. t^\mathbf{A} \right|_{A_i}} = \ensuremath{\left. s^\mathbf{A} \right|_{A_i}}$ for all $i \in I_b\backslash\lbrace 0 \rbrace$.
			\item If $\mathbf{S}_{1, \omega} \in \mathcal{V}$ and $s(x) \in \lbrace x, \neg\neg x  \rbrace$, then $\mathbf{S}_{1, \omega} \vDash t(x) \approx s(x)$ if and only if $\ensuremath{\left. t^\mathbf{A} \right|_{A_i}} = \ensuremath{\left. s^\mathbf{A} \right|_{A_i}}$ for all $i \in I_u$. 
		\end{enumerate} 
	\end{lemma}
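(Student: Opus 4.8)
The plan is to analyse $t^\mathbf{A}$ one component at a time, reducing its behaviour on each component of $\mathbf{A}=(\bigoplus_{i\in I}\mathbf{A}_i)^+$ to its behaviour on a suitable one-generated subalgebra, which Proposition~\ref{BL_chains_generated_subalgebras} identifies as a copy of $\mathbf{S}_{1,1}$ or $\mathbf{S}_{1,\omega}$, and then classifying it there by Lemma~\ref{A_1_1_A_1_omega_nuclei}. For (i), since $\mathbf{A}_0$ has first element $\bot^\mathbf{A}$ and $\mathcal{V}\vDash t(\bot)\approx\bot$, we have $t^\mathbf{A}(\bot^\mathbf{A})=\bot^\mathbf{A}$, so Lemma~\ref{BLchains_nuclei_local_values}(iv) gives $t^\mathbf{A}(a)=a\lor\bot^\mathbf{A}=a$ for every $a\in A_0$, i.e. $t^\mathbf{A}|_{A_0}=x^\mathbf{A}|_{A_0}$. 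Since \eqref{BLchains_negation} shows $\neg\neg a=a$ on the initial component, the remaining equality follows.

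For (ii), fix $i\in I_b\setminus\{0\}$ and let $\bot_i=\min A_i$. As the first element of a totally ordered Wajsberg hoop, $\bot_i$ is idempotent, so Proposition~\ref{BL_chains_generated_subalgebras} (with $J=\{i\}$) yields $\langle\bot_i\rangle^\mathbf{A}\cong(\mathbf{S}_1\oplus\mathbf{S}_1)^+=\mathbf{S}_{1,1}$. Since $t$ is a nucleus on $\mathcal{V}$, its restriction is a nucleus on this copy of $\mathbf{S}_{1,1}$ fixing $\bot$, so by Lemma~\ref{A_1_1_A_1_omega_nuclei} it agrees there with the unique $s\in\{x,\neg\neg x\}$ for which $\mathbf{S}_{1,1}\vDash t\approx s$ (the two candidates disagree at $\bot_i$). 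Reading off $t^\mathbf{A}(\bot_i)$ — equal to $\bot_i$ when $s=x$ and to $\top$ when $s=\neg\neg x$ — and feeding it into Lemma~\ref{BLchains_nuclei_local_values}(iv) propagates the value to all of $A_i$, since $t^\mathbf{A}(a)=a\lor t^\mathbf{A}(\bot_i)$ is then $a=x^\mathbf{A}(a)$ or $\top=(\neg\neg x)^\mathbf{A}(a)$ for every $a\in A_i$ (using \eqref{BLchains_negation} and $i>0$). As the value of $s$ is fixed by the action of $t$ on $\mathbf{S}_{1,1}$, the same $s$ serves every $i\in I_b\setminus\{0\}$ at once, which gives the stated biconditional.

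For (iii) the component $A_i$ has no least element, so there is no single value to anchor it; this is the hard part. By Proposition~\ref{to_wajsberg_is_semi_cancellative} an unbounded $\mathbf{A}_i$ is cancellative, so every $a\in A_i\setminus\{\top\}$ satisfies $a^{n+1}<a^n$ for all $n$, and Proposition~\ref{BL_chains_generated_subalgebras} gives $\langle a\rangle^\mathbf{A}\cong(\mathbf{S}_1\oplus\mathbf{S}_\omega)^+=\mathbf{S}_{1,\omega}$; Lemma~\ref{A_1_1_A_1_omega_nuclei} then forces $t^\mathbf{A}(a)\in\{a,\top\}$ for each such $a$. The crux is to show this choice is uniform over the component. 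Suppose some $a_0\in A_i\setminus\{\top\}$ is fixed. For $a\ge a_0$, Lemma~\ref{nuclei_image_characterization} applied to the fixed point $a_0$ gives $t^\mathbf{A}(a)\le(a\imp a_0)\imp a_0$, and the Wajsberg identity $(a\imp a_0)\imp a_0=a\lor a_0=a$ forces $t^\mathbf{A}(a)=a$; for $a<a_0$, monotonicity gives $t^\mathbf{A}(a)\le t^\mathbf{A}(a_0)=a_0<\top$, so the dichotomy forces $t^\mathbf{A}(a)=a$ as well. Hence either some element of $A_i\setminus\{\top\}$ is fixed and $t^\mathbf{A}|_{A_i}=x^\mathbf{A}|_{A_i}$, or $t^\mathbf{A}(a)=\top$ for all $a\in A_i\setminus\{\top\}$ and $t^\mathbf{A}|_{A_i}=(\neg\neg x)^\mathbf{A}|_{A_i}$. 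Matching this uniform value with the $s$ for which $\mathbf{S}_{1,\omega}\vDash t\approx s$, via $\langle a\rangle^\mathbf{A}\cong\mathbf{S}_{1,\omega}$ exactly as in (ii), yields the biconditional.

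The main obstacle is precisely this uniformity step: in the bounded case Lemma~\ref{BLchains_nuclei_local_values}(iv) lets the single value $t^\mathbf{A}(\bot_i)$ dictate the entire component, whereas without a bottom element one must independently exclude \emph{mixed} behaviour, in which a fixed element lies below a $\gamma$-dense one. The combination of Lemma~\ref{nuclei_image_characterization} with the Wajsberg identity $(x\imp y)\imp y\approx x\lor y$ is what rules this out, and I expect the rest of the argument to be routine propagation.
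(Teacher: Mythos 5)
Your proposal is correct, and for items (i) and (ii) it is essentially the paper's own argument: the paper gets (i) immediately from \eqref{BLchains_negation} together with the sandwich $x \leq t(x) \leq \neg\neg x$ of Proposition~\ref{nuclei_term_interval} (your variant via $t^\mathbf{A}(\bot)=\bot$ and Lemma~\ref{BLchains_nuclei_local_values}(iv) is equivalent), and it proves (ii) by the chain $\mathbf{S}_{1,1} \vDash t(x) \approx s(x) \Leftrightarrow \langle \bot_i \rangle^\mathbf{A} \vDash t(x) \approx s(x) \Leftrightarrow t^\mathbf{A}(\bot_i) = s^\mathbf{A}(\bot_i) \Leftrightarrow \left. t^\mathbf{A} \right|_{A_i} = \left. s^\mathbf{A} \right|_{A_i}$, i.e.\ exactly your combination of Proposition~\ref{BL_chains_generated_subalgebras} and Lemma~\ref{BLchains_nuclei_local_values}(iv). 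The genuine divergence is in (iii), where you single out uniformity over an unbounded component as ``the crux'' and prove it by hand. The paper needs no such step: it runs the same equivalence chain \emph{per element}, namely $\mathbf{S}_{1,\omega} \vDash t(x) \approx s(x) \Leftrightarrow \langle a \rangle^\mathbf{A} \vDash t(x) \approx s(x)$ for every $a \in A_i \setminus \lbrace \top \rbrace$ (each $\langle a \rangle^\mathbf{A} \cong \mathbf{S}_{1,\omega}$ by cancellativity and Proposition~\ref{BL_chains_generated_subalgebras}) $\Leftrightarrow t^\mathbf{A}(a) = s^\mathbf{A}(a)$ for all $a \in A_i$. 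Since the lemma only asserts agreement with one \emph{fixed} $s$, satisfaction of $t(x) \approx s(x)$ in $\mathbf{S}_{1,\omega}$ transfers through each isomorphism and pins $t^\mathbf{A}(a)$ at every $a$ individually, so mixed behaviour is excluded automatically, with no appeal to Lemma~\ref{nuclei_image_characterization}; the possibility you worried about simply never has to be refuted to establish the stated biconditional. Your uniformity argument is nonetheless sound --- indeed its $a \geq a_0$ half is literally Lemma~\ref{BLchains_nuclei_local_values}(iii) with $b = a_0 = t^\mathbf{A}(a_0)$, so you are partly re-deriving machinery the paper already cites --- and what it buys is a self-contained dichotomy for an arbitrary nucleus on an unbounded component, independent of any hypothesis on $\mathbf{S}_{1,\omega}$; what the paper's route buys is brevity, and it delivers both directions of the biconditional at once (your ``matching'' step leaves the backward direction implicit, though it is immediate since $\mathbf{S}_{1,1}$ and $\mathbf{S}_{1,\omega}$ are themselves members of $\mathcal{V}_{\textnormal{fsi}}$ once they lie in $\mathcal{V}$).
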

	\begin{proof}
		Item (i) is an immediate consequence of equality \eqref{BLchains_negation} and Proposition~\ref{nuclei_term_interval}. Now take $s(x) \in \lbrace x, \neg\neg x  \rbrace$. Taking into consideration the hypothesis, Proposition~\ref{BL_chains_generated_subalgebras} and Lemma~\ref{BLchains_nuclei_local_values}, if $\mathbf{S}_{1,1} \in \mathcal{V}$, then
		\begin{equation*}
			\begin{aligned}
				\mathbf{S}_{1,1} \vDash t(x) \approx s(x) & \Leftrightarrow \langle \bot_i \rangle^\mathbf{A} \vDash t(x) \approx s(x) \\
				& \Leftrightarrow t^\mathbf{A}(\bot_i) = s^\mathbf{A}(\bot_i) \\
				& \Leftrightarrow \ensuremath{\left. t^\mathbf{A} \right|_{A_i}} = \ensuremath{\left. s^\mathbf{A} \right|_{A_i}}
			\end{aligned}
		\end{equation*}
		for all $i \in I_b\backslash\lbrace 0 \rbrace$. Furthermore, if $\mathbf{S}_{1, \omega} \in \mathcal{V}$, then
		\begin{equation*}
			\begin{aligned}
				\mathbf{S}_{1, \omega} \vDash t(x) \approx s(x) & \Leftrightarrow \langle a \rangle^\mathbf{A} \vDash t(x) \approx s(x) \ \forall \ a \in A_i \backslash \lbrace \top \rbrace \\
				& \Leftrightarrow t^\mathbf{A}(a) = s^\mathbf{A}(a) \ \forall \ a \in A_i \\ 
				& \Leftrightarrow \ensuremath{\left. t^\mathbf{A} \right|_{A_i}} = \ensuremath{\left. s^\mathbf{A} \right|_{A_i}}
			\end{aligned}
		\end{equation*}
		for all $i \in I_u$. 
	\end{proof}
	
	The previous lemma states that the value of $t(x)$ on a bounded component different from the first one (unbounded component) is completely determined by the value of $t(x)$ on $\mathbf{S}_{1,1}$ ($\mathbf{S}_{1, \omega}$), if such algebra is a member of the variety. Combining this with Lemma~\ref{A_1_1_A_1_omega_nuclei} will help us describe all nuclei on $\mathcal{V}$. We start with some simple cases.
	\begin{proposition} 
		Let $t(x)$ be a nucleus on $\mathcal{V}$ such that $\mathcal{V} \vDash t(\bot) \approx \bot$.
		\begin{enumerate}[label=\rm{(\roman*)}]
			\item If $\mathbf{S}_{1,1}$, $\mathbf{S}_{1, \omega} \notin \mathcal{V}$, then $\mathcal{V} \vDash t(x) \approx x$.
			\item If $\mathbf{S}_{1,1} \in \mathcal{V}$, $\mathbf{S}_{1, \omega} \notin \mathcal{V}$, then $\mathcal{V} \vDash t(x) \approx s(x)$ for some $s(x) \in \lbrace x, \neg\neg x  \rbrace$.
			\item If $\mathbf{S}_{1, \omega} \in \mathcal{V}$, $\mathbf{S}_{1,1} \notin \mathcal{V}$, then $\mathcal{V} \vDash t(x) \approx s(x)$ for some $s(x) \in \lbrace x, \neg\neg x  \rbrace$.
		\end{enumerate}
	\end{proposition}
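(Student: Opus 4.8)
The plan is to reduce each identity to a verification on BL-chains and then to pin down $t^{\mathbf{A}}$ componentwise via the ordinal-sum decomposition. Since $\mathcal{V}_{\textnormal{fsi}}$ generates $\mathcal{V}$ as a variety, it suffices to prove that $\mathbf{A} \vDash t(x) \approx s(x)$ for every BL-chain $\mathbf{A} \in \mathcal{V}_{\textnormal{fsi}}$, with $s$ the appropriate member of $\lbrace x, \neg\neg x \rbrace$. Writing $\mathbf{A} = (\bigoplus_{i \in I} \mathbf{A}_i)^+$ with the index sets $I_b$ and $I_u$ as above, I would determine $t^{\mathbf{A}}$ on each component separately. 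On $A_0$ it is fixed by Lemma~\ref{A_1_1_A_1_omega_I_b_I_u}(i), which gives $t^{\mathbf{A}}|_{A_0} = x^{\mathbf{A}}|_{A_0} = (\neg\neg x)^{\mathbf{A}}|_{A_0}$, so the first component already agrees with either choice of $s$.

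For the remaining components I would invoke the membership criteria recalled before Lemma~\ref{A_1_1_A_1_omega_nuclei}: $\mathbf{S}_{1,1} \in \mathcal{V}$ exactly when some chain in $\mathcal{V}_{\textnormal{fsi}}$ has a bounded component beyond the first (that is, $I_b \backslash \lbrace 0 \rbrace \neq \emptyset$), and $\mathbf{S}_{1,\omega} \in \mathcal{V}$ exactly when some chain has an unbounded component ($I_u \neq \emptyset$). The three hypotheses then translate into structural restrictions on the chains: in (i) every $\mathbf{A} \in \mathcal{V}_{\textnormal{fsi}}$ has $I = \lbrace 0 \rbrace$ and is thus a single bounded Wajsberg hoop, so $t^{\mathbf{A}} = x^{\mathbf{A}}$ on all of $A$ by the first-component computation, giving $\mathcal{V} \vDash t(x) \approx x$; in (ii) every chain satisfies $I_u = \emptyset$, so only bounded components occur; and in (iii) every chain satisfies $I_b \backslash \lbrace 0 \rbrace = \emptyset$, so the only bounded component is the first one and all others are unbounded.

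In cases (ii) and (iii) the key step is to fix a single $s(x) \in \lbrace x, \neg\neg x \rbrace$ globally by applying Lemma~\ref{A_1_1_A_1_omega_nuclei} to the relevant generator: in (ii) it forces $\mathbf{S}_{1,1} \vDash t(x) \approx s(x)$ for some such $s$, and in (iii) it forces $\mathbf{S}_{1,\omega} \vDash t(x) \approx s(x)$ for some such $s$. With this $s$ fixed, Lemma~\ref{A_1_1_A_1_omega_I_b_I_u}(ii) (respectively~(iii)) propagates the agreement to every nontrivial component, yielding $t^{\mathbf{A}}|_{A_i} = s^{\mathbf{A}}|_{A_i}$ for all $i \in I_b \backslash \lbrace 0 \rbrace$ (respectively $i \in I_u$). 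Combined with the first-component computation, this gives $t^{\mathbf{A}} = s^{\mathbf{A}}$ on each $A_i$, hence $\mathbf{A} \vDash t(x) \approx s(x)$ for every chain, and therefore $\mathcal{V} \vDash t(x) \approx s(x)$.

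I expect the main obstacle to be organizational rather than computational: the genuine work has already been carried out in Lemmas~\ref{A_1_1_A_1_omega_nuclei} and~\ref{A_1_1_A_1_omega_I_b_I_u}, and the delicate point is ensuring that the single value $s$ extracted from the one chain $\mathbf{S}_{1,1}$ or $\mathbf{S}_{1,\omega}$ is forced to be the same across all components of all chains in $\mathcal{V}_{\textnormal{fsi}}$. This is precisely what the hypotheses secure by excluding the competing type of component (no unbounded components in (ii), no non-initial bounded components in (iii)), so that no conflicting constraint on $s$ can arise.
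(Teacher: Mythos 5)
Your proposal is correct and follows essentially the same route as the paper: reduce to $\mathcal{V}_{\textnormal{fsi}}$, translate the hypotheses into structural restrictions on $I_b$ and $I_u$ via the membership criteria for $\mathbf{S}_{1,1}$ and $\mathbf{S}_{1,\omega}$, fix a single $s(x)$ using Lemma~\ref{A_1_1_A_1_omega_nuclei}, and propagate it componentwise by Lemma~\ref{A_1_1_A_1_omega_I_b_I_u}. The only cosmetic difference is in case (i), where the paper observes $\mathcal{V}_{\textnormal{fsi}} \subseteq \mathcal{MV}$ and cites Proposition~\ref{nuclei_involutive_varieties}, whereas you get $t^{\mathbf{A}} = x^{\mathbf{A}}$ directly from the first-component identity in Lemma~\ref{A_1_1_A_1_omega_I_b_I_u}(i); both steps are immediate and rest on the same fact.
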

	\begin{proof}
		First assume that $\mathbf{S}_{1,1}$, $\mathbf{S}_{1, \omega} \notin \mathcal{V}$. In this case $I = \lbrace 0 \rbrace$ for all $\mathbf{A} \in \mathcal{V}_{\textnormal{fsi}}$. Therefore, $\mathcal{V}_{\textnormal{fsi}}$ is contained in $\mathcal{MV}$. By Proposition~\ref{nuclei_involutive_varieties} we obtain the desired result. Now suppose that $\mathbf{S}_{1,1} \in \mathcal{V}$, $\mathbf{S}_{1, \omega} \notin \mathcal{V}$. By Lemma~\ref{A_1_1_A_1_omega_nuclei}, $\mathbf{S}_{1,1} \vDash t(x) \approx s(x)$ where $\lbrace x, \neg\neg x  \rbrace$. Moreover, $I = I_b$ for all $\mathbf{A} \in \mathcal{V}_{\textnormal{fsi}}$. By Lemma~\ref{A_1_1_A_1_omega_I_b_I_u}, if $\mathbf{S}_{1,1} \vDash t(x) \approx s(x)$, then $\mathbf{A} \vDash t(x) \approx s(x)$ for all $\mathbf{A} \in \mathcal{V}_{\textnormal{fsi}}$. We proved the desired result. If $\mathbf{S}_{1, \omega} \in \mathcal{V}$, $\mathbf{S}_{1,1} \notin \mathcal{V}$ then the proof is analogous to previous case. Observe that, if $\mathbf{S}_{1,1} \notin \mathcal{V}$, then $I = \lbrace 0 \rbrace \cup I_u$ for all $\mathbf{A} \in \mathcal{V}_{\textnormal{fsi}}$. 
	\end{proof}
	
	We already know that $\mathbf{S}_{1, \omega} \in \mathcal{V}$ if and only if $I_u \neq \emptyset$ for some $\mathbf{A} \in \mathcal{V}_{\textnormal{fsi}}$. This is due to unbounded components being cancellative. Nevertheless, $\mathbf{S}_{1, \omega}$ can also be obtained as a result of generating a subalgebra with an element of a bounded component. As an example, consider $\mathbf{A} = (\mathbf{S}_1 \oplus \mathbf{S}_1^\omega)^+$ and $a = (1,-n) \in S_1^\omega$ with $n \geq 1$. Then $a^{k+1} < a^k$ for all $k \in \omega$ and by Proposition~\ref{BL_chains_generated_subalgebras}, $\langle a \rangle^\mathbf{A} \cong \mathbf{S}_{1, \omega}$. In regards to this observation we introduce the following property for $\mathcal{V}$:
	\begin{equation*}
		\text{(T)} \colon \exists \ \mathbf{A} \in \mathcal{V}_{\textnormal{fsi}}, \ i \in I_b \backslash\lbrace 0 \rbrace, \ a \in A_i \mid a^{n+1} < a^n \text{ for all $n \geq 0$}.
	\end{equation*}
	
	Note that this property is equivalent to the following:
	\begin{equation*}
		\text{(T')} \colon \exists \ \mathbf{A} \in \mathcal{V}_{\textnormal{fsi}}, \ i \in I_b \backslash\lbrace 0 \rbrace, \ a \in A_i \mid \langle a \rangle^\mathbf{A} \cong \mathbf{S}_{1, \omega}
	\end{equation*}
	
	To get a better understanding about this property we define
	\begin{equation*}
		\mathbf{S}_{1,_1^\omega} := (\mathbf{S}_1 \oplus \mathbf{S}_1^\omega)^+.
	\end{equation*}  
	\begin{proposition}\label{propertyT_equivalence}
		$\mathcal{V}$ satisfies property $(T)$ if and only if $\mathbf{S}_{1,_1^\omega} \in \mathcal{V}$.
	\end{proposition}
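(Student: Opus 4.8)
The plan is to prove the two implications of this biconditional separately, using Proposition~\ref{BL_chains_generated_subalgebras} to pass between the abstract property $(T)$ and the concrete membership $\mathbf{S}_{1,_1^\omega} \in \mathcal{V}$. The easy direction is $(\Leftarrow)$: if $\mathbf{S}_{1,_1^\omega} = (\mathbf{S}_1 \oplus \mathbf{S}_1^\omega)^+ \in \mathcal{V}$, then since this algebra is itself a BL-chain it lies in $\mathcal{V}_{\textnormal{fsi}}$, and I take it as the witness $\mathbf{A}$. Its second component is $\mathbf{S}_1^\omega$, which has a first element (so the index $1$ of this component lies in $I_b \setminus \{0\}$), and I exhibit the explicit element $a := (1,-1) \in S_1^\omega$. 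By the definition of the product on $\mathbf{S}_1^\omega$ one computes $a^{n} = (1,-n)$, so $a^{n+1} = (1,-(n+1)) < (1,-n) = a^n$ for all $n \geq 0$, which is exactly the requirement in $(T)$.

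**For the forward direction** $(\Rightarrow)$ I would assume $(T)$ holds, so there exist $\mathbf{A} \in \mathcal{V}_{\textnormal{fsi}}$, an index $i \in I_b \setminus \{0\}$, and an element $a \in A_i$ with $a^{n+1} < a^n$ for all $n \geq 0$. The goal is to produce a copy of $\mathbf{S}_{1,_1^\omega}$ inside some member of $\mathcal{V}$, whence $\mathbf{S}_{1,_1^\omega} \in \mathcal{V}$ since varieties are closed under subalgebras. The natural candidate is the subalgebra of $\mathbf{A}$ generated by $a$ together with the first element $\bot_i$ of the bounded component $\mathbf{A}_i$. Intuitively, this generated subalgebra should be $(\mathbf{S}_1 \oplus \mathbf{S}_1^\omega)^+$: the first component $\mathbf{S}_1$ comes from $\{\bot^\mathbf{A}, \top\}$ (the bottom two-element Boolean component forced by $0 \in I$), while the $\mathbf{S}_1^\omega$ in the second slot arises as the ordinal sum of the Wajsberg-hoop generated by $a$ inside $\mathbf{A}_i$ (which is infinite and cancellative, of type $\mathbf{S}_\omega$) together with the idempotent $\bot_i$ sitting below it. The subtlety is that Proposition~\ref{BL_chains_generated_subalgebras} as stated requires $0 \notin J$ and, in the case $a^{n+1} < a^n$, yields the component $\mathbf{S}_\omega$ rather than $\mathbf{S}_1^\omega$; so I must argue that incorporating the bottom element $\bot_i$ of the bounded component $A_i$ glues an extra idempotent onto the bottom of the $\mathbf{S}_\omega$-chain, turning it into $\mathbf{S}_1^\omega$.

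**The main obstacle** is precisely this last gluing argument: I must verify that $\langle \{a, \bot_i\} \rangle^{\mathbf{A}_i}$, the subhoop of the Wajsberg hoop $\mathbf{A}_i$ generated by $a$ and its bottom $\bot_i$, is isomorphic to $\mathbf{S}_1^\omega$. By Proposition~\ref{to_wajsberg_is_semi_cancellative} the hoop $\mathbf{A}_i$ is bounded (since $i \in I_b$), hence semi-cancellative; the chain $a > a^2 > a^3 > \cdots$ is strictly decreasing and its elements, being above $\bot_i$, generate a cancellative-looking tail of type $\mathbf{S}_\omega$, and adjoining $\bot_i = \bot_i^2$ at the bottom produces exactly the ordinal-sum structure $\mathbf{S}_\omega \oplus \mathbf{S}_1$ — which, after the standard identification, is the Wajsberg hoop $\mathbf{S}_1^\omega$ from the Example. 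I would confirm the operations match by comparing the products $a^j \cdot \bot_i$ and the implications $a^j \imp \bot_i$, $\bot_i \imp a^j$ against the defining tables of $\mathbf{S}_1^\omega$: the point is that $\bot_i$ absorbs multiplicatively on the cancellative tail (giving the $(0, j-k)$ and $(0,0)$ cases), matching the displayed formulas for $\mathbf{S}_1^\omega$. Once this local isomorphism $\langle \{a, \bot_i\}\rangle^{\mathbf{A}_i} \cong \mathbf{S}_1^\omega$ is established, Proposition~\ref{subhoops_of_ordinal_sum} assembles $\langle \{a, \bot_i, \bot^\mathbf{A}\} \rangle^{\mathbf{A}} \cong (\mathbf{S}_1 \oplus \mathbf{S}_1^\omega)^+ = \mathbf{S}_{1,_1^\omega}$, completing the proof.
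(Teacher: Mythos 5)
Your backward direction is fine and matches the paper's (which dismisses it as immediate), and your overall strategy for the forward direction --- generate a subalgebra of $\mathbf{A}$ from $a$ and $\bot_i$, identify it with $\mathbf{S}_{1,_1^\omega}$, and invoke closure of varieties under subalgebras via Proposition~\ref{subhoops_of_ordinal_sum} --- is exactly the paper's. But the key step fails as you have set it up: you claim the subhoop of $\mathbf{A}_i$ generated by $\lbrace a, \bot_i \rbrace$ has universe $\lbrace a^n : n \in \omega \rbrace \cup \lbrace \bot_i \rbrace$ (a strictly decreasing tail with one idempotent glued at the bottom), isomorphic to an ordinal sum of $\mathbf{S}_\omega$ and $\mathbf{S}_1$ ``after the standard identification.'' This set is not closed under $\imp$: the element $a \imp \bot_i$ must lie in any subuniverse containing $a$ and $\bot_i$, and it is none of $\bot_i$, $\top$, or a power of $a$. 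Indeed, since $\mathbf{A}_i$ is a bounded Wajsberg hoop, $a \imp \bot_i = \bot_i$ would force $a = \top$, while $a \imp \bot_i = a^m$ would give $a^{m+1} = a^m \cdot a \leq \bot_i$, contradicting $a^{n+1} < a^n$ for all $n$. Moreover, the claimed identification is impossible on general grounds: a subhoop of the Wajsberg hoop $\mathbf{A}_i$ is again Wajsberg, whereas a nontrivial ordinal sum never satisfies the Wajsberg identity (take $x$ in the lower component and $y$ in the upper: $(x \imp y) \imp y = y$ but $(y \imp x) \imp x = \top$). The order types also differ: your candidate is $1 + \omega^\ast$, while $\mathbf{S}_1^\omega$ has type $\omega + \omega^\ast$, with an infinite ascending tail at the bottom.

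That ascending tail is precisely what you are missing: it consists of the relative negations $a^n \imp \bot_i$. The paper's proof takes $B := \lbrace a^n : n \in \omega \rbrace \cup \lbrace a^n \imp \bot_i : n \in \omega \rbrace$ and verifies by direct computation --- using that $(b \imp \bot_i) \imp \bot_i = b$ for all $b \in A_i$ --- that $B$ is closed under $\cdot$ and $\imp$ (e.g.\ $a^m \cdot (a^n \imp \bot_i) = (a^m \imp a^n) \imp \bot_i$ and $(a^m \imp \bot_i) \cdot (a^n \imp \bot_i) = \bot_i$), and that $a^n \mapsto (1,-n)$, $a^n \imp \bot_i \mapsto (0,n)$ is an isomorphism of $\mathbf{B}$ with $\mathbf{S}_1^\omega$; only then does Proposition~\ref{subhoops_of_ordinal_sum} yield $(\mathbf{S}_1 \oplus \mathbf{B})^+ \cong \mathbf{S}_{1,_1^\omega}$ as a subalgebra of $\mathbf{A}$. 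So your proposal needs its central lemma replaced: enlarge the candidate universe to include the elements $a^n \imp \bot_i$ and carry out the closure computations, rather than appealing to a gluing of $\bot_i$ onto the $\mathbf{S}_\omega$-chain of powers, which produces the wrong algebra.
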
	
	\begin{proof} $ $
		Suppose that $\mathcal{V}$ satisfies property (T). Let $\mathbf{A} \in \mathcal{V}_{\textnormal{fsi}}$, $a \in A_i$ with $i \in I_b \backslash \lbrace 0 \rbrace$ be such that $a^{n+1} < a^n$ for all $n \in \omega$. Then, we are going to prove that $B := \lbrace a^n: n \in \omega \rbrace \cup \lbrace a^{n} \imp \bot_i: n \in \omega \rbrace$ is a subalgebra of $\mathbf{A}_i$. Take $m,n \in \omega$ and observe that $(b \imp \bot_i) \imp \bot_i = b$ for all $b \in A_i$. Then,
		\begin{equation*}
			a^0 = \top, \quad a^n \cdot a^m = a^{n+m},
		\end{equation*}
		\begin{equation*}
			(a^n \imp \bot_i) \imp a^m = \top, \quad a^n \imp (a^m \imp \bot_i) = a^{n+m} \imp \bot_i,
		\end{equation*}
		\begin{gather*}
			a^m \imp a^n =
			\begin{cases}
				a^{n-m} & \text{if $n > m$,} \\
				\top & \text{else},
			\end{cases}
			\quad 
			(a^n \imp \bot_i) \imp (a^m \imp \bot_i) = a^m \imp a^n,
		\end{gather*}
		\begin{equation*}
			\begin{aligned}
				a^m \cdot (a^n \imp \bot_i) & = ((a^m \cdot (a^n \imp \bot_i)) \imp \bot_i) \imp \bot_i \\
				& = (a^m \imp a^n) \imp \bot_i
			\end{aligned}
		\end{equation*}
		and
		\begin{equation*}
			\begin{aligned}
				(a^m \imp \bot_i) \cdot (a^n \imp \bot_i) & = (((a^m \imp \bot_i) \cdot (a^n \imp \bot_i)) \imp \bot_i) \imp \bot_i \\
				& = ((a^n \imp \bot_i) \imp a^m) \imp \bot_i \\
				& = \bot_i.
			\end{aligned} 
		\end{equation*}
		
		Let $\mathbf{B}$ be the hoop with universe $B$. The map defined by $(a^n \imp \bot_i) \mapsto (0,n)$ and $a^n \mapsto (1,-n)$ for all $n \in \omega$ establishes an isomorphism between $\mathbf{B}$ and $\mathbf{S}_1^\omega$. Therefore, considering Proposition~\ref{subhoops_of_ordinal_sum}, $(\mathbf{S}_1 \oplus \mathbf{S}_1^\omega)^+ \cong (\mathbf{S}_1 \oplus \mathbf{B})^+$ is a subalgebra of $\mathbf{A}$. The reciprocal is immediate. 
	\end{proof}
	
	The previous result states that $\mathbf{S}_{1,_1^\omega}$ works as a canonical witness for the property (T). We now continue our study of nuclei on $\mathcal{V}$. Starting now we will suppose that both $\mathbf{S}_{1,1}$ and $\mathbf{S}_{1, \omega}$ are members of $\mathcal{V}$.  We will consider two cases: the case where $\mathbf{S}_{1,_1^\omega}$ is a member of $\mathcal{V}$ and the case where it is not. Notice that both $\mathbf{S}_{1,1}$ and $\mathbf{S}_{1, \omega}$ are subalgebras of $\mathbf{S}_{1,_1^\omega}$. This simplifies our hypothesis if $\mathbf{S}_{1,_1^\omega} \in \mathcal{V}$. 
	\begin{proposition}
		If $\mathbf{S}_{1,_1^\omega} \in \mathcal{V}$, then $\mathcal{V} \vDash t(x) \approx s(x)$ for some $s(x) \in \lbrace x, \neg\neg x  \rbrace$.
	\end{proposition}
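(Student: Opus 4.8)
The plan is to exploit the fact that $\mathbf{S}_{1,_1^\omega} = (\mathbf{S}_1 \oplus \mathbf{S}_1^\omega)^+$ packs a genuinely bounded component and an element $a$ with $a^{n+1} < a^n$ into one and the same bounded component, so that a single nuclear value controls both the ``$\mathbf{S}_{1,1}$-behaviour'' and the ``$\mathbf{S}_{1,\omega}$-behaviour'' and forces a uniform choice between $x$ and $\neg\neg x$. Write $\bot_1$ for the first element of the second component $\mathbf{S}_1^\omega$, so the index set is $I = \{0,1\}$ with $\mathbf{A}_1 = \mathbf{S}_1^\omega$ a bounded component. Since $\bot_1$ is the first element of a component it is idempotent, so by Proposition~\ref{BL_chains_generated_subalgebras} (with $J = \{1\}$) we get $\langle \bot_1 \rangle^{\mathbf{S}_{1,_1^\omega}} \cong \mathbf{S}_{1,1}$, the three-element chain $\{\bot, \bot_1, \top\}$. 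Because subalgebras are closed under the term-operation $t$ and $\mathcal{V} \vDash t(\bot) \approx \bot$, the restriction of $t^{\mathbf{S}_{1,_1^\omega}}$ to this subalgebra is the interpretation of $t$ on a copy of $\mathbf{S}_{1,1}$ fixing $\bot$; Lemma~\ref{A_1_1_A_1_omega_nuclei} then forces it to agree there with $x$ or with $\neg\neg x$, whence $t^{\mathbf{S}_{1,_1^\omega}}(\bot_1) \in \{\bot_1, \top\}$.

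First I would use this to show $\mathbf{S}_{1,_1^\omega} \vDash t(x) \approx s(x)$ for a \emph{single} $s(x) \in \{x, \neg\neg x\}$. By Lemma~\ref{BLchains_nuclei_local_values}(iv), since $\mathbf{A}_1$ has first element $\bot_1$, we have $t^{\mathbf{S}_{1,_1^\omega}}(a) = a \lor t^{\mathbf{S}_{1,_1^\omega}}(\bot_1)$ for every $a \in A_1$, so the value of $t$ on the whole second component is determined by $t^{\mathbf{S}_{1,_1^\omega}}(\bot_1)$. If this value is $\bot_1$, then $t(a) = a \lor \bot_1 = a$ for all $a \in A_1$, i.e.\ $t$ is the identity on $A_1$; if it is $\top$, then $t(a) = a \lor \top = \top$, which by \eqref{BLchains_negation} coincides with $\neg\neg x$ on $A_1$. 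On the first component $A_0$ we have $t|_{A_0} = x|_{A_0} = (\neg\neg x)|_{A_0}$ by Lemma~\ref{A_1_1_A_1_omega_I_b_I_u}(i). Combining the two components yields $\mathbf{S}_{1,_1^\omega} \vDash t(x) \approx s(x)$, with $s = x$ in the first case and $s = \neg\neg x$ in the second.

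Next I would transfer this uniform choice to all of $\mathcal{V}$. As noted before the statement, both $\mathbf{S}_{1,1}$ and $\mathbf{S}_{1,\omega}$ are subalgebras of $\mathbf{S}_{1,_1^\omega}$, hence lie in $\mathcal{V}$ and satisfy the identity $t(x) \approx s(x)$ for the same $s$. Now let $\mathbf{A} \in \mathcal{V}_{\textnormal{fsi}}$, decomposed as $(\bigoplus_{i \in I} \mathbf{A}_i)^+$. For $i = 0$ we have $t|_{A_0} = s|_{A_0}$ by Lemma~\ref{A_1_1_A_1_omega_I_b_I_u}(i); for $i \in I_b \setminus \{0\}$ we obtain $t|_{A_i} = s|_{A_i}$ from Lemma~\ref{A_1_1_A_1_omega_I_b_I_u}(ii), using $\mathbf{S}_{1,1} \vDash t \approx s$; and for $i \in I_u$ we obtain $t|_{A_i} = s|_{A_i}$ from Lemma~\ref{A_1_1_A_1_omega_I_b_I_u}(iii), using $\mathbf{S}_{1,\omega} \vDash t \approx s$. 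Since $I = \{0\} \cup (I_b \setminus \{0\}) \cup I_u$, this covers every component, so $\mathbf{A} \vDash t(x) \approx s(x)$ for all $\mathbf{A} \in \mathcal{V}_{\textnormal{fsi}}$; as $\mathcal{V}_{\textnormal{fsi}}$ generates $\mathcal{V}$, we conclude $\mathcal{V} \vDash t(x) \approx s(x)$.

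The main obstacle is precisely the first step: one must verify that the lone nuclear value $t(\bot_1)$ simultaneously pins down the identity-versus-double-negation choice on the ``bounded'' and on the ``cancellative-looking'' parts of the second component. This is where the hypothesis $\mathbf{S}_{1,_1^\omega} \in \mathcal{V}$ does the real work, ruling out a \emph{mixed} nucleus (identity on bounded components but double negation on unbounded ones, or vice versa). Such mixed nuclei are exactly the nontrivial possibilities that survive when $\mathbf{S}_{1,_1^\omega} \notin \mathcal{V}$, which is the complementary case that must be treated separately.
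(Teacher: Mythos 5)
Your proof is correct and takes essentially the same route as the paper: both arguments force a single choice of $s(x) \in \lbrace x, \neg\neg x \rbrace$ on $\mathbf{S}_{1,_1^\omega}$ --- exploiting that its second component is bounded yet contains elements generating a copy of $\mathbf{S}_{1,\omega}$, so that one nuclear value rules out a mixed behaviour --- and then restrict to the subalgebras $\mathbf{S}_{1,1}$ and $\mathbf{S}_{1,\omega}$ and propagate the common identity to every member of $\mathcal{V}_{\textnormal{fsi}}$ via Lemma~\ref{A_1_1_A_1_omega_I_b_I_u}. The only cosmetic difference is that you establish the uniform behaviour on $\mathbf{S}_{1,_1^\omega}$ directly from Lemma~\ref{BLchains_nuclei_local_values}(iv) and the generated subalgebra $\langle \bot_1 \rangle \cong \mathbf{S}_{1,1}$, whereas the paper gets it by applying Lemma~\ref{A_1_1_A_1_omega_I_b_I_u}(ii) with $\mathbf{A} = \mathbf{S}_{1,_1^\omega}$ (whose index set satisfies $I_u = \emptyset$); these amount to the same argument.
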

	\begin{proof}
		In this case $\mathbf{S}_{1,1} \vDash t(x) \approx s(x)$ implies $\mathbf{S}_{1, \omega} \vDash t(x) \approx s(x)$ for all $s(x) \in \lbrace x, \neg\neg x  \rbrace$. To prove this observe that  
		\begin{equation*}
			\begin{aligned}
				\mathbf{S}_{1,1} \vDash t(x) \approx s(x) & \Rightarrow \mathbf{S}_{1,_1^\omega} \vDash t(x) \approx s(x) \\ 
				& \Rightarrow \mathbf{S}_{1, \omega} \vDash t(x) \approx s(x).
			\end{aligned}
		\end{equation*}
		
		Therefore, by Lemma~\ref{A_1_1_A_1_omega_I_b_I_u}, the value of $t(x)$ on an arbitrary component different from the first one is completely determined by the value of $t(x)$ on $\mathbf{S}_{1,1}$. Specifically, if $\mathbf{S}_{1,1} \vDash t(x) \approx s(x)$, then $\mathbf{A} \vDash t(x) \approx s(x)$ for all $\mathbf{A} \in \mathcal{V}_{\textnormal{fsi}}$. 
	\end{proof}
	
	We now turn our focus to the case where $\mathbf{S}_{1,1}$, $\mathbf{S}_{1, \omega} \in \mathcal{V}$ and $\mathbf{S}_{1,_1^\omega} \notin \mathcal{V}$. In this case, by Proposition~\ref{propertyT_equivalence} and the definition of property (T), for every $\mathbf{A} \in \mathcal{V}_{\textnormal{fsi}}$, $i \in I_b \backslash \lbrace 0 \rbrace$ and $a \in A_i$ there exists some $n \geq 1$ such that $a^n = a^{n+1}$. As a consequence we define the following set:
	\begin{equation*}
		P_{\mathcal{V}} := \left\lbrace n \geq 1: \exists \ \mathbf{A} \in \mathcal{V}_{\textnormal{fsi}}, \ i \in I_b \backslash \lbrace 0 \rbrace, \ a \in A_i \mid a^{n+1} = a^n < a^{n-1} \right\rbrace. 
	\end{equation*}
	
	It can be easily shown that $P_{\mathcal{V}} \neq \emptyset$ if and only if $\mathbf{S}_{1,1} \in \mathcal{V}$. Moreover, the next proposition will show that if $\mathbf{S}_{1,_1^\omega} \notin \mathcal{V}$, then $P_{\mathcal{V}}$ is finite.
	\begin{proposition}\label{infiniteM_implies_propertyT}
		If $P_{\mathcal{V}}$ is infinite, then $\mathcal{V}$ satisfies property $(T)$.
	\end{proposition}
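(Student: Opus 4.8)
The plan is to prove the equivalent statement $\mathbf{S}_{1,_1^\omega} \in \mathcal{V}$ (Proposition~\ref{propertyT_equivalence}) by realizing $\mathbf{S}_{1,_1^\omega}$ as a subalgebra of an ultraproduct of finite members of $\mathcal{V}$. First I would unwind the definition of $P_{\mathcal{V}}$. For each $n \in P_{\mathcal{V}}$ there are $\mathbf{A} \in \mathcal{V}_{\textnormal{fsi}}$, an index $i \in I_b \backslash \lbrace 0 \rbrace$ and $a \in A_i$ with $a^{n+1} = a^n < a^{n-1}$. Since $a^n$ is idempotent and $a^n < a^{n-1} \leq \top$, semi-cancellativity (Proposition~\ref{to_wajsberg_is_semi_cancellative}) forces $a^n = \bot_i$, so $\top = a^0 > a > \dots > a^{n-1} > a^n = \bot_i$ and the subhoop of $\mathbf{A}_i$ generated by $a$ is isomorphic to $\mathbf{S}_n$. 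Adjoining the global bottom and applying Proposition~\ref{subhoops_of_ordinal_sum}, the generated subalgebra is $\langle a \rangle^\mathbf{A} \cong (\mathbf{S}_1 \oplus \mathbf{S}_n)^+ =: \mathbf{S}_{1,n}$. Hence $\mathbf{S}_{1,n} \in \mathcal{V}$ for every $n \in P_{\mathcal{V}}$.

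Next, since $\mathcal{V}$ is a variety it is closed under ultraproducts and subalgebras. As $P_{\mathcal{V}}$ is infinite, I would fix a free ultrafilter $U$ on $P_{\mathcal{V}}$ and set $\mathbf{C} := \prod_U \mathbf{S}_{1,n} \in \mathcal{V}$. The decisive step is to exhibit an embedding $\iota \colon \mathbf{S}_{1,_1^\omega} \hookrightarrow \mathbf{C}$. Writing the $\mathbf{S}_n$-component of $\mathbf{S}_{1,n}$ as $\lbrace a^0, \dots, a^n \rbrace$ with $a^n = \bot_i$, the idea is to map the ``standard'' elements of $\mathbf{S}_1^\omega$ near $\top$ to powers of $a$ and the ``co-standard'' elements near $\bot_i$ to co-powers, via $\iota(\bot) := [\bot]$, $\iota(\top) := [\top]$, $\iota\bigl((1,-m)\bigr) := [a^m]$ and $\iota\bigl((0,j)\bigr) := [a^{n-j}]$, the superscript $n$ running over the index. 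The point is that every defining relation of $\mathbf{S}_1^\omega$ reduces, for each fixed pair of arguments, to an identity valid in $\mathbf{S}_n$ for all sufficiently large $n$; for instance $(1,-j)\cdot(1,-k) = (1,-(j+k))$ corresponds to $a^j \cdot a^k = a^{j+k}$, which holds once $n \geq j+k$, while $(0,j)\cdot(1,-k)$ corresponds to $a^{n-j}\cdot a^k$, equal to $a^{n-(j-k)}$ when $k \leq j$ and to $\bot_i$ when $k > j$, again for large $n$. Because $U$ is free, each such set of indices is $U$-large, so $\iota$ preserves the operations by the theorem of \L o\'s. Injectivity follows since distinct elements are sent to sequences that disagree cofinally (e.g.\ $a^{n-j} \neq a^{n-j'}$ for $j \neq j'$, and $a^{n-j} < a^m$ once $n$ is large).

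Finally, $\mathbf{S}_{1,_1^\omega} \in \mathsf{S}(\mathbf{C}) \subseteq \mathcal{V}$, so Proposition~\ref{propertyT_equivalence} gives that $\mathcal{V}$ satisfies property $(T)$. The main obstacle is the bookkeeping in the second paragraph: one must verify that $\iota$ respects each operation $U$-almost everywhere, the delicate cases being the mixed products and implications between the co-standard block $[a^{n-j}]$ (clustered just above $\bot_i$) and the standard block $[a^m]$ (clustered just below $\top$), checking in each case that the finite-chain value stabilizes for all large $n$. Everything else — closure of $\mathcal{V}$ under $\mathsf{S}$ and $\mathsf{P}_u$, and the final appeal to Proposition~\ref{propertyT_equivalence} — is routine.
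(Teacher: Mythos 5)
There is a genuine gap, and it sits in your first paragraph. The first half of your reduction is fine: $a^{n+1} = a^n < a^{n-1}$ does force $a^n = \bot_i$ and $\top > a > \cdots > a^{n-1} > a^n = \bot_i$. But the conclusion that the subhoop of $\mathbf{A}_i$ generated by $a$ is $\mathbf{S}_n$, hence $\langle a \rangle^\mathbf{A} \cong (\mathbf{S}_1 \oplus \mathbf{S}_n)^+ \in \mathcal{V}$, is false: the chain of powers $\lbrace \top, a, \dots, a^n \rbrace$ is closed under the product but \emph{not} under $\imp$. Semi-cancellativity yields $a^j \imp a^k = a^{k-j}$ only when $a^k > \bot_i$, and the relative negations $a^k \imp \bot_i$ need not be powers of $a$. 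Concretely, let $c$ be the generator of $\mathbf{S}_{20}$ and $a := c^3$ in $\mathbf{A} := (\mathbf{S}_1 \oplus \mathbf{S}_{20})^+$. Then $a^8 = a^7 = c^{20} = \bot_1 < a^6$, so $7 \in P_{\mathsf{V}(\mathbf{A})}$; yet $a \imp \bot_1 = c^{17}$ is not a power of $a$, and in fact $\langle a \rangle^{\mathbf{A}} = \mathbf{A}$. Moreover $(\mathbf{S}_1 \oplus \mathbf{S}_7)^+ \notin \mathsf{V}(\mathbf{A})$: by J\'onsson's lemma every chain of $\mathsf{V}(\mathbf{A})$ lies in $\mathsf{HS}(\mathbf{A})$ ($\mathbf{A}$ being finite), and the subhoops of $\mathbf{S}_{20}$ are exactly the $\mathbf{S}_d$ with $d \mid 20$. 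This example has $P_\mathcal{V}$ finite, so it does not contradict the proposition itself, but it refutes your bridge claim ``$\mathbf{S}_{1,n} \in \mathcal{V}$ for every $n \in P_\mathcal{V}$'', which you assert for arbitrary witnesses. Note that this is exactly why Proposition~\ref{BL_chains_generated_subalgebras} is stated only for idempotent and infinite-order generators, and why the paper's proof of Proposition~\ref{propertyT_equivalence} must adjoin the co-powers $a^k \imp \bot_i$ explicitly; your map $(0,j) \mapsto [a^{n-j}]$ secretly uses the identity $a^k \imp \bot_i = a^{n-k}$, which holds in $\mathbf{S}_n$ but fails in a general bounded Wajsberg chain containing an element of order $n$.

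The repair --- and it is the paper's actual proof --- avoids generated subalgebras altogether: for each $n \in P_{\mathcal{V}}$, Proposition~\ref{subhoops_of_ordinal_sum} gives $(\mathbf{S}_1 \oplus \mathbf{A}_{(n)})^+ \in \mathcal{V}$, where $\mathbf{A}_{(n)}$ is the \emph{entire} bounded component containing the witness $a_n$ (no analysis of what $a_n$ generates is needed). The nonprincipal ultraproduct $\mathbf{B} := \bigl(\prod_{n \in P_\mathcal{V}} (\mathbf{S}_1 \oplus \mathbf{A}_{(n)})^+\bigr)/U$ is an fsi chain with exactly two components (the bound $\lvert I \rvert \leq 2$ is equational), $c := \langle \bot_n \rangle/U$ is the idempotent first element of the second component, and $a := \langle a_n \rangle/U$ satisfies $a^{m+1} < a^m$ for every $m$, since $\lbrace k \in P_\mathcal{V} : k > m \rbrace$ is cofinite and hence in $U$. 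This witnesses property (T) directly, and Proposition~\ref{propertyT_equivalence} --- proved in the paper with precisely the power/co-power bookkeeping your second paragraph attempts for $\mathbf{S}_n$, but carried out for an arbitrary infinite-order element --- converts it into $\mathbf{S}_{1,_1^\omega} \in \mathcal{V}$. In short, your ultraproduct-plus-\L o\'s architecture is the right one and your second paragraph is sound given its inputs, but as written the argument passes through the false claim $\mathbf{S}_{1,n} \in \mathcal{V}$ and therefore does not constitute a proof.
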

	\begin{proof}
		For each $n \in P_{\mathcal{V}}$ there exists a totally ordered Wajsberg hoop $\mathbf{A}_n$ with first element $\bot_n$ and $a_n \in A_n$ such that $a_n^{n+1} = a_n^{n} < a_n^{n-1}$ and $(\mathbf{S}_1 \oplus \mathbf{A}_n)^+ \in \mathcal{V}$. Let $U$ be a nonprincipal ultrafilter on $P_{\mathcal{V}}$. Then 
		\begin{equation*}
			\textstyle \mathbf{B} := (\prod_{n \in P_{\mathcal{V}}} (\mathbf{S}_1 \oplus \mathbf{A}_n)^+ ) \slash U \in \mathcal{V}_{\textnormal{fsi}}
		\end{equation*}
		Moreover, $\mathbf{B}$ is a two component chain: the condition $\lvert I \rvert \leq 2$ can be expressed with an identity (see \cite[Lemma~4.2]{AGLIANO2003}) and the element $c := \langle \bot_n: n \in P_{\mathcal{V}} \rangle \slash U$ satisfies $c^2 = c$ and $\bot < c < \top$, that is, the element $c$ necessarily corresponds to the first element of the second component.
		
		Without loss of generality suppose that $\mathbf{B} = (\mathbf{B}_0 \oplus \mathbf{B}_1)^+$, where $\mathbf{B}_0$ and $\mathbf{B}_1$ are totally ordered Wajsberg hoops with first element and $c$ is the first element of $\mathbf{B}_1 $. If we define $a := \langle a_n: n \in P_{\mathcal{V}} \rangle \slash U$, then $a \in B_1$ and $a^{m+1} < a^{m}$ for all $m \in \omega$. To prove this let $m \in \omega$. Then, it can be easily shown that $\lbrace k \in P_{\mathcal{V}}: a_k^{m+1} < a_k^m \rbrace = \lbrace k \in P_{\mathcal{V}}: k > m \rbrace$. Given that every nonprincipal ultrafilter over an infinite set contains all cofinite sets relative to that set, we conclude that $\lbrace k \in P_{\mathcal{V}}: a_k^{m+1} < a_k^m \rbrace \in U$. Since $m \in \omega$ is arbitrary, $\mathcal{V}$ satisfies property (T). 
	\end{proof}
	\begin{corollary}\label{finite_M}
		If $\mathcal{V}$ does not satisfy property $(T)$ and $P_{\mathcal{V}}$ is not empty, then $P_{\mathcal{V}}$ is finite and $a^m = a^{m+1}$ for all $\mathbf{A} \in \mathcal{V}_{\textnormal{fsi}}$, $i \in I_b \backslash \lbrace 0 \rbrace$, $a \in A_i$, where $m = \bigvee P_{\mathcal{V}}$.
	\end{corollary}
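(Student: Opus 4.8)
The plan is to obtain the finiteness of $P_{\mathcal{V}}$ for free and then to reduce the whole statement to a stabilization argument about powers inside a single Wajsberg component. For the first part I would simply invoke the contrapositive of Proposition~\ref{infiniteM_implies_propertyT}: if $P_{\mathcal{V}}$ were infinite, then $\mathcal{V}$ would satisfy property $(T)$, contradicting the hypothesis; hence $P_{\mathcal{V}}$ is finite. Being also nonempty by assumption, it has a maximum, so $m := \bigvee P_{\mathcal{V}} = \max P_{\mathcal{V}}$ is a well-defined positive integer.

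For the second assertion, fix $\mathbf{A} \in \mathcal{V}_{\textnormal{fsi}}$, $i \in I_b \setminus \lbrace 0 \rbrace$ and $a \in A_i$. The case $a = \top$ is immediate, since then $a^m = \top = a^{m+1}$, so assume $a \neq \top$. First I would record that the powers of $a$ form a non-increasing sequence: by Proposition~\ref{residuated_lattices_properties}(iv), $a^{k+1} = a \cdot a^k \leq a \land a^k \leq a^k$ for every $k \in \omega$. Because $\mathcal{V}$ does not satisfy property $(T)$, the triple $(\mathbf{A}, i, a)$ cannot witness $(T)$; that is, it is \emph{not} the case that $a^{n+1} < a^n$ for all $n \geq 0$. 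Combined with the monotonicity just noted, this forces $a^{n+1} = a^n$ for some $n$, so I may let $n_a$ be the least exponent satisfying $a^{n_a} = a^{n_a+1}$.

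The key step is then to check that $n_a \in P_{\mathcal{V}}$. Since $a \neq \top$ we have $a = a^1 < \top = a^0$, so $n_a \geq 1$; and minimality of $n_a$ together with the strict inequalities $a^{k+1} < a^k$ for all $k < n_a$ (non-increasing and not equal) yields $a^{n_a+1} = a^{n_a} < a^{n_a-1}$. Thus the very triple $(\mathbf{A}, i, a)$ witnesses $n_a \in P_{\mathcal{V}}$, whence $n_a \leq m$. Finally, an easy induction shows the sequence is constant from $n_a$ on: if $a^k = a^{n_a}$ for some $k \geq n_a$, then $a^{k+1} = a \cdot a^k = a \cdot a^{n_a} = a^{n_a+1} = a^{n_a}$. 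Since $m \geq n_a$ and $m+1 \geq n_a$, this gives $a^m = a^{n_a} = a^{m+1}$, as required.

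The argument is essentially routine; the only point requiring care is the passage from the failure of $(T)$ to the eventual constancy of the powers of an \emph{arbitrary} $a$ in a bounded non-first component, together with the observation that the stabilization exponent of each such $a$ is itself recorded in $P_{\mathcal{V}}$. It is precisely this second observation that makes $m = \max P_{\mathcal{V}}$ a \emph{uniform} bound valid across all of $\mathcal{V}_{\textnormal{fsi}}$, rather than an exponent depending on the chosen element. No quantitative estimates are involved.
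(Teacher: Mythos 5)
Your proof is correct and follows exactly the route the paper intends for this (unproved) corollary: finiteness of $P_{\mathcal{V}}$ via the contrapositive of Proposition~\ref{infiniteM_implies_propertyT}, and the uniform bound $m = \bigvee P_{\mathcal{V}}$ from the observation that the failure of property $(T)$ forces the non-increasing power sequence of each $a \in A_i$, $i \in I_b \setminus \lbrace 0 \rbrace$, to stabilize at an exponent $n_a$ which is itself a member of $P_{\mathcal{V}}$. Your explicit verification that $n_a \geq 1$ and $a^{n_a+1} = a^{n_a} < a^{n_a-1}$ fills in precisely the details the paper leaves implicit.
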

	
	We now continue our study of nuclei on $\mathcal{V}$. Since we are supposing that $\mathbf{S}_{1,1}, \mathbf{S}_{1, \omega} \in \mathcal{V}$ and $\mathbf{S}_{1,_1^\omega} \notin \mathcal{V}$, we can use the previous corollary. Using similar ideas as those used previously we consider four cases:
	\begin{enumerate}
		\item $\mathbf{S}_{1,1} \vDash t(x) \approx x$ and $\mathbf{S}_{1, \omega} \vDash t(x) \approx x$,
		\item $\mathbf{S}_{1,1} \vDash t(x) \approx \neg\neg x$ and $\mathbf{S}_{1, \omega} \vDash t(x) \approx \neg\neg x$,
		\item $\mathbf{S}_{1,1} \vDash t(x) \approx x$ and $\mathbf{S}_{1, \omega} \vDash t(x) \approx \neg\neg x$,
		\item $\mathbf{S}_{1,1} \vDash t(x) \approx \neg\neg x$ and $\mathbf{S}_{1, \omega} \vDash t(x) \approx x$.
	\end{enumerate}
	
	The first two cases are immediate: the first one implies $\mathcal{V} \vDash t(x) \approx x$ and the second one $\mathcal{V} \vDash t(x) \approx \neg\neg x$ (by Lemma~\ref{A_1_1_A_1_omega_I_b_I_u}). The last two cases give place to nontrivial nuclei on $\mathcal{V}$. If $\mathbf{S}_{1,1} \vDash t(x) \approx x$ and $\mathbf{S}_{1, \omega} \vDash t(x) \approx \neg\neg x$, then we are looking for a term such that
	\begin{enumerate}
		\item[$\bullet$] $\ensuremath{\left. t^\mathbf{A} \right|_{A_0}} = \ensuremath{\left. x^\mathbf{A} \right|_{A_0}}$,
		\item[$\bullet$] $\ensuremath{\left. t^\mathbf{A} \right|_{A_i}} = \ensuremath{\left. x^\mathbf{A} \right|_{A_i}}$ for all $i \in I_b \backslash \lbrace 0 \rbrace$ and
		\item[$\bullet$] $\ensuremath{\left. t^\mathbf{A} \right|_{A_i}} = \ensuremath{\left. (\neg\neg x)^\mathbf{A} \right|_{A_i}} = \ensuremath{\left. \top^\mathbf{A} \right|_{A_i}}$ for all $i \in I_u$
	\end{enumerate}
	for all $\mathbf{A} \in \mathcal{V}_{\textnormal{fsi}}$. Notice that the term $\neg\neg x$ distinguishes elements of $A_0$ from elements of $A_i$ with $i \in I\backslash\lbrace 0 \rbrace$. Also observe that, if $a \in A_i$ with $i \in I\backslash\lbrace 0 \rbrace$, then
	\begin{equation*}
		a^m \imp a^{m+1} =
		\begin{cases}
			\top & \text{if $a \in A_i$, $i \in I_b \backslash \lbrace 0 \rbrace$}, \\
			a & \text{if $a \in A_i$, $i \in I_u$},
		\end{cases} 
	\end{equation*}
	where $m = \bigvee P_{\mathcal{V}}$. Therefore, the term $x^m \imp x^{m+1}$ allows us to distinguish elements of components with first element (different from $\mathbf{A}_0$) from elements of unbounded components. If we define
	\begin{equation*}
		s_m(x) := \neg\neg x \land ((x^m \imp x^{m+1}) \imp x),
	\end{equation*}
	then it can be easily shown that $\mathbf{A} \vDash t(x) \approx s_m(x)$ for all $\mathbf{A} \in \mathcal{V}_{\textnormal{fsi}}$. Now consider the case where $\mathbf{S}_{1,1} \vDash t(x) \approx \neg\neg x$ and $\mathbf{S}_{1, \omega} \vDash t(x) \approx x$. Following similar ideas as before, if we define
	\begin{equation*}
		t_m(x) := \neg\neg x \land (x^m \imp x^{m+1}),
	\end{equation*}
	then $\mathbf{A} \vDash t(x) \approx t_m(x)$ for all $\mathbf{A} \in \mathcal{V}_{\textnormal{fsi}}$. We summarize all results with the following theorem.
	\begin{theorem}
		If $\mathbf{S}_{1,1}$, $\mathbf{S}_{1, \omega} \in \mathcal{V}$ and $\mathbf{S}_{1,_1^\omega} \notin \mathcal{V}$, then $\mathcal{V} \vDash t(x) \approx s(x)$ for some $s(x) \in \lbrace x,s_m(x),t_m(x), \neg\neg x \rbrace$ and $m = \bigvee P_{\mathcal{V}}$.
	\end{theorem}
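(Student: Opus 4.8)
The plan is to reduce the statement to the four cases enumerated just before it and to verify each claimed representative componentwise on every chain of $\mathcal{V}$. First I would apply Lemma~\ref{A_1_1_A_1_omega_nuclei}: since $t(x)$ is a nucleus on $\mathcal{V}$ with $\mathcal{V} \vDash t(\bot) \approx \bot$, and both $\mathbf{S}_{1,1}$ and $\mathbf{S}_{1,\omega}$ belong to $\mathcal{V}$, the restrictions $t^{\mathbf{S}_{1,1}}$ and $t^{\mathbf{S}_{1,\omega}}$ each coincide with $x$ or with $\neg\neg x$. This produces exactly the four combinations listed above, so the whole proof amounts to handling those four cases.

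Next, because $\mathcal{V}_{\textnormal{fsi}}$ generates $\mathcal{V}$, it suffices to establish each target identity on an arbitrary BL-chain $\mathbf{A} = (\bigoplus_{i \in I} \mathbf{A}_i)^+ \in \mathcal{V}_{\textnormal{fsi}}$, and since the universe of $\mathbf{A}$ is the union of its components I would check agreement component by component. Here Lemma~\ref{A_1_1_A_1_omega_I_b_I_u} does the bookkeeping: part (i) fixes $t^\mathbf{A}$ on $A_0$, where it agrees with both $x$ and $\neg\neg x$; part (ii) transfers the behavior of $t$ on $\mathbf{S}_{1,1}$ to every bounded component $i \in I_b \setminus \lbrace 0 \rbrace$; and part (iii) transfers its behavior on $\mathbf{S}_{1,\omega}$ to every unbounded component $i \in I_u$. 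Consequently, in each of the four cases the value of $t^\mathbf{A}$ on each component is completely determined, and the two ``pure'' cases $\mathbf{S}_{1,1},\mathbf{S}_{1,\omega} \vDash t(x) \approx x$ and $\mathbf{S}_{1,1},\mathbf{S}_{1,\omega} \vDash t(x) \approx \neg\neg x$ immediately give $\mathcal{V} \vDash t(x) \approx x$ and $\mathcal{V} \vDash t(x) \approx \neg\neg x$ respectively.

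For the two mixed cases I would verify that $s_m$ and $t_m$ reproduce the prescribed componentwise values, with $m = \bigvee P_{\mathcal{V}}$. The crucial ingredient is the behavior of $x^m \imp x^{m+1}$: since $\mathbf{S}_{1,_1^\omega} \notin \mathcal{V}$, Proposition~\ref{propertyT_equivalence} shows that $\mathcal{V}$ fails property (T), so Corollary~\ref{finite_M} guarantees that $m$ is finite and that $a^m = a^{m+1}$ on every bounded component, whence $a^m \imp a^{m+1} = \top$ there, while cancellativity gives $a^m \imp a^{m+1} = a$ on every unbounded component. Combining this with $\neg\neg a = a$ for $a \in A_0$ and $\neg\neg a = \top$ for $a$ in any higher component (equation~\eqref{BLchains_negation}), a short residuation computation (using integrality to get $a \le (a^m\imp a^{m+1})\imp a$ and $a \le a^m\imp a^{m+1}$ on $A_0$) shows that $s_m^\mathbf{A}$ equals $x$ on $A_0$ and on bounded components and equals $\top$ on unbounded components, matching case~3; symmetrically $t_m^\mathbf{A}$ matches case~4.

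The hard part will be the third paragraph, namely confirming that the single finite exponent $m = \bigvee P_{\mathcal{V}}$ simultaneously forces $x^m$ to be idempotent on every bounded component across all chains of $\mathcal{V}$ while leaving $x^m \imp x^{m+1}$ equal to $x$ on unbounded components; this is precisely what Corollary~\ref{finite_M} (and, behind it, Proposition~\ref{infiniteM_implies_propertyT}) secures, and it is the reason the hypothesis $\mathbf{S}_{1,_1^\omega} \notin \mathcal{V}$ cannot be dropped. Once that uniform exponent is in hand, the remaining verifications that $s_m$ and $t_m$ lie in the interval $[x,\neg\neg x]$ and realize the required values are routine residuated-lattice calculations, completing all four cases and hence the theorem.
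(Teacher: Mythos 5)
Your proof is correct and takes essentially the same route as the paper: the four-case reduction via Lemma~\ref{A_1_1_A_1_omega_nuclei}, the componentwise transfer to arbitrary chains via Lemma~\ref{A_1_1_A_1_omega_I_b_I_u}, and the appeal to Proposition~\ref{propertyT_equivalence} and Corollary~\ref{finite_M} to secure the uniform finite exponent $m = \bigvee P_{\mathcal{V}}$ before checking that $s_m(x)$ and $t_m(x)$ realize the two mixed cases. Your componentwise evaluations of $\neg\neg x$ and $x^m \imp x^{m+1}$ (idempotence on bounded components, cancellativity on unbounded ones) are exactly the computations the paper relies on, so there is nothing to add.
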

	
	We want to do a more detailed study about the last case. Unlike previous ones, we now are in the presence of nontrivial nuclei. The last theorem lists all possible nuclei on $\mathcal{V}$ but does not guarantee that $s_m(x)$ or $t_m(x)$ is a nucleus on $\mathcal{V}$. We start with a characterization of $s_m(x)$ and $t_m(x)$ being nuclei on an arbitrary variety $\mathcal{V} \subseteq \mathcal{BL}$.
	\begin{theorem}\label{s_m_t_m_characterization}
		$s_m(x)$ is a nucleus on $\mathcal{V}$ if and only if, for all $\mathbf{A} \in \mathcal{V}_{\textnormal{fsi}}$,
		\begin{enumerate}
			\item[-] $i < j$ for all $i \in I_b \backslash \lbrace 0 \rbrace$, $j \in I_u$ and
			\item[-] $a^m = a^{m+1}$ for all $a \in A_i$, $i \in I_b\backslash \lbrace 0 \rbrace$.
		\end{enumerate}
		Analogously, $t_m(x)$ is a nucleus on $\mathcal{V}$ if and only if, for all $\mathbf{A} \in \mathcal{V}_{\textnormal{fsi}}$,
		\begin{enumerate}
			\item[-] $i < j$ for all $i \in I_u$, $j \in I_b \backslash \lbrace 0 \rbrace$ and
			\item[-] $a^m  = a^{m+1}$ for all $a \in A_i$, $i \in I_b\backslash \lbrace 0 \rbrace$.
		\end{enumerate}
	\end{theorem}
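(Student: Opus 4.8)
The plan is to reduce everything to the totally ordered members of $\mathcal{V}$ and then evaluate $s_m$ and $t_m$ componentwise. By the equational characterization of a nucleus given by a term (a term $u(x)$ is a nucleus on $\mathcal{V}$ if and only if $\mathcal{V}\vDash u(x)\imp u(y)\approx x\imp u(y)$) and the fact that $\mathcal{V}_{\textnormal{fsi}}$ generates $\mathcal{V}$, it suffices to decide, for each BL-chain $\mathbf{A}=(\bigoplus_{i\in I}\mathbf{A}_i)^+\in\mathcal{V}_{\textnormal{fsi}}$, whether $s_m^{\mathbf{A}}$ (respectively $t_m^{\mathbf{A}}$) satisfies $u(a)\imp u(b)=a\imp u(b)$ for all $a,b\in A$. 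The first step is thus a pointwise computation of $s_m$ on each component, using \eqref{BLchains_negation} for the factor $\neg\neg x$ and semi-cancellativity (Proposition~\ref{to_wajsberg_is_semi_cancellative}) for the factor $x^m\imp x^{m+1}$: on a cancellative (unbounded) component $x^m\imp x^{m+1}=x$, while on a component that is $m$-idempotent (i.e.\ $a^m=a^{m+1}$) it equals $\top$.

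For the backward implication I would assume that (1) and (2) hold on every $\mathbf{A}\in\mathcal{V}_{\textnormal{fsi}}$. The evaluations above then yield $s_m^{\mathbf{A}}=\gamma$, where $\gamma$ is the identity on $\bigcup_{i\in I_b}A_i$ (including $A_0$, where the factor $\neg\neg x$ pins the value to $x$) and is constantly $\top$ on $\bigcup_{i\in I_u}A_i$. It then remains to verify $\gamma(a)\imp\gamma(b)=a\imp\gamma(b)$ by cases on the components of $a$ and $b$. When $b$ lies in an unbounded component both sides are $\top$, and when $a$ and $b$ lie in bounded components the equality is immediate; the only delicate case is $a$ in an unbounded component and $b$ in a bounded one, where I must check $b=a\imp b$. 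This is exactly where condition (1) enters: it forces the index of $b$ to lie below that of $a$, and then the ordinal-sum definition of $\imp$ gives $a\imp b=b$. The computation for $t_m$ is dual: under its hypotheses $t_m^{\mathbf{A}}$ is the identity on $A_0$ and on the unbounded components and is $\top$ on the bounded non-first components, and the cross case uses the $t_m$-ordering condition together with the ordinal-sum rule $x\imp y=y$ to conclude.

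For the forward implication I would argue by contraposition on each condition separately. If the ordering condition fails, there is a chain with an unbounded component $A_{i_0}$ below a bounded component $A_{j_0}$, say $i_0<j_0$ with $i_0\in I_u$ and $j_0\in I_b\setminus\{0\}$; taking $a\in A_{i_0}\setminus\{\top\}$ and $b=\bot_{j_0}$ one computes $s_m(a)=\top$ and $s_m(b)=b\neq\top$, so that $s_m(a)\imp s_m(b)=b$ while $a\imp s_m(b)=a\imp b=\top$ by the ordinal-sum rule, contradicting \eqref{nuclei_equation}. If instead the idempotence condition (2) fails, there is $a\in A_{i_0}$ with $i_0\in I_b\setminus\{0\}$ and $a^{m+1}<a^m$; in the case $a^{m+1}>\bot_{i_0}$ semi-cancellativity gives $a^m\imp a^{m+1}=a$, hence $s_m(a)=\top$, and testing \eqref{nuclei_equation} against $\bot_{i_0}$ yields $\bot_{i_0}=s_m(a)\imp\bot_{i_0}=a\imp\bot_{i_0}=\neg_{i_0}a>\bot_{i_0}$ (the last inequality because $a\neq\top$ in the Wajsberg component), a contradiction.

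The main obstacle is the remaining boundary subcase of (2), namely $a^{m+1}=\bot_{i_0}$, in which $s_m(a)$ need not be $\top$ and the simple test above does not apply. Here I expect to extract a genuine witness by analysing the least power $N$ with $a^N=\bot_{i_0}$ and applying semi-cancellativity to powers $a^k$ for which $a^{k(m+1)}>\bot_{i_0}$, and, if necessary, passing to a subalgebra or an ultraproduct inside $\mathcal{V}$ (in the spirit of Proposition~\ref{infiniteM_implies_propertyT}) in order to realise an element whose relevant powers strictly decrease in the interior of the component; this reduces matters to the interior case and exhibits a failure of idempotence or of \eqref{nuclei_equation} for $s_m$. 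The entire argument for $t_m$ is dual, exchanging the roles of the bounded non-first components and the unbounded components throughout.
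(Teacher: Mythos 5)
Your backward direction and your treatment of the ordering condition are correct and essentially agree with the paper (the paper verifies \eqref{nuclei_equation} componentwise for the backward implication, and gets the ordering condition from monotonicity rather than an explicit witness, but this is the same idea). The genuine gap is exactly where you flag it: the boundary subcase $a^{m+1}=\bot_{i_0}<a^m$ of the idempotence condition, and neither of your two proposed repairs can work. First, powers of $a$ are useless: $a^{m+1}=\bot_{i_0}$ forces $(a^k)^{m+1}=a^{k(m+1)}=\bot_{i_0}$ for every $k\geq 1$, so every power of $a$ is again a boundary witness, never an interior one. Second, no passage to subalgebras or ultraproducts can ``realise an interior witness'' in general: take $\mathcal{V}=\mathsf{V}\bigl((\mathbf{S}_1\oplus\mathbf{S}_{m+1})^+\bigr)$. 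By J\'onsson's lemma and Proposition~\ref{subhoops_of_ordinal_sum}, every chain in $\mathcal{V}_{\textnormal{fsi}}$ has second component a subalgebra (or quotient) of $\mathbf{S}_{m+1}$, and in $\mathbf{S}_{m+1}$ the only element with $c^{m+1}<c^m$ is the generator, for which $c^{m+1}=\bot_{i_0}$. So condition (2) fails in $\mathcal{V}$, $s_m$ is genuinely not a nucleus there, yet \emph{every} witness in \emph{every} fsi member is a boundary witness; your reduction to the interior case is therefore impossible. Note also that the boundary case is genuinely delicate: in a component like the standard MV-chain one can have $a^{m+1}=\bot_{i_0}<a^m$ with $s_m(a)=\neg_{\bot_{i_0}}(a^m)\imp a<\top$, so your interior-case test (which relies on $s_m(a)=\top$) does not extend.

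The paper avoids the case split entirely by not arguing contrapositively. Since $\bot_i^2=\bot_i$ gives $s_m^\mathbf{A}(\bot_i)=\bot_i$ for every $i\in I_b$, Lemma~\ref{BLchains_nuclei_local_values}(iv) forces a nucleus $s_m^\mathbf{A}$ to be the identity on every bounded component; then $s_m^\mathbf{A}(a)=a$ reads $(a^m\imp a^{m+1})\imp a=a$, and the Wajsberg law $(u\imp a)\imp a=u\lor a$ yields $a\lor(a^m\imp a^{m+1})=\top$, hence $a^m\imp a^{m+1}=\top$ on a chain with $a<\top$, i.e.\ $a^m=a^{m+1}$ --- uniformly for interior and boundary witnesses. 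If you want to keep your contrapositive structure, the same identity patches the boundary case directly: there $u:=a^m\imp a^{m+1}=\neg_{\bot_{i_0}}(a^m)<\top$, and $u\imp a=a$ would give $u\lor a=(u\imp a)\imp a=\top$, impossible on a chain with $u,a<\top$; hence $s_m(a)=u\imp a>a$, while any nucleus fixing $\bot_{i_0}$ satisfies $s_m(a)\leq\neg_{\bot_{i_0}}\neg_{\bot_{i_0}}a=a$ on the MV component (Lemma~\ref{nuclei_image_characterization}, or Lemma~\ref{nuclei_properties1}(vii)), a contradiction. With that replacement your argument closes; as written, it does not.
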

	\begin{proof}
		We will restrict ourselves to the proof for $s_m(x)$, since both proofs are very similar. Before starting notice that
		\begin{equation*}
			s_m^\mathbf{A}(a) = 
			\begin{cases}
				a & \text{if $a \in A_0$}, \\
				(a^m \imp a^{m+1}) \imp a & \text{if $a \in A_i$, $i \in I_b \backslash\lbrace 0 \rbrace$}, \\
				\top & \text{if $a \in A_i$, $i \in I_u$}.
			\end{cases}
		\end{equation*}
		for all $\mathbf{A} \in \mathcal{V}_{\textnormal{fsi}}$ and $a \in A$. Now suppose that $s_m(x)$ is a nucleus on $\mathcal{V}$ and let $\mathbf{A}$ be a member of $\mathcal{V}_{\textnormal{fsi}}$. If $i \in I_b$, then $s_m^\mathbf{A}(\bot_i) = \bot_i$ and by Lemma~\ref{BLchains_nuclei_local_values}, $\ensuremath{\left. s_m^\mathbf{A} \right|_{A_i}} = \ensuremath{\left. x^\mathbf{A} \right|_{A_i}}$. By monotonicity of $s_m^\mathbf{A}$ we conclude that $i < j$ for all $i \in I_b \backslash \lbrace 0 \rbrace$, $j \in I_u$. At last, if $a \in A_i$ with $i \in I_b \backslash \lbrace 0 \rbrace$, then
		\begin{equation*}
			\begin{aligned}
				(a^m \imp a^{m+1}) \imp a = a & \Rightarrow ((a^m \imp a^{m+1}) \imp a) \imp a = \top \\
				& \Rightarrow a \lor (a^m \imp a^{m+1}) = \top \\
				& \Rightarrow a^m \imp a^{m+1} = \top. 
			\end{aligned}
		\end{equation*}
		
		Now suppose that $i < j$ for all $i \in I_b \backslash \lbrace 0 \rbrace$, $j \in I_u$ and $a^m = a^{m+1}$ for all $\mathbf{A} \in \mathcal{V}_{\textnormal{fsi}}$, $a \in A_i$, $i \in I_b\backslash \lbrace 0 \rbrace$ . Second hypothesis implies that 
		\begin{equation*}
			s_m^\mathbf{A}(a) =
			\begin{cases}
				a & \text{if $a \in I_b$}, \\
				\top & \text{if $a \in I_u$}
			\end{cases}
		\end{equation*}
		for all $a \in A$ and $\mathbf{A} \in \mathcal{V}_{\textnormal{fsi}}$. Using the first hypothesis we come to the conclusion that $\mathbf{A} \vDash s_m(x) \imp s_m(y) \approx x \imp s_m(y)$ for all $\mathbf{A} \in \mathcal{V}_{\textnormal{fsi}}$. This ends the proof. 
	\end{proof}
	
	Before continuing we introduce the following definitions:
	\begin{equation*}
		\mathbf{S}_{1,1, \omega} := (\mathbf{S}_1 \oplus \mathbf{S}_1 \oplus \mathbf{S}_\omega)^+ \quad \text{and} \quad \mathbf{S}_{1, \omega,1} := (\mathbf{S}_1 \oplus \mathbf{S}_\omega \oplus \mathbf{S}_1)^+.
	\end{equation*}	
	
	By Proposition~\ref{BL_chains_generated_subalgebras}, $\mathbf{S}_{1,1, \omega} \in \mathcal{V}$ if and only if there exists $\mathbf{A} \in \mathcal{V}_{\textnormal{fsi}}$ and $i \in I_b \backslash \lbrace 0 \rbrace$, $j \in I_u$ such that $i < j$. Analogously, $\mathbf{S}_{1, \omega,1} \in \mathcal{V}$ if and only if there exists $\mathbf{A} \in \mathcal{V}_{\textnormal{fsi}}$ and $i \in I_u$, $j \in I_b \backslash \lbrace 0 \rbrace$ such that $i < j$. These remarks together with Corollary~\ref{finite_M} and Theorem~\ref{s_m_t_m_characterization} lead us to the next result.
	\begin{corollary}
		Let $\mathcal{V}$ be such that $\mathbf{S}_{1,1} \in \mathcal{V}$, $\mathbf{S}_{1,_1^\omega} \notin \mathcal{V}$. Consider $m = \bigvee P_{\mathcal{V}}$.
		\begin{enumerate}[label=\rm{(\roman*)}]
			\item $s_m(x)$ is a nucleus on $\mathcal{V}$ if and only if $\mathbf{S}_{1, \omega,1} \notin \mathcal{V}$. 
			\item $t_m(x)$ is a nucleus on $\mathcal{V}$ if and only if $\mathbf{S}_{1,1, \omega} \notin \mathcal{V}$.
		\end{enumerate}
	\end{corollary}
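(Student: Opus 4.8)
The plan is to combine the characterization in Theorem~\ref{s_m_t_m_characterization} with the structural consequences of the standing hypotheses, so that both equivalences reduce to the order conditions captured by the canonical chains. First I would observe that the assumption $\mathbf{S}_{1,_1^\omega} \notin \mathcal{V}$ means, by Proposition~\ref{propertyT_equivalence}, that $\mathcal{V}$ does not satisfy property~(T), while $\mathbf{S}_{1,1} \in \mathcal{V}$ guarantees $P_{\mathcal{V}} \neq \emptyset$. Corollary~\ref{finite_M} then applies and yields two things at once: that $m = \bigvee P_{\mathcal{V}}$ is a well-defined natural number, and that $a^m = a^{m+1}$ for every $\mathbf{A} \in \mathcal{V}_{\textnormal{fsi}}$, every $i \in I_b \backslash \lbrace 0 \rbrace$ and every $a \in A_i$. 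The key point is that this is precisely the second clause appearing in each of the two equivalences of Theorem~\ref{s_m_t_m_characterization}; hence, under our hypotheses, that clause is automatically fulfilled and may be discarded, leaving only the order condition in each case.

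With the power-stabilization clause out of the way, the criterion for $s_m(x)$ to be a nucleus on $\mathcal{V}$ collapses to the single requirement: for every $\mathbf{A} \in \mathcal{V}_{\textnormal{fsi}}$ one has $i < j$ whenever $i \in I_b \backslash \lbrace 0 \rbrace$ and $j \in I_u$. I would then negate this universally quantified statement. Since the index order $\langle I, \leq \rangle$ is total and $I_b \cap I_u = \emptyset$, the failure of $i < j$ forces $j < i$; thus the negation reads: there exist $\mathbf{A} \in \mathcal{V}_{\textnormal{fsi}}$, $j \in I_u$ and $i \in I_b \backslash \lbrace 0 \rbrace$ with $j < i$. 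By the remark preceding the statement (itself an instance of Proposition~\ref{BL_chains_generated_subalgebras}), this existential condition is exactly equivalent to $\mathbf{S}_{1, \omega,1} \in \mathcal{V}$. Therefore $s_m(x)$ is a nucleus on $\mathcal{V}$ if and only if $\mathbf{S}_{1, \omega,1} \notin \mathcal{V}$, which is part~(i).

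Part~(ii) follows by the symmetric argument with the roles of $I_b \backslash \lbrace 0 \rbrace$ and $I_u$ interchanged: Theorem~\ref{s_m_t_m_characterization}, again after discarding the automatic power clause, reduces to requiring $i < j$ for all $i \in I_u$, $j \in I_b \backslash \lbrace 0 \rbrace$ across $\mathcal{V}_{\textnormal{fsi}}$, whose negation is the existence of a chain placing a nontrivial bounded component below an unbounded one, that is, $\mathbf{S}_{1,1, \omega} \in \mathcal{V}$. I do not expect a genuine obstacle, since the corollary is essentially a bookkeeping synthesis of earlier results; the only point requiring care is the logical manipulation of quantifiers, namely ensuring that the negation of a statement universally quantified over all of $\mathcal{V}_{\textnormal{fsi}}$ corresponds precisely to the existence of a single witnessing chain, and that this witness is captured by the canonical subalgebra $\mathbf{S}_{1, \omega,1}$ (respectively $\mathbf{S}_{1,1, \omega}$) through the generation result Proposition~\ref{BL_chains_generated_subalgebras}.
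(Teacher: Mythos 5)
Your proof is correct and takes essentially the same route as the paper, which states the corollary as an immediate consequence of Theorem~\ref{s_m_t_m_characterization}, Corollary~\ref{finite_M}, and the remarks (via Proposition~\ref{BL_chains_generated_subalgebras}) characterizing when $\mathbf{S}_{1,1,\omega}$ and $\mathbf{S}_{1,\omega,1}$ belong to $\mathcal{V}$ --- precisely the synthesis you carry out. Your explicit handling of the quantifier negation, using the totality of $\langle I,\leq\rangle$ and $I_b \cap I_u = \emptyset$, correctly fills in the one detail the paper leaves implicit.
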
										
	
	Observe that both $\mathbf{S}_{1,1}$ and $\mathbf{S}_{1, \omega}$ are subalgebras of $\mathbf{S}_{1, \omega,1}$ and $\mathbf{S}_{1,1, \omega}$. This fact will allow us to simplify some of the hypotheses. We now summarize all cases with a single theorem.
	\begin{theorem}\label{BL_nuclei}
		Let $\mathcal{V}$ be a subvariety of $\mathcal{BL}$ and $t(x)$ a nucleus on $\mathcal{V}$.
		\begin{enumerate}[label=\rm{(\roman*)}]
			\item If  \ - $\mathbf{S}_{1,1}$, $\mathbf{S}_{1, \omega} \notin \mathcal{V}$ or
			\begin{enumerate}
				\item[-] $\mathbf{S}_{1,1} \in \mathcal{V}$ and $\mathbf{S}_{1, \omega} \notin \mathcal{V}$ or
				\item[-] $\mathbf{S}_{1, \omega} \in \mathcal{V}$ and $\mathbf{S}_{1,1} \notin \mathcal{V}$ or
				\item[-] $\mathbf{S}_{1,_1^\omega} \in \mathcal{V}$ or
				\item[-] $\mathbf{S}_{1,1, \omega}$, $\mathbf{S}_{1, \omega,1} \in \mathcal{V}$ and $\mathbf{S}_{1,_1^\omega} \notin \mathcal{V}$, 
			\end{enumerate} 
			then $t(x)$ is a nucleus on $\mathcal{V}$ if and only if it is trivial.
			\item If $\mathbf{S}_{1,1, \omega} \in \mathcal{V}$ and $\mathbf{S}_{1,_1^\omega}$, 
			$\mathbf{S}_{1, \omega,1} \notin \mathcal{V}$, then $t(x)$ is a nontrivial nucleus on $\mathcal{V}$ if and only if $\mathcal{V} \vDash t(x) \approx s_m(x)$, where $m = \bigvee P_{\mathcal{V}}$.
			\item If $\mathbf{S}_{1, \omega,1} \in \mathcal{V}$ and $\mathbf{S}_{1,_1^\omega}$, $\mathbf{S}_{1,1, \omega} \notin \mathcal{V}$, then $t(x)$ is a nontrivial nucleus on $\mathcal{V}$ if and only if $\mathcal{V} \vDash t(x) \approx t_m(x)$, where $m = \bigvee P_{\mathcal{V}}$.
			\item If $\mathbf{S}_{1,1}$, $\mathbf{S}_{1, \omega} \in \mathcal{V}$ and $\mathbf{S}_{1,_1^\omega}$, $\mathbf{S}_{1, \omega,1}$, $\mathbf{S}_{1,1, \omega} \notin \mathcal{V}$, then $t(x)$ is a nontrivial nucleus on $\mathcal{V}$ if and only if $\mathcal{V} \vDash t(x) \approx s(x)$ for some $s(x) \in \lbrace s_m(x), t_m(x) \rbrace$, where $m = \bigvee P_{\mathcal{V}}$.
		\end{enumerate}	
	\end{theorem}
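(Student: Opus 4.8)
The plan is to read the classification off the results already proved, via a case split on which of the five chains $\mathbf{S}_{1,1}$, $\mathbf{S}_{1,\omega}$, $\mathbf{S}_{1,_1^\omega}$, $\mathbf{S}_{1,1,\omega}$, $\mathbf{S}_{1,\omega,1}$ belong to $\mathcal{V}$. First I would eliminate the $\top$ alternative: by Proposition~\ref{nuclei_term_interval}, a nucleus $t$ on $\mathcal{V}$ either satisfies $\mathcal{V} \vDash t(x) \approx \top$ (trivial) or $\mathcal{V} \vDash t(\bot) \approx \bot$, and I assume the latter so that all the earlier lemmas apply. Next I would verify that the hypotheses of (i)--(iv) are exhaustive and pairwise disjoint. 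This uses the subalgebra containments recorded before the statement: $\mathbf{S}_{1,1}$ and $\mathbf{S}_{1,\omega}$ embed into each of $\mathbf{S}_{1,_1^\omega}$, $\mathbf{S}_{1,1,\omega}$ and $\mathbf{S}_{1,\omega,1}$, so membership of any one of the latter three forces $\mathbf{S}_{1,1}, \mathbf{S}_{1,\omega} \in \mathcal{V}$. Splitting first on whether $\mathbf{S}_{1,1}, \mathbf{S}_{1,\omega}$ lie in $\mathcal{V}$, then (when both do) on whether $\mathbf{S}_{1,_1^\omega} \in \mathcal{V}$, and finally on the presence of $\mathbf{S}_{1,1,\omega}$ and $\mathbf{S}_{1,\omega,1}$, produces exactly the listed cases.

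For part (i) the first four alternatives are immediate citations: the ``simple cases'' proposition disposes of $\mathbf{S}_{1,1},\mathbf{S}_{1,\omega}\notin\mathcal{V}$ and the two mixed cases, and the proposition treating $\mathbf{S}_{1,_1^\omega}\in\mathcal{V}$ gives the fourth, each yielding $\mathcal{V}\vDash t(x)\approx s(x)$ with $s\in\{x,\neg\neg x\}$. For the fifth alternative I would apply the theorem that narrows any nucleus with $t(\bot)\approx\bot$ to $t\in\{x,s_m,t_m,\neg\neg x\}$ and then the corollary characterizing when $s_m,t_m$ are nuclei: as both $\mathbf{S}_{1,1,\omega}$ and $\mathbf{S}_{1,\omega,1}$ lie in $\mathcal{V}$, neither $s_m$ nor $t_m$ is a nucleus, so $t$ is forced to be $x$ or $\neg\neg x$.

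For parts (ii)--(iv) the argument is uniform, and here $\mathbf{S}_{1,1},\mathbf{S}_{1,\omega}\in\mathcal{V}$ while $\mathbf{S}_{1,_1^\omega}\notin\mathcal{V}$. By Proposition~\ref{propertyT_equivalence} property (T) fails, and since $\mathbf{S}_{1,1}\in\mathcal{V}$ gives $P_{\mathcal{V}}\neq\emptyset$, Corollary~\ref{finite_M} makes $m=\bigvee P_{\mathcal{V}}$ a finite number with $a^m=a^{m+1}$ on every bounded component above the first; this is what makes $s_m,t_m$ meaningful. The four-candidate theorem gives $t\in\{x,s_m,t_m,\neg\neg x\}$, and nontriviality of $t$ discards $x$ and $\neg\neg x$, leaving $t\in\{s_m,t_m\}$. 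The corollary then selects the survivor: in (ii) only $s_m$ is a nucleus (since $\mathbf{S}_{1,\omega,1}\notin\mathcal{V}$ but $\mathbf{S}_{1,1,\omega}\in\mathcal{V}$), in (iii) only $t_m$, and in (iv) both. For the converse direction I must confirm the survivor is genuinely nontrivial by exhibiting $\mathbf{A},\mathbf{B}\in\mathcal{V}$ with $a<t^\mathbf{A}(a)$ and $t^\mathbf{B}(b)<\neg\neg b$; reading the value tables of Theorem~\ref{s_m_t_m_characterization} on $\mathbf{S}_{1,\omega}$ and $\mathbf{S}_{1,1}$ (both in $\mathcal{V}$ throughout (ii)--(iv)) supplies these, since $s_m$ takes the value $\top$ strictly above its argument on the cancellative component of $\mathbf{S}_{1,\omega}$ yet equals the identity on the second component of $\mathbf{S}_{1,1}$, hence strictly below $\neg\neg x$ there, and symmetrically for $t_m$.

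The main difficulty I anticipate is organizational rather than computational: checking that the five-way split is exhaustive and disjoint and, in each branch, invoking the correct earlier result while keeping straight which of $s_m$ and $t_m$ survives as a nucleus. The only step requiring real content beyond citation is the converse in (ii)--(iv), where the strict inequalities witnessing nontriviality must be checked against the explicit piecewise descriptions of $s_m^\mathbf{A}$ and $t_m^\mathbf{A}$.
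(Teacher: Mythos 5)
Your proposal is correct and follows essentially the same route as the paper: the theorem there is stated as a summary of the preceding results, and your assembly --- reduction to $t(\bot)\approx\bot$ via Proposition~\ref{nuclei_term_interval}, the subalgebra containments giving exhaustiveness and disjointness of the case split, the two ``simple cases'' propositions, the four-candidate theorem under $\mathbf{S}_{1,1},\mathbf{S}_{1,\omega}\in\mathcal{V}$, $\mathbf{S}_{1,_1^\omega}\notin\mathcal{V}$, and the corollary selecting which of $s_m,t_m$ survives --- is exactly the paper's chain of reasoning. The one point you make explicit that the paper leaves implicit, namely verifying nontriviality of the surviving term by evaluating $s_m$ and $t_m$ on $\mathbf{S}_{1,\omega}$ and $\mathbf{S}_{1,1}$, is handled correctly.
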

	
	To end this section let us make some remarks about the previous theorem. Before continuing we recall a well-known result about nuclei. A proof of the following theorem may be found in \cite{ZHAO2013}.
	\begin{theorem}\label{nuclei_glivenko}
		Let $\mathbf{A}$ be a residuated lattice and $\gamma$ a nucleus on $\mathbf{A}$. The following are equivalent:
		\begin{enumerate}[label=\rm{(\roman*)}]
			\item $\gamma(\gamma(a) \imp a) = \top$ for all $a \in A$;
			\item $\gamma$ is a homomorphism from $\mathbf{A}$ onto $\mathbf{A}_\gamma$, and in this case, $\mathbf{A} \slash D_\gamma(A) \cong \mathbf{A}_\gamma$.
		\end{enumerate}
	\end{theorem}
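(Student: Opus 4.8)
The plan is to dispatch the easy direction first and then concentrate on the substantial one. For (ii)$\Rightarrow$(i): if $\gamma$ is a homomorphism onto $\mathbf{A}_\gamma$, then in particular it preserves $\imp$, so for every $a \in A$ we get
\begin{equation*}
	\gamma(\gamma(a) \imp a) = \gamma(\gamma(a)) \imp \gamma(a) = \gamma(a) \imp \gamma(a) = \top,
\end{equation*}
which is exactly (i). All the work therefore lies in the implication (i)$\Rightarrow$(ii).

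For (i)$\Rightarrow$(ii), I would first record what comes for free. By Lemma~\ref{nuclei_properties1} the map $\gamma \colon \mathbf{A} \to \mathbf{A}_\gamma$ already preserves $\lor$, $\cdot$ and $\top$ and sends $\bot$ to $\gamma(\bot) = \min \gamma(A)$ (items (ii), (iv), (iii)), precisely because the operations $\lor_\gamma$, $\cdot_\gamma$ of $\mathbf{A}_\gamma$ are \emph{defined} as $\gamma(\gamma(a) \lor \gamma(b))$ and $\gamma(\gamma(a) \cdot \gamma(b))$. Hence $\gamma$ is a homomorphism as soon as it preserves $\imp$ and $\land$, and this is exactly where the hypothesis (i)---the $\gamma$-density of every element $\gamma(a) \imp a$---will be used.

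For $\imp$: by equation~\eqref{nuclei_equation} and Lemma~\ref{nuclei_properties1}(vi), the element $v := \gamma(a) \imp \gamma(b) = a \imp \gamma(b)$ is a fixpoint of $\gamma$, and Lemma~\ref{nuclei_properties1}(v) already gives $\gamma(a \imp b) \leq v$. For the reverse inequality I would invoke the residuation chain law $(a \imp \gamma(b)) \cdot (\gamma(b) \imp b) \leq a \imp b$ (a routine consequence of Proposition~\ref{residuated_lattices_properties}), apply $\gamma$, and then combine multiplicativity with density:
\begin{equation*}
	v = \gamma(v) \cdot \gamma(\gamma(b) \imp b) \leq \gamma\bigl(v \cdot (\gamma(b) \imp b)\bigr) \leq \gamma(a \imp b),
\end{equation*}
where $\gamma(\gamma(b) \imp b) = \top$ by (i). This yields $\gamma(a \imp b) = \gamma(a) \imp \gamma(b)$. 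The $\land$ case follows the same template: set $w := \gamma(a) \land \gamma(b)$, which is again a fixpoint, verify $w \cdot (\gamma(a) \imp a) \cdot (\gamma(b) \imp b) \leq a \land b$ using Proposition~\ref{residuated_lattices_properties}(vii), and squeeze $w = \gamma(w) \cdot \top \cdot \top \leq \gamma(a \land b)$; together with the trivial inequality $\gamma(a \land b) \leq \gamma(a) \land \gamma(b)$ this gives preservation of $\land$.

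Once $\gamma$ is established as a surjective homomorphism onto $\mathbf{A}_\gamma$, the final clause is formal: the kernel of $\gamma$ is a congruence whose associated implicative filter is $\lbrace a : \gamma(a) = \gamma(\top) = \top \rbrace = D_\gamma(A)$, so by the correspondence between implicative filters and congruences together with the homomorphism theorem we obtain $\mathbf{A} \slash D_\gamma(A) \cong \mathbf{A}_\gamma$. The main obstacle is precisely the pair of reverse inequalities for $\imp$ and for $\land$: each one forces a residuation inequality through $\gamma$ using the multiplicativity $\gamma(x)\gamma(y) \leq \gamma(xy)$, and crucially relies on the density of $\gamma(b) \imp b$ and $\gamma(a) \imp a$ supplied by (i)---without that hypothesis these inequalities genuinely fail, which is what makes (i) the right characterizing condition.
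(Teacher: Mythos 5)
Your proof is correct, and there is nothing in the paper to compare it with line by line: the paper does not prove Theorem~\ref{nuclei_glivenko} at all, but quotes it from the reference \cite{ZHAO2013}. Your argument is a complete, self-contained derivation from the paper's own toolkit, and every step checks out. The easy direction (ii)$\Rightarrow$(i) is right, using that $\imp$ in $\mathbf{A}_\gamma$ is by definition the restriction of $\imp$ in $\mathbf{A}$. In the hard direction your division of labor is exactly the right observation: preservation of $\lor$, $\cdot$, $\bot$, $\top$ holds for \emph{every} nucleus by Lemma~\ref{nuclei_properties1}(ii)--(iv) and the definitions of $\lor_\gamma$, $\cdot_\gamma$, so hypothesis (i) is needed only for $\imp$ and $\land$. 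For $\imp$, the element $v = \gamma(a)\imp\gamma(b) = a\imp\gamma(b)$ is indeed a fixpoint by \eqref{nuclei_equation} and Lemma~\ref{nuclei_properties1}(vi); the composition law $(a\imp\gamma(b))\cdot(\gamma(b)\imp b)\leq a\imp b$ follows from Proposition~\ref{residuated_lattices_properties}(vii) and residuation; and the squeeze $v=\gamma(v)\cdot\gamma(\gamma(b)\imp b)\leq\gamma(v\cdot(\gamma(b)\imp b))\leq\gamma(a\imp b)$ is valid, with density entering exactly where you say. The $\land$ case goes through the same way, where the inequality $w\cdot(\gamma(a)\imp a)\cdot(\gamma(b)\imp b)\leq a\land b$ uses integrality ($c\cdot d\leq c$), available since the paper works with bounded commutative integral residuated lattices, and Lemma~\ref{nuclei_properties1}(i) guarantees that $\gamma(a)\land\gamma(b)$ is a fixpoint, so $\land$ in $\mathbf{A}_\gamma$ really is the restricted meet. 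The only step you compress is the last one: the homomorphism theorem gives $\mathbf{A}/\ker\gamma\cong\mathbf{A}_\gamma$, and passing from $\ker\gamma$ to the filter $D_\gamma(A)$ requires the standard fact that the filter--congruence isomorphism mentioned in the preliminaries sends a congruence to its $\top$-class (so that congruences of residuated lattices are determined by their $\top$-class); this is well known and cited in the paper via \cite{GALATOS2007}, but it is the one ingredient you use without naming it precisely.
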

	
	The equivalent conditions presented in the previous theorem are known as the \emph{$\gamma$-relative Glivenko property} \cite{ZHAO2013}. This result shows that, in some cases, if $\mathbf{A} \in \mathcal{V}$ and $\gamma$ is a nucleus on $\mathbf{A}$, then $\mathbf{A}_\gamma \in \mathcal{V}$. This is not generally true (see \cite[Corollary~3.9]{CASTAÑO2015}). 
	
	Let $\mathcal{V}$ be an arbitrary subvariety of $\mathcal{BL}$ and $t(x)$ a nucleus on $\mathcal{V}$. Given $\mathbf{A} \in \mathcal{V}$, let $\mathbf{A}_t$ and $D_t(\mathbf{A})$ denote the nuclear image and the set of dense elements of $\mathbf{A}$ relative to the nucleus $t^\mathbf{A}$, respectively. If $\mathbf{A} \in \mathcal{V}_{\textnormal{fsi}}$, then
	\begin{equation*}
		\mathbf{A}_t =
		\begin{cases}
			\mathbf{1} & \text{if $t(x) = \top$}, \\
			\mathbf{A} & \text{if $t(x) = x$}, \\
			\mathbf{A}_0^+ & \text{if $t(x) = \neg\neg x$}, \\
			(\bigoplus_{i \in I_b} \mathbf{A}_i)^+ & \text{if $t(x) = s_m(x)$, $m \geq 1$}, \\
			(\bigoplus_{i \in \lbrace 0 \rbrace \cup I_u} \mathbf{A}_i)^+ & \text{if $t(x) = t_m(x)$, $m \geq 1$},
		\end{cases}
	\end{equation*}
	and 
	\begin{equation*}
		D_t(\mathbf{A}) =
		\begin{cases}
			A & \text{if $t(x) = \top$}, \\
			\lbrace \top \rbrace & \text{if $t(x) = x$}, \\
			\lbrack \bigcup_{i \in I \backslash \lbrace 0 \rbrace} A_i \rbrack \cup \lbrace \top \rbrace & \text{if $t(x) = \neg\neg x$}, \\
			\lbrack \bigcup_{i \in I_u} A_i \rbrack \cup \lbrace \top \rbrace & \text{if $t(x) = s_m(x)$, $m \geq 1$}, \\
			\lbrack \bigcup_{i \in I_b \backslash \lbrace 0 \rbrace} A_i \rbrack \cup \lbrace \top \rbrace& \text{if $t(x) = t_m(x)$, $m \geq 1$}.
		\end{cases}
	\end{equation*}
	
	Notice that, as an immediate consequence of Lemma~\ref{A_1_1_A_1_omega_nuclei} and Lemma~\ref{A_1_1_A_1_omega_I_b_I_u},
	\begin{equation*}
		\ensuremath{\left. t^\mathbf{A} \right|_{A_i}} = \ensuremath{\left. x^\mathbf{A} \right|_{A_i}} \quad \text{or} \quad \ensuremath{\left. t^\mathbf{A} \right|_{A_i}} = \ensuremath{\left. \top^\mathbf{A} \right|_{A_i}}
	\end{equation*} 
	for all $i \in I$. That being the case, it can be easily shown that $\mathbf{A}$ satisfies the $t^\mathbf{A}$-relative Glivenko property, that is, the identity $t(t(x) \imp x) \approx \top$. Since $\mathbf{A}$ is an arbitrary member of $\mathcal{V}_{\textnormal{fsi}}$ we conclude that
	\begin{equation}\label{nuclei_glivenko_identity}
		\mathcal{V} \vDash t(t(x) \imp x) \approx \top.
	\end{equation}
	
	Hence, $t^\mathbf{A}$ is an homomorphism from $\mathbf{A}$ onto $\mathbf{A}_t$ and the quotient $\mathbf{A} \slash D_t(\mathbf{A})$ is isomorphic to $\mathbf{A}_t$ for all $\mathbf{A}$ in $\mathcal{V}$. 
	
	Now suppose that $\mathcal{V} \vDash t(\bot) \approx \bot$ and let $\mathbf{A} \in \mathcal{V}_{\textnormal{fsi}}$. Notice that, in this case, $\mathbf{A}_t$ is a subalgebra of $\mathbf{A}$. That is equivalent to saying that $\mathbf{A}$ satisfies the identities $t(\bot) \approx \bot$, $t(t(x) \cdot t(y)) \approx t(x) \cdot t(y)$, $t(t(x) \imp t(y)) \approx t(x) \imp t(y)$ and $t(\top) \approx \top$.	Since $\mathbf{A} \in \mathcal{V}_{\textnormal{fsi}}$ is arbitrary, $\mathcal{V}$ also satisfies the same identities. Therefore, $\mathbf{A}_t$ is a subalgebra of $\mathbf{A}$ for all $\mathbf{A}$ in $\mathcal{V}$. If $\mathcal{V} \vDash t(\bot) \approx \bot$, then $\mathcal{V} \vDash (x \imp t(x)) \land (t(x) \imp \neg\neg x) \approx \top$ (see Lemma~\ref{nuclei_term_interval}). Therefore, for all $\mathbf{A} \in \mathcal{V}$ and $a \in A$, 
	\begin{equation*}
		\begin{aligned}
			a & = a \land t(a) \\
			& = t(a) \cdot (t(a) \imp a) \\
			& = (t(a) \land \neg\neg a) \cdot (t(a) \imp a) \\
			& = \neg\neg a \cdot (\neg\neg a \imp t(a)) \cdot (t(a) \imp a).
		\end{aligned}
	\end{equation*}
	
	Moreover, $\neg\neg a \in A_{\neg\neg x}$, $\neg\neg a \imp t(a) \in D_{\neg\neg x}(A) \cap A_t$ and $t(a) \imp a \in D_t(A)$. We have shown that
	\begin{equation}\label{BL_nuclei_decomposition}
		\mathcal{V} \vDash x \approx \neg\neg x \cdot (\neg\neg x \imp t(x)) \cdot (t(x) \imp x)
	\end{equation}
	
	This generalizes the decomposition given in \cite{CIGNOLI2003}: 
	\begin{equation*}
		\mathcal{V} \vDash x \approx \neg\neg x \cdot (\neg\neg x \imp x).
	\end{equation*} 
	
	The identity \eqref{BL_nuclei_decomposition} is interesting when we consider nontrivial nuclei, that is, when $t(x) = s_m(x)$ or $t(x) = t_m(x)$ for some $m \in \omega$, $m \geq 1$. Starting now, suppose that $\mathbf{S}_{1,1} \in \mathcal{V}$ and $\mathbf{S}_{1,_1^\omega} \notin \mathcal{V}$. Taking into consideration Corollary~\ref{finite_M}, if $m = \bigvee P_{\mathcal{V}}$, then
	\begin{equation*}
		s_m(a) = 
		\begin{cases}
			a & \text{if $a \in A_0$;} \\
			a & \text{if $a \in A_i$, $i \in I_b \backslash \lbrace 0 \rbrace$;} \\
			\top & \text{if $a \in A_i$, $i \in I_u$;}
		\end{cases}
	\end{equation*}
	and
	\begin{equation*}
		t_m(a) = 
		\begin{cases}
			a & \text{if $a \in A_0$;} \\
			a & \text{if $a \in A_i$, $i \in I_u$;} \\
			\top & \text{if $a \in A_i$, $i \in I_b \backslash \lbrace 0 \rbrace$;} \\
		\end{cases}
	\end{equation*}
	for all $\mathbf{A} \in \mathcal{V}_{\textnormal{fsi}}$. As a consequence, it can be easily shown that
	\begin{equation*}
		\mathcal{V} \vDash s_m(x) \approx \neg\neg x \land (t_m(x) \imp x) \quad \text{and} \quad \mathcal{V} \vDash t_m(x) \approx \neg\neg x \land (s_m(x) \imp x).
	\end{equation*}
	
	Moreover,
	\begin{equation*}
		\mathcal{V} \vDash s_m(x) \land t_m(x) \approx x \quad \text{and} \quad \mathcal{V} \vDash s_m(x) \lor t_m(x) \approx \neg\neg x.
	\end{equation*}
	
	A well-known result states that, if $\mathbf{B}$ is a totally ordered Wajsberg hoop that satisfies the identity $x^m \approx x^{m+1}$ for some $m \in \omega$, then $\mathbf{B} \cong \mathbf{S}_n$ for some $n \leq m$ (see \cite{CORNISH1980}). Therefore, if $\mathbf{A} \in \mathcal{V}_{\textnormal{fsi}}$, then $\mathbf{A} \cong (\bigoplus_{i \in I} \mathbf{A}_i)^+$ for some totally ordered set $\langle I, \leq \rangle$ with first element $0$ and a set of totally ordered Wajsberg hoops $\lbrace \mathbf{A}_i: i \in I \rbrace$ such that $\mathbf{A}_0$ has first element $\bot^\mathbf{A}$, and, for all $i \in I \backslash \lbrace 0 \rbrace$, $\mathbf{A}_i = \mathbf{S}_{n}$ for some $n \leq m$ or $\mathbf{A}_i$ is unbounded. Notice that all properties mentioned until now hold whether $s_m(x)$ (or $t_m(x)$) is a nucleus on $\mathcal{V}$ or not. If it is the case that $s_m(x)$ is a nucleus on $\mathcal{V}$, then $i < j$ for all $i \in I_b \backslash \lbrace 0 \rbrace$, $j \in I_u$. Analogously, if $t_m(x)$ is a nucleus on $\mathcal{V}$, then $i < j$ for all $i \in I_u$, $j \in I_b \backslash \lbrace 0 \rbrace$. If both $s_m(x)$ and $t_m(x)$ are nuclei on $\mathcal{V}$, then $I = \lbrace 0 \rbrace \cup I_u$ or $I = I_b$.
	
	We end this section with a simple application of the identity \eqref{nuclei_glivenko_identity}. The variety $\mathcal{BL}$ is the algebraic counterpart of the \emph{Basic Fuzzy Logic}, BL for short. Moreover, axiomatic extensions of BL are in correspondence with subvarieties of $\mathcal{BL}$. By a propositional formula we understand a formula built from the variables $x_1, \dots,x_n$ with the connectives $\&$ (conjunction), $\Rightarrow$ (implication) and the constant $0$. Given a BL-algebra $\mathbf{A}$ and $a_1, \dots,a_n \in A$, $\varphi^\mathbf{A}(a_1, \dots,a_n)$ is the element of $A$ obtained by replacing in $\varphi$ the connectives $\&$, $\Rightarrow$ and $0$ by the operations $\cdot, \imp$ and $\bot$, respectively, and each $x_i$ by $a_i$ for $i = 1, \dots,n$. If $\mathcal{V}$ is a subvariety of $\mathcal{BL}$ and $\text{BL}_\mathcal{V}$ is its corresponding extension, then a propositional formula $\varphi(x_1, \dots,x_n)$ is derivable in $\text{BL}_\mathcal{V}$ if and only if, for each $\mathbf{A} \in \mathcal{V}$ and $a_1, \dots,a_n \in A$, $\varphi^\mathbf{A}(a_1, \dots,a_n) = \top$. For more about BL-algebras and the logic BL we refer to \cite{HAJEK1998}. 
	
	Consider $\mathcal{V}$ a subvariety of $\mathcal{BL}$, $t(x)$ a nucleus on $\mathcal{V}$ and $\varphi(x_1, \dots,x_n)$ a propositional formula. The following result is a direct consequence of the identities $t(t(x)) \approx t(x)$ and $t(t(x) \imp x) \approx \top$ (see Theorem~\ref{nuclei_glivenko}).
	\begin{proposition}\label{nucleus_homomorphism}
		If $\mathbf{A} \in \mathcal{V}$ and $a_1, \dots,a_n \in A$, then
		\begin{equation*}
			(t \circ \varphi)^\mathbf{A}(a_1, \dots,a_n) = t^\mathbf{A}(\varphi^\mathbf{A}(a_1, \dots,a_n)) = t^\mathbf{A}(\varphi^\mathbf{A}(t^\mathbf{A}(a_1), \dots,t^\mathbf{A}(a_n)))
		\end{equation*}
	\end{proposition}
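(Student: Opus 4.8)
The plan is to deduce both equalities from the fact that, under the standing hypotheses, $t^\mathbf{A}$ is a homomorphism from $\mathbf{A}$ onto its nuclear image $\mathbf{A}_t$. The first equality is merely definitional: the composite term $t \circ \varphi$ denotes $t(\varphi(x_1, \dots, x_n))$, so its interpretation at $(a_1, \dots, a_n)$ is by construction $t^\mathbf{A}(\varphi^\mathbf{A}(a_1, \dots, a_n))$. The entire content of the proposition therefore lies in the second equality.

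First I would recall that, by \eqref{nuclei_glivenko_identity}, every nucleus $t$ on a subvariety $\mathcal{V}$ of $\mathcal{BL}$ satisfies $\mathcal{V} \vDash t(t(x) \imp x) \approx \top$; in particular $t^\mathbf{A}(t^\mathbf{A}(a) \imp a) = \top$ for every $\mathbf{A} \in \mathcal{V}$ and every $a \in A$. This is precisely condition (i) of Theorem~\ref{nuclei_glivenko} applied to the nucleus $\gamma = t^\mathbf{A}$, so condition (ii) yields that $t^\mathbf{A} \colon \mathbf{A} \to \mathbf{A}_t$ is a surjective homomorphism. Since a homomorphism commutes with every term operation --- a routine induction on the structure of $\varphi$, using that $\varphi$ is built only from the connectives $\&$, $\Rightarrow$, $0$, interpreted as the basic operations $\cdot$, $\imp$ and the constant $\bot$, all of which $t^\mathbf{A}$ preserves into $\mathbf{A}_t$ --- we obtain, for every tuple $(b_1, \dots, b_n) \in A^n$,
\begin{equation*}
	t^\mathbf{A}(\varphi^\mathbf{A}(b_1, \dots, b_n)) = \varphi^{\mathbf{A}_t}(t^\mathbf{A}(b_1), \dots, t^\mathbf{A}(b_n)),
\end{equation*}
where the right-hand side is evaluated using the operations of $\mathbf{A}_t$.

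Applying this identity to the tuple $(a_1, \dots, a_n)$ and, separately, to the tuple $(t^\mathbf{A}(a_1), \dots, t^\mathbf{A}(a_n))$, and then invoking idempotence $t^\mathbf{A}(t^\mathbf{A}(a_i)) = t^\mathbf{A}(a_i)$ (valid because $t^\mathbf{A}$ is a closure operator), both $t^\mathbf{A}(\varphi^\mathbf{A}(a_1, \dots, a_n))$ and $t^\mathbf{A}(\varphi^\mathbf{A}(t^\mathbf{A}(a_1), \dots, t^\mathbf{A}(a_n)))$ reduce to the common value $\varphi^{\mathbf{A}_t}(t^\mathbf{A}(a_1), \dots, t^\mathbf{A}(a_n))$, which establishes the second equality. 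I do not expect any serious obstacle: once the homomorphism property is secured from Theorem~\ref{nuclei_glivenko}, the argument is a direct application of the universal-algebraic fact that homomorphisms respect term operations, combined with idempotence. The only point demanding a little care is checking that the formula $\varphi$, written in the connectives $\lbrace \&, \Rightarrow, 0 \rbrace$, is genuinely transported by $t^\mathbf{A}$ into $\mathbf{A}_t$; this is immediate, since these connectives interpret as $\cdot$, $\imp$ and $\bot$, and the image of $\bot^\mathbf{A}$ is the least element $t^\mathbf{A}(\bot)$ of $\mathbf{A}_t$.
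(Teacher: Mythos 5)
Your proposal is correct and matches the paper's argument: the paper states the proposition as a direct consequence of the identities $t(t(x)) \approx t(x)$ and $t(t(x) \imp x) \approx \top$ (i.e., identity \eqref{nuclei_glivenko_identity}), which via Theorem~\ref{nuclei_glivenko} make $t^\mathbf{A}$ a surjective homomorphism onto $\mathbf{A}_t$, exactly as you argue. Your write-up merely makes explicit the routine induction showing homomorphisms commute with the term $\varphi$, which the paper leaves implicit.
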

	
	Let $\text{BL}_\mathcal{V}$ be the extension (of BL) corresponding to $\mathcal{V}$ and let $\text{BL}_\mathcal{U}$ be the extension corresponding to the variety $\mathcal{U}$ defined, relative to $\mathcal{V}$, by the identity $t(x) \approx x$. Notice that, as another consequence of the Glivenko property, if $\mathbf{A} \in \mathcal{V}$, then $\mathbf{A}_t \in \mathcal{U}$. The following result is a generalization of \cite[Theorem 2.1]{CIGNOLI2003}.
	\begin{theorem}\label{nucleus_generalized_Glivenko}
		Let $\varphi$ be a propositional formula. Then $\varphi$ is derivable in $\text{BL}_\mathcal{U}$ if and only if $t \circ \varphi$ is derivable in $\text{BL}_\mathcal{V}$.
	\end{theorem}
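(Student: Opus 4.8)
The plan is to reduce both derivability statements to their algebraic meaning: a formula $\psi$ is derivable in $\mathrm{BL}_{\mathcal{W}}$ precisely when $\psi^{\mathbf{C}}(\bar c) = \top$ for every $\mathbf{C} \in \mathcal{W}$ and every tuple $\bar c$ of elements of $C$. Thus I must show that $\varphi$ is identically $\top$ on every algebra of $\mathcal{U}$ if and only if $t \circ \varphi$ is identically $\top$ on every algebra of $\mathcal{V}$. The two ingredients I intend to use are Proposition~\ref{nucleus_homomorphism} together with the $t$-relative Glivenko property recorded in \eqref{nuclei_glivenko_identity}, which by Theorem~\ref{nuclei_glivenko} makes $t^{\mathbf{A}}$ a surjective homomorphism $\mathbf{A} \to \mathbf{A}_t$, and the observation noted just before the statement that $\mathbf{A}_t \in \mathcal{U}$ whenever $\mathbf{A} \in \mathcal{V}$.

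For the forward implication I assume $\varphi$ is derivable in $\mathrm{BL}_{\mathcal{U}}$ and fix $\mathbf{A} \in \mathcal{V}$ and a tuple $\bar a$. Since $t^{\mathbf{A}} \colon \mathbf{A} \to \mathbf{A}_t$ is a homomorphism, evaluating it on the term $\varphi$ gives $t^{\mathbf{A}}(\varphi^{\mathbf{A}}(\bar a)) = \varphi^{\mathbf{A}_t}(t^{\mathbf{A}}(a_1), \dots, t^{\mathbf{A}}(a_n))$, so by Proposition~\ref{nucleus_homomorphism} we obtain $(t \circ \varphi)^{\mathbf{A}}(\bar a) = \varphi^{\mathbf{A}_t}(t^{\mathbf{A}}(a_1), \dots, t^{\mathbf{A}}(a_n))$. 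Because $\mathbf{A}_t \in \mathcal{U}$ and each $t^{\mathbf{A}}(a_i) \in A_t$, the derivability of $\varphi$ in $\mathrm{BL}_{\mathcal{U}}$ forces the right-hand side to equal $\top$. As $\mathbf{A}$ and $\bar a$ were arbitrary, $t \circ \varphi$ is derivable in $\mathrm{BL}_{\mathcal{V}}$.

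For the converse I assume $t \circ \varphi$ is derivable in $\mathrm{BL}_{\mathcal{V}}$ and take any $\mathbf{B} \in \mathcal{U}$ with a tuple $\bar b$. Since $\mathcal{U}$ is defined relative to $\mathcal{V}$ by the identity $t(x) \approx x$, the map $t^{\mathbf{B}}$ is the identity on $B$; in particular $t^{\mathbf{B}}(\varphi^{\mathbf{B}}(\bar b)) = \varphi^{\mathbf{B}}(\bar b)$. As $\mathbf{B} \in \mathcal{U} \subseteq \mathcal{V}$, the derivability of $t \circ \varphi$ yields $(t \circ \varphi)^{\mathbf{B}}(\bar b) = t^{\mathbf{B}}(\varphi^{\mathbf{B}}(\bar b)) = \top$, whence $\varphi^{\mathbf{B}}(\bar b) = \top$. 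Since $\mathbf{B}$ and $\bar b$ were arbitrary, $\varphi$ is derivable in $\mathrm{BL}_{\mathcal{U}}$.

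I expect the only genuinely delicate point to be the change of algebra in the forward direction: one must evaluate $\varphi$ in the nuclear image $\mathbf{A}_t$ rather than in $\mathbf{A}$, and this is exactly what the homomorphism property of $t^{\mathbf{A}}$ (hence the Glivenko identity \eqref{nuclei_glivenko_identity}) provides, while the membership $\mathbf{A}_t \in \mathcal{U}$ guarantees that $\varphi$ is available as a theorem over $\mathbf{A}_t$. Everything else is a routine unwinding of the semantic definition of derivability, so no substantial computation should be required.
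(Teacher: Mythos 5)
Your proof is correct and follows essentially the same route as the paper: both directions rest on the same three ingredients, namely the Glivenko identity \eqref{nuclei_glivenko_identity} making $t^{\mathbf{A}}$ a homomorphism onto $\mathbf{A}_t$, the membership $\mathbf{A}_t \in \mathcal{U}$, and Proposition~\ref{nucleus_homomorphism}, with the converse handled identically via $\mathcal{U} \subseteq \mathcal{V}$ and the defining identity $t(x) \approx x$. The only cosmetic difference is that you transport the evaluation of $\varphi$ into $\mathbf{A}_t$ via the homomorphism property, whereas the paper evaluates $\varphi^{\mathbf{A}}$ at the images $t^{\mathbf{A}}(a_i)$ inside $\mathbf{A}$ and then applies $t^{\mathbf{A}}$ using the second equality of Proposition~\ref{nucleus_homomorphism}; if anything, your variant makes the change of algebra slightly more explicit.
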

	\begin{proof}
		Suppose first that $\varphi$ is derivable in $\text{BL}_\mathcal{U}$. If $\mathbf{A} \in \mathcal{V}$ and $(a_1, \dots,a_n) \in A^n$, then $\mathbf{A}_t \in \mathcal{U}$ and $(t^\mathbf{A}(a_1), \dots,t^\mathbf{A}(a_n)) \in A_t^n$. Using the hypothesis be obtain that $\varphi^\mathbf{A}(t^\mathbf{A}(a_1), \dots,t^\mathbf{A}(a_n)) = \top$. Particularly, 
		\begin{equation*}
			t^\mathbf{A}(\varphi^\mathbf{A}(t^\mathbf{A}(a_1), \dots,t^\mathbf{A}(a_n))) = \top.
		\end{equation*}
		
		Therefore, $(t \circ \varphi)^\mathbf{A}(a_1, \dots,a_n) = \top$. This shows that $t \circ \varphi$ is derivable in $\text{BL}_\mathcal{V}$. Suppose now that $t \circ \varphi$ is derivable in  $\text{BL}_\mathcal{V}$ and let $\mathbf{A} \in \mathcal{U}$ and $(a_1, \dots,a_n) \in A^n$. Since $\mathcal{U} \subseteq \mathcal{V}$, $t^\mathbf{A}(\varphi^\mathbf{A}(a_1, \dots,a_n)) = \top$. By definition of $\mathcal{U}$ we conclude that $\varphi^\mathbf{A}(a_1, \dots,a_n) = \top$ and $\varphi$ is derivable in $\text{BL}_{\mathcal{U}}$. 
	\end{proof}

	\newpage
	
	Sebasti\'an Buss
	
	\noindent Instituto de Matem\'atica (INMABB), Departamento de Matem\'atica, Universidad Nacional del Sur (UNS)-CONICET, Bah\'ia Blanca, Argentina.
	
	\noindent sbuss94@gmail.com
	
	\
	
	Diego Casta\~no
	
	\noindent Departamento de Matem\'atica, Universidad Nacional del Sur (UNS), Bah\'ia Blanca, Argentina. \\
	Instituto de Matem\'atica (INMABB), Departamento de Matem\'atica, Universidad Nacional del Sur (UNS)-CONICET, Bah\'ia Blanca, Argentina.
	
	\noindent diego.castano@uns.edu.ar
	
	\
	
	Jos\'e Patricio D\'iaz Varela
	
	\noindent Departamento de Matem\'atica, Universidad Nacional del Sur (UNS), Bah\'ia Blanca, Argentina. \\
	Instituto de Matem\'atica (INMABB), Departamento de Matem\'atica, Universidad Nacional del Sur (UNS)-CONICET, Bah\'ia Blanca, Argentina.
	
	\noindent usdiavar@criba.edu.ar
	
\end{document}